\newcommand{\R}{{\mathbb R}}
\newcommand{\C}{{\mathbb C}}
\renewcommand{\geq }{\geqslant}
\renewcommand{\leq }{\leqslant}
\numberwithin{equation}{section}
\newtheorem{theorem}{Theorem}[section]
\newtheorem{corollary}[theorem]{Corollary}
\newtheorem{lemma}[theorem]{Lemma}
\newtheorem{proposition}[theorem]{Proposition}
\theoremstyle{definition} \newtheorem{definition}[theorem]{Definition}
\newtheorem{example}[theorem]{Example}
\newtheorem{remark}[theorem]{Remark}
\begin{document}
\title[Hardy uncertainty principle in magnetic fields]{Hardy uncertainty principle and unique continuation properties of covariant Schr\"odinger flows}


\author{J.A. Barcel\'o}
\address{Juan Antonio Barcel\'o: ETSI de Caminos, Universidad Polit\'ecnica de Madrid, 28040 Madrid, Spain
}
\email{juanantonio.barcelo@upm.es}

\author{L. Fanelli}
\address{Luca Fanelli: SAPIENZA Universit$\grave{\text{a}}$ di Roma, Dipartimento di Matematica, P.le A. Moro 5, 00185-Roma, Italy}
\email{fanelli@mat.uniroma1.it}

\author{S. Guti\'errez}
\address{Susana Guti\'errez: School of Mathematics, The Watson Building, University of Birmingham, Edgbaston, Birmingham, B15 2TT, England}
\email{S.Gutierrez@bham.ac.uk}

\author{A. Ruiz}
\address{Alberto Ruiz: Departamento de Matem\'aticas, Universidad Aut\'onoma de Madrid, 28049,
Madrid, Spain}
\email{alberto.ruiz@uam.es}

\author{M.C. Vilela}
\address{Mari Cruz Vilela: Universidad de Valladolid, Departamento de Matem\'atica Aplicada, E.U. de Inform\'atica,  Plza. de Santa Eulalia 9 y 11, 40005 Segovia, Spain.}
\email{maricruz@eii.uva.es}

\subjclass[2000]{35J10, 35L05.}
\keywords{Schr\"odinger equation, electromagnetic potentials, Carleman estimates, uncertainty principle}

\thanks{
The first and fourth authors were supported by the Spanish projects MTM2008-02568 and MTM2011-28198,
the third by the Spanish projects MTM2007-62186 and MTM2011- 24054,
and finally, the fifth by Spanish projects MTM2007-62186 and MTM2011-28198.}

\begin{abstract}
We prove a logarithmic convexity result for exponentially weighted $L^2$-norms of
solutions to electromagnetic Schr\"odinger equation, without needing to assume
smallness of the magnetic potential. As a consequence, we can prove a unique
continuation result in the style of the Hardy uncertainty principle, which
generalize the analogous theorems which have been recently proved by Escauriaza, Kenig, Ponce and Vega.
\end{abstract}

\date{\today}
\maketitle

\section{Introduction}\label{sec:introduction}

This paper is concerned with sharp decay profiles, at two distinct times, of
$L^2$-solutions to an electromagnetic Schr\"odinger equation of the type
\begin{equation}\label{eq:main}
\partial_tu=i\left(\Delta_A +V\right)u,
\end{equation}
where $u=u(x,t):\R^{n+1}\to\C$, $A=A(x):\R^n\to\R^n$, $V=V(x,t):\R^n\to\C$, and  we use the notations
\begin{equation*}
\nabla_A:=\nabla-iA,
\qquad
\Delta_A:=(\nabla-iA)^2.
\end{equation*}
Our main goal is to start with a project devoted to understand
sufficient conditions on solutions to \eqref{eq:main}, the
coefficients $A,V$, and the behavior of the solutions at two
different times which ensure the rigidity $u\equiv0$. This follows a
program which has been developed for the magnetic free case $A\equiv
0$ by Escauriaza, Kenig, Ponce and Vega in the last few years in the
sequel of papers \cite{EKPV0, EKPV1, EKPV2, EKPV3, EKPV4}, and more
recently with Cowling in \cite{CEKPV}. Their main motivations is the
connection between unique continuation properties of Schr\"odinger
evolutions and the so called {\it Hardy uncertainty principle} (see
e.g. \cite{SS}), which can be briefly stated as follows:

{\it If $f(x)=O\left(e^{-|x|^2/\beta^2}\right)$ and its Fourier transform
$\hat f(\xi)=O\left(e^{-4|\xi|^2/\alpha^2}\right)$, then}
\begin{align*}
  \alpha\beta<4
  &
  \Rightarrow
  f\equiv0
  \\
  \alpha\beta=4
  &
  \Rightarrow
  f\ is\ a \ constant\ multiple\ of\ e^{-\frac{|x|^2}{\beta^2}}.
\end{align*}
Due to the strict connection between the Fourier transform $\mathcal F$ and the solution to the free Schr\"odinger equation with initial datum $f$ in $L^2$, namely
\begin{equation*}
  u(x,t):=e^{it\Delta}f(x) = (2\pi it)^{-\frac{n}{2}}e^{i\frac{|x|^2}{4t}}
  \mathcal F\left(e^{i\frac{|\cdot|^2}{4t}}f\right)\left(\frac{x}{2t}\right),
\end{equation*}
 the Hardy uncertainty principle has a PDE's-counterpart, which can be stated as follows:

{\it If $u(x,0)=O\left(e^{-|x|^2/\beta^2}\right)$ and $u(x,T):=e^{iT\Delta}u(x,0)=
O\left(e^{-|x|^2/\alpha^2}\right)$, then}
\begin{align*}
  \alpha\beta<4T
  &
  \Rightarrow
  u\equiv0
  \\
  \alpha\beta=4T
  &
  \Rightarrow
  u(x,0)\ is\ a \ constant\ multiple\ of\ e^{-\left(\frac{1}{\beta^2}+\frac{i}{4T}\right)|x|^2}.
\end{align*}
The corresponding $L^2$-versions of the previous results were proved in \cite{SST} and affirm the following:
\begin{align*}
  e^{|x|^2/\beta^2}f \in L^2,\  e^{4|\xi|^2/\alpha^2}\hat f \in L^2,\
  \alpha\beta\leq4
  &
  \Rightarrow
  f\equiv0
  \\
  e^{|x|^2/\beta^2}u(x,0)\in L^2,\  e^{|x|^2/\alpha^2}e^{iT\Delta}u(x,0) \in L^2,\
  \alpha\beta\leq4T
  &
  \Rightarrow
  u\equiv0.
\end{align*}
Obviously, without loss of generality, we might restrict our attention to the case $T=1$.
An interesting survey about this topic can be found in \cite{BD}.

One of the major contributions of the authors of \cite{CEKPV, EKPV0,
EKPV1, EKPV2, EKPV3, EKPV4} was to deeply understand the relation
between these kind of properties and the logarithmic convexity
property of exponentially weighted $L^2$-norms of solutions to
Schr\"odinger equations (see also \cite{EFV}, \cite{EKPV5} for
analogous results concerning unique continuation from the infinity).
This permits to perform purely real analytical proofs, and then
allows rough coefficients in the differential equations, which are
difficult to be handled by Fourier transform or general complex
analysis tools. For example, in \cite{EKPV2} and \cite{EKPV3} the
authors considered any bounded potential of the form
$V=V_1(x)+V_2(x,t)$, possibly being $V_2$ complex-valued, without
assuming any Sobolev regularity and any smallness condition on the
two components; in this situation, they were able to establish the
analog to the above statements, in the cases $\alpha\beta<2$ first
(\cite{EKPV2}), and the sharp $\alpha\beta<4$ later (\cite{EKPV3}),
for $T=1$. The strategy can be roughly summarized as follows:
\begin{itemize}
\item
Assume $e^{|x|^2/\beta^2}u(x,0),\
e^{|x|^2/\alpha^2}e^{i\Delta}u(x,1) \in L^2$ and prove a
logarithmic convexity estimate for the quantity
$H(t):=\|e^{g(x,t)}u(t)\|_{L^2}$ of the type $H(t)\lesssim
H(0)^tH(1)^{1-t}$, where $g$ is a suitable function, bounded
with respect to $t$ and quadratically growing with respect to
$x$. This shows that a gaussian decay at times 0 and 1 is
preserved (and in fact improved) for intermediate times.
\item
Start a self-improvement argument, by suitably moving the center of the gaussian as $e^{a(t)|x+b(t)|^2}$, based on analytical estimates (Carleman estimates; this leads up to the non-sharp result $\alpha\beta<2$, see \cite{EKPV2}) or on the logarithmic convexity itself (this leads to the sharp result $\alpha\beta<4$, see \cite{EKPV3}), which finally gives the rigidity $u\equiv0$.
\end{itemize}
Amazingly, proving these results in a rigorous way represents not just a considerable technical difficulty, but also a conceptual obstacle which, if avoided, could bring to misleading results. To overcome this problem, the above mentioned authors introduce a small artificial dissipation term in the equation which turns out to be fundamental, and finally let it tend to zero.

It is quite natural to claim that, with some efforts, some small
first-order terms can be introduced in the argument by Escauriaza,
Kenig, Ponce and Vega without loosing the results. The aim of this
paper is to understand in which way the uncertainty can be described
in the presence of first-order perturbations in covariant form, as
in \eqref{eq:main} when $A\neq0$. Precisely, our goal here is to
obtain similar results
without assuming any smallness conditions on $A$ and possibly
respecting the mathematical properties of the quantities which are
behind these kinds of models.

We continue to introduce the terminology and notation required to
state the main results of this paper.

Let $A=(A^1(x),\dots, A^n(x)):\R^n\to\R^n$, with $n\geq2$, be a
vector field, which we will usually refer to as a magnetic
potential, and denote the magnetic field by $B(x)=DA(x)-DA^t(x)$,
the anti-symmetric gradient of $A$, namely
\begin{equation*}
  B\in\mathcal M_{n\times n}(\R),
  \qquad
  B_{jk}(x):=A^k_j(x)-A^j_k(x).
\end{equation*}
From now on, given a scalar function $f$, we always use the notation $f_k(x)=\partial_{x_k}f(x)$, while an upper index will denote the component of a vector.
In dimension $n=2$, $B$ is identified with the scalar function $B\widetilde =\text{curl}\,A=A^1_2-A^2_1$; the same identification holds in dimension $n=3$, where now $\text{curl}\,A$ is a vector field and
\begin{equation*}
v^tB = v\times\text{curl}\,A
\qquad
\forall v\in\R^3,
\end{equation*}
the cross denoting the vectorial product in $\R^3$. Since equation
\eqref{eq:main} is gauge invariant (see Section
\ref{subsec:cronstrom} below), it is always important to keep in
mind that the physically meaningful quantity is the magnetic field
$B$, while the potential $A$ is a mathematical construction. This
fact has to be considered when we state a theorem, since a
meaningful result should not depend on a particular choice of the
gauge.

As it will be clear in the sequel, another relevant object is the
vector-field $\Psi(x):=x^tB(x)$; in 3D, it can be interpreted,
modulo its intensity, as a tangential projection of
$\text{curl}\,A$, since
\begin{equation*}
  x^tB(x)=x\times \text{curl}\,A(x) = |x|B_\tau(x),
  \qquad
  n=3
\end{equation*}
following the notation $B_\tau=\frac{x^t}{|x|}B$ introduced in \cite{FV}. As we see in \eqref{eq:assA3hardy} below, $x^tB$ is essentially the only component of $B$ on which one needs to make suitable assumptions, in order to obtain a Hardy uncertainty principle.

We can now state the main result of this paper.

\begin{theorem}\label{thm:hardy}
  Let $n\geq 3$, and let $u\in\mathcal C([0,1];L^2(\R^n))$
  be a solution to
  \begin{equation}\label{eq:main3hardy}
    \partial_tu
    =
    i\left(\Delta_A+V_1(x)+V_2(x,t)\right)u
  \end{equation}
  in $\R^n\times[0,1]$, with $A=A(x):\R^n\to\R^n$, $V_1=V_1(x):\R^n\to\R$, $V_2=V_2(x,t):\R^{n+1}\to\C$. Assume that
  \begin{equation}\label{eq:cronstrom13hardy}
    \int_0^1A(sx)\,ds\in\R^n
  \end{equation}
  is well defined at almost every $x\in\R^n$.
Moreover, denote by $B=B(x)=DA-DA^t$, $B_{jk}=A^k_j-A^j_k$ and assume that there exists a unit vector $v\in\mathcal S^{n-1}$ such that
  \begin{equation}\label{eq:ortoghardy}
  v^tB(x)\equiv0.
  \end{equation}¥
  In addition, assume that
  \begin{align}
    \label{eq:assA3hardy}
    &
    \|x^tB\|_{L^\infty}^2
    :=
    \frac{M_A}{4}<\infty
    \\
    \label{eq:assV3hardy}
    &
    \|V_1\|_{L^\infty}
    :=M_1<\infty
    \\
    \label{eq:assV23hardy}
    &
    \sup_{t\in[0,1]}\left\|
    e^{\frac{|\cdot|^2}{(\alpha t+\beta
    (1-t))^2}}V_2(\cdot,t)\right\|_{L^\infty}
    e^{\sup_{t\in[0,1]}\left\|\Im V_2(\cdot,t)\right\|
    _{L^\infty}}
    :=M_2<\infty
    \\
    \label{eq:decay3hardy}
    &
    \left\|e^{\frac{|\cdot|^2}{\beta^2}}
    u(\cdot,0)\right\|_{L^2}+
    \left\|e^{\frac{|\cdot|^2}{\alpha^2}}
    u(\cdot,1)\right\|_{L^2}<\infty,
  \end{align}
  for some $\alpha,\beta>0$. Then, $\alpha\beta<2$ implies $u\equiv0$.
\end{theorem}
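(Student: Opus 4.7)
The plan is to adapt the two-step Escauriaza-Kenig-Ponce-Vega strategy to the covariant setting, with the key magnetic input being a gauge reduction driven by \eqref{eq:cronstrom13hardy} and \eqref{eq:ortoghardy}. First, I would exploit the gauge invariance of \eqref{eq:main3hardy} together with the hypothesis $\int_0^1 A(sx)\,ds\in\R^n$ to pass to the Cronstr\"om (transverse) gauge, where
\[
\widetilde A(x)=\int_0^1 s\,B(sx)^{\!t} x\,ds,\qquad x\cdot\widetilde A\equiv 0.
\]
Using the antisymmetry of $B$ together with $v^tB\equiv 0$ (equivalently $Bv\equiv 0$), a direct computation gives $v\cdot\widetilde A\equiv 0$ as well. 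Thus, after the gauge transformation $u\mapsto e^{-i\phi}u$ with $\phi(x)=\int_0^1 x\cdot A(sx)\,ds$, the magnetic potential is orthogonal to both $x$ and to the privileged direction $v$, and every pointwise estimate on $\widetilde A$ that appears in the argument is controlled by $\|x^tB\|_{L^\infty}$ alone, so no smallness on $A$ is required.

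Second, I would prove a logarithmic convexity estimate of the form
\[
H(t)\,\le\, H(0)^{1-t}H(1)^{t}\,e^{N(M_A,M_1,M_2)},\qquad H(t):=\bigl\|e^{\varphi(\cdot,t)}u(\cdot,t)\bigr\|_{L^2(\R^n)},
\]
for an Appell-type quadratic weight $\varphi$ that interpolates the Gaussian profiles in \eqref{eq:decay3hardy} between $t=0$ and $t=1$. The scheme is the standard one: set $f=e^{\varphi}u$, compute $\partial_t f=\mathcal Sf+\mathcal A f$ with $\mathcal S=\mathcal S^{*}$ and $\mathcal A=-\mathcal A^{*}$, and show convexity of $\log H$ via the positivity of a quadratic form associated with $[\mathcal S,\mathcal A]$. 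The magnetic Laplacian $\Delta_{\widetilde A}$ produces extra commutator pieces that are of two types: pointwise ones involving $B$ and first-order ones of the shape $(x^tB)\cdot\nabla_{\widetilde A}$; both are absorbed using \eqref{eq:assA3hardy} (the kinetic part via the diamagnetic control of $\nabla_{\widetilde A}$ by $\nabla$). The scalar terms are handled exactly as in \cite{EKPV2} via \eqref{eq:assV3hardy}--\eqref{eq:assV23hardy}. Following \cite{EKPV2}, these manipulations are justified by first passing to the parabolic regularization $\partial_t u=(\e+i)(\Delta_{\widetilde A}+V)u$, deriving $\e$-uniform convexity bounds, and letting $\e\to 0^+$.

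Third, I would run the Carleman self-improvement step: starting from the propagation of Gaussian decay obtained above, one iterates with a translated weight $e^{a(t)|x+b(t)v|^{2}}$ whose center moves along the distinguished direction $v$. This translation is admissible precisely because $v\cdot\widetilde A\equiv 0$, so it does not create magnetic cross-terms that would otherwise be uncontrollable; pushing the Carleman inequality of \cite{EKPV2} into our regularized equation then forces $u\equiv 0$ whenever $\alpha\beta<2$, which is the non-sharp threshold inherited from that reference. The principal obstacle I foresee is the second step: producing an inequality of the form $(\log H)''\geq -C(M_A,M_1,M_2)$ while only having $L^{\infty}$ control of the gauge-invariant quantity $x^tB$ and no control whatsoever on $|\widetilde A|$ or $\mathrm{div}\,\widetilde A$ individually. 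The Cronstr\"om representation of $\widetilde A$ as a line integral of $B$ is what ultimately allows every $A$-dependent error term in the commutator computation to be rewritten in terms of $x^tB$, and making this rewriting rigorous---together with justifying the integrations by parts on the weighted $L^2$ objects and the limit $\e\to 0^+$---is where most of the technical work will lie.
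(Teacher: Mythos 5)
Your proposal follows essentially the same route as the paper's proof: reduction to the Cronstr\"om gauge via \eqref{eq:cronstrom13hardy} (so that all potential-dependent quantities are controlled by $\|x^tB\|_{L^\infty}$ alone), a logarithmic convexity estimate proved through parabolic regularization, Appell transformation and the symmetric/skew-symmetric operator decomposition, and finally a Carleman inequality with a Gaussian weight whose center translates along the distinguished direction $v$, yielding $u\equiv0$ for $\alpha\beta<2$. The only minor imprecision is your attribution of the admissibility of the translated weight to $v\cdot\widetilde A\equiv0$: the mechanism actually used is that $v^tB\equiv0$ forces $(x+Rt(1-t)v)^tB=x^tB$ in the commutator $[\mathcal S,\mathcal A]$ (together with $x\cdot A_t\equiv0\equiv v\cdot A_t$ after the Appell change), which is exactly how \eqref{eq:ortoghardy} prevents the $R$-growing magnetic cross-term.
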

\begin{remark}\label{rem:self}
  Among various consequences, conditions \eqref{eq:cronstrom13hardy}, \eqref{eq:assA3hardy} and  \eqref{eq:assV3hardy}
 imply the self-adjointness in $L^2$ of the hamiltonian $H_A:=-\Delta_A+V_1$, with form-domain $H^1(\R^n)$, after a suitable reduction to the so called Cr\"onstrom (or transversal) gauge (see Section \ref{subsec:cronstrom} and Proposition \ref{prop:self} below). Hence the
  Schr\"odinger flow $e^{itH_A}$ is well defined for any $t\in\R$ by the Spectral Theorem, and unitary in $L^2$, so that given $u(x,0)\in L^2$ there exists a unique solution $u\in\mathcal C([0,1];L^2(\R^n))$ of the integral equation
  \begin{equation*}
    u(\cdot,t) =
    e^{itH_A}u(\cdot,0)+\int_0^t
    e^{i(t-s)H_A}V_2(\cdot,s)u(\cdot,s)\,ds,
  \end{equation*}
  provided \eqref{eq:assV23hardy}.

  In addition, also the heat flow $e^{tH_A}$ is well defined for positive times, and this will be used in the sequel.
\end{remark}
\begin{remark}\label{rem:kernel}
Notice that no smallness conditions on $A,V_1,V_2$ are required in the statement of Theorem
\ref{thm:hardy}. On the other hand,
  condition \eqref{eq:ortoghardy} naturally comes into play once we prove a Carleman estimate (Lemma \ref{thm:carleman} below), which is one of the tools to prove Theorem \ref{thm:hardy}. We remark that we cannot prove the result in dimension $n=2$, which remains as an open question, since there are no $2\times2$-antisymmetric matrices with non-trivial kernel.

The clearest examples of fields $B$ satisfying our assumptions can be constructed as follows. Denote by
  \begin{equation*}
    M_{2k-1}=
    \left(
    \begin{array}{cccc}
      J & \ & \ & \
      \\
      \  & \ddots & \ & \
      \\
      \ & \ & J & \
      \\
      \ & \ & 0 & 0
    \end{array}
    \right),
    \qquad
    M_{2k}=
    \left(
    \begin{array}{cccc}
      J & \ & \ & \
      \\
      \  & \ddots & \ & \
      \\
      \ & \ & J & \
      \\
      \ & \ & 0 & 0
      \\
      \ & \ & 0 & 0
    \end{array}
    \right),
  \end{equation*}
  with $J:=
    \left(
    \begin{array}{cc}
      0 & 1
      \\
      -1 & 0
    \end{array}
    \right)$ and $k\geq2$. Now define for $n\geq3$
    \begin{equation*}
      B(x) = \frac{x^t}{|x|^2}M_n
    \end{equation*}
  and notice that the assumptions of Theorem \ref{thm:hardy} are satisfied. In particular, in dimension $n=3$ we can identify the above $B$ with
  \begin{equation*}
    B(x) = |x|^{-2}(-x_2,x_1,0) = \text{curl}\left(\frac{x_1x_3}{|x|^2},\frac{x_2x_3}{|x|^2},
    \frac{-x_1^2-x_2^2}{|x|^2}\right).
  \end{equation*}
  This is, as far as we understand, also a quite interesting hint about the fact that Theorem
    \ref{thm:hardy} is presumably not true, in total generality, for any magnetic field $B$, and it will be matter of future work.
\end{remark}
\begin{remark}\label{rem:sharp}
  The constraint $\alpha\beta<2$ in Theorem \ref{thm:hardy} is far from the sharp $\alpha\beta<4$
  obtained in \cite{EKPV3} in the magnetic-free case $A\equiv0$. Actually, we use here the argument
  introduced in \cite{EKPV2}, involving the use of a Carleman estimate, which cannot lead to a better result.
  In addition, already at this level, we see the necessity of the condition \eqref{eq:ortoghardy}, which is a
  quite interesting fact. Presumably, when looking for the sharp result, some additional phenomena,
  involving the presence a non trivial magnetic field, should come into play. This will hopefully be
  matter of future work.
\end{remark}

Theorem \ref{thm:hardy} has several consequences regarding uniqueness of solutions to \eqref{eq:main}, both in the linear and nonlinear cases (i.e. $V_2(x,t)=|u(x,t)|^p$), which we will not investigate in this paper (see \cite{EKPV2, EKPV3} for details).

The main tool to prove theorem \ref{thm:hardy} is the following logarithmic convexity result.

\begin{theorem}[logarithmic convexity]\label{thm:3}
  Let $n\geq 2$, and consider a solution $u\in\mathcal C([0,1];L^2(\R^n))$
  to
  \begin{equation}\label{eq:main3}
    \partial_tu
    =
    i\left(\Delta_A+V_1(x)+V_2(x,t)\right)u
  \end{equation}
  in $\R^n\times[0,1]$, with $A=A(x):\R^n\to\R^n$, $V_1=V_1(x):\R^n\to\R$, $V_2=V_2(x,t):\R^{n+1}\to\C$.
  Denote by $B=B(x)=DA-DA^t$, $B_{jk}=A^k_j-A^j_k$ and assume that
  \begin{equation}\label{eq:cronstrom13}
    \int_0^1A(sx)\,ds\in\R^n
  \end{equation}
  is well defined at almost every $x\in\R^n$.
  Moreover, assume that
  \begin{align}
    \label{eq:assA3}
    &
    \|x^tB\|_{L^\infty}^2
    :=
    \frac{M_A}{4}<\infty
    \\
    \label{eq:assV3}
    &
    \|V_1\|_{L^\infty}
    :=M_1<\infty
    \\
    \label{eq:assV23}
    &
    \sup_{t\in[0,1]}\left\|
    e^{\frac{|\cdot|^2}{(\alpha t+\beta
    (1-t))^2}}V_2(\cdot,t)\right\|_{L^\infty}
    e^{\sup_{t\in[0,1]}\left\|\Im V_2(\cdot,t)\right\|
    _{L^\infty}}
    :=M_2<\infty
    \\
    \label{eq:decay3}
    &
    \left\|e^{\frac{|\cdot|^2}{\beta^2}}
    u(\cdot,0)\right\|_{L^2}+
    \left\|e^{\frac{|\cdot|^2}{\alpha^2}}
    u(\cdot,1)\right\|_{L^2}<\infty,
  \end{align}
  for some $\alpha,\beta>0$.
  Then, the function
  \begin{equation*}
    \Theta(t):=
    \log\left\|
    e^{\frac{|\cdot|^2}{(\alpha t+\beta(1-t))^2}}
    u(\cdot,t)\right\|_{L^2}^{\alpha t+\beta(1-t)}
  \end{equation*}
  is convex in $[0,1]$. In addition, there exists a constant $N=N(\alpha,\beta)>0$ such that
  \begin{align}
    \label{eq:31}
    &
    \left\|
    e^{\frac{|\cdot|^2}{(\alpha t+\beta(1-t))^2}}
    u(\cdot,t)\right\|_{L^2}
    \\
    \nonumber
    &
    \ \
    \leq
    e^{N\left[M_A+M_1+M_2+M_1^2+M_2^2\right]}
    \left\|e^{\frac{|\cdot|^2}{\beta^2}}
    u(\cdot,0)\right\|_{L^2}^{\frac{\beta(1-t)}
    {\alpha t+\beta(1-t)}}
    \left\|e^{\frac{|\cdot|^2}{\alpha^2}}
    u(\cdot,1)\right\|_{L^2}^{\frac{\alpha t}
    {\alpha t+\beta(1-t)}}
    \\
    \label{eq:32}
    &
    \left\|\sqrt{t(1-t)}
    e^{\frac{|x|^2}{(\alpha t+\beta(1-t))^2}}
    \nabla_Au(x,t)\right\|_{L^2(\R^n\times[0,1])}
    \\
    \nonumber
    &
    \ \
    \leq
    e^{N\left[M_A+M_1+M_2+M_1^2+M_2^2\right]}
    \left(
    \left\|e^{\frac{|\cdot|^2}{\beta^2}}
    u(\cdot,0)\right\|_{L^2}+
    \left\|e^{\frac{|\cdot|^2}{\alpha^2}}
    u(\cdot,1)\right\|_{L^2}
    \right).
  \end{align}
\end{theorem}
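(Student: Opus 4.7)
Our plan follows the real-variable strategy of Escauriaza--Kenig--Ponce--Vega (\cite{EKPV2,EKPV3}), suitably adapted to the covariant setting. First, gauge invariance of \eqref{eq:main3} together with \eqref{eq:cronstrom13} allows us to pass to the Cr\"onstr\"om gauge $x\cdot A\equiv 0$ discussed in Section~\ref{subsec:cronstrom}. In that gauge the representation $A_k(x)=-\int_0^1 sB_{kj}(sx)x_j\,ds$ together with \eqref{eq:assA3} yields $\|A\|_{L^\infty}^2\le M_A/4$, so $A$ becomes a bounded potential without any smallness assumption.

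We then set $a(t):=\alpha t+\beta(1-t)$, $\phi(x,t):=|x|^2/a(t)^2$ and $f(x,t):=e^{\phi(x,t)}u(x,t)$. A direct computation starting from \eqref{eq:main3} gives $\partial_tf=(\mathcal{S}+\mathcal{A})f$, where the self-adjoint and anti-self-adjoint parts in $L^2$ are
\begin{equation*}
\mathcal{S}=\phi_t-i\bigl(2\nabla\phi\cdot\nabla_A+\Delta\phi\bigr)-\Im V_2,\qquad \mathcal{A}=i\bigl(\Delta_A+|\nabla\phi|^2+V_1+\Re V_2\bigr).
\end{equation*}
Writing $H(t):=\|f(\cdot,t)\|_{L^2}^2$, the standard identities $H'=2(\mathcal{S}f,f)$ and $H''=2((\partial_t\mathcal{S})f,f)+4\|\mathcal{S}f\|_{L^2}^2+2([\mathcal{S},\mathcal{A}]f,f)$, combined with the Cauchy--Schwarz bound $(\mathcal{S}f,f)^2\le\|\mathcal{S}f\|_{L^2}^2\|f\|_{L^2}^2$, yield
\begin{equation*}
(\log H)''(t)\ge\frac{2\bigl(\bigl(\partial_t\mathcal{S}+[\mathcal{S},\mathcal{A}]\bigr)f,f\bigr)}{\|f\|_{L^2}^2}.
\end{equation*}

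The heart of the argument is the explicit computation of $\partial_t\mathcal{S}+[\mathcal{S},\mathcal{A}]$. Using the covariant commutator identity $[\nabla_A^k,(\nabla_A^j)^2]=-i\bigl(2B_{kj}\nabla_A^j+\partial_jB_{kj}\bigr)$, the dominant contribution $[-2i\nabla\phi\cdot\nabla_A,\,i\Delta_A]$ produces the favourable Hessian term $(8/a^2)|\nabla_Af|^2$, together with unwanted contributions of the form $(4i/a^2)(x^tB)\cdot\nabla_Af$. Since $\nabla\phi=2x/a^2$, these errors factor precisely through $x^tB$ and, by \eqref{eq:assA3} and Young's inequality, can be absorbed into the Hessian modulo a remainder bounded by $CM_A\|f\|_{L^2}^2$. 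The time derivative $\partial_t\mathcal{S}$ cancels against lower-order contributions arising from the $a(t)$-normalisation in the definition of $\Theta$, while the commutators involving $V_1$ and $V_2$ produce errors of order $M_1+M_2+M_1^2+M_2^2$ by \eqref{eq:assV3}--\eqref{eq:assV23}; here the weighted $L^\infty$-bound on $V_2$ is essential in order to match the Gaussian weight. Altogether we obtain
\begin{equation*}
\Theta''(t)\ge -N\bigl(M_A+M_1+M_2+M_1^2+M_2^2\bigr)
\end{equation*}
for some $N=N(\alpha,\beta)>0$. A quadratic correction then produces the convexity of $\Theta$ on $[0,1]$ and, via the elementary bound on convex functions, estimate \eqref{eq:31}. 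Multiplying the differential inequality by $t(1-t)$ and retaining the Hessian term prior to its absorption yields \eqref{eq:32}.

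The principal obstacle is rigour: the computation above needs $f(\cdot,t)\in H^2$ with enough decay to justify the repeated integration by parts, whereas the only hypothesis on $u$ is $u\in\mathcal{C}([0,1];L^2)$. Following \cite{EKPV2}, we regularise by adding an artificial dissipation $\varepsilon\Delta$ to \eqref{eq:main3} and by replacing $\phi$ with a truncation $\phi_R$ whose derivatives of all orders are bounded. For the regularised problem all manipulations are legitimate, and the resulting bounds depend only on $M_A,M_1,M_2$ and on the data at $t=0,1$, hence are uniform in $R$ and $\varepsilon$. Passing to the limits $R\to\infty$ and $\varepsilon\to0^+$, relying on the well-posedness recalled in Remark~\ref{rem:self}, concludes the proof.
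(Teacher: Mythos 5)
Your plan diverges from the paper's proof in two structural ways: you attack the moving Gaussian weight directly instead of first applying the Appell transformation (Lemma \ref{lem:appell}) to reduce to $\alpha=\beta$, and you regularize by adding $\varepsilon\Delta$ to the equation rather than by the paper's covariant heat-flow device. Both shortcuts conceal genuine gaps. The first and most serious one is that the differential inequality you display does not yield anything for this weight. Write $\ell(t):=\alpha t+\beta(1-t)$ and $\phi=|x|^2/\ell(t)^2$. Using \eqref{eq:St}--\eqref{eq:commutator} with $a=0$, $b=1$ and stationary $A$, the quadratic form you need is
\begin{align*}
\int\left(\mathcal S_t+[\mathcal S,\mathcal A]\right)f\,\overline f\,dx
&=\int\phi_{tt}|f|^2
+4\Im\int \overline f\,\nabla\phi_t\cdot\nabla_Af
+4\int\nabla_Af\cdot D^2\phi\,\overline{\nabla_Af}
\\
&\quad
+4\int|f|^2\,\nabla\phi\cdot D^2\phi\,\nabla\phi
-4\Im\int f\,(\nabla\phi)^tB\cdot\overline{\nabla_Af},
\end{align*}
and since $\phi_{tt}=6(\alpha-\beta)^2|x|^2/\ell^4$ and $\nabla\phi_t=-4(\alpha-\beta)x/\ell^3$, absorbing the cross term into the Hessian term $(8/\ell^2)\int|\nabla_Af|^2$ leaves the coefficient $\frac{32}{\ell^6}-\frac{2(\alpha-\beta)^2}{\ell^4}$ in front of $\int|x|^2|f|^2$. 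This is negative as soon as $|\alpha-\beta|\max(\alpha,\beta)>4$, and the offending term grows like $|x|^2$, so no bound $\mathcal S_t+[\mathcal S,\mathcal A]\ge-M_0$ of the type \eqref{eq:logconv1} holds: your inequality $(\log H)''\ge 2((\mathcal S_t+[\mathcal S,\mathcal A])f,f)/\|f\|^2$ is vacuous for general $\alpha,\beta$. The cancellation you invoke (``$\partial_t\mathcal S$ cancels against the $a(t)$-normalisation'') is indeed the crux, but it takes place only in the combined operator $\ell(t)\left(\mathcal S_t+[\mathcal S,\mathcal A]\right)+2(\alpha-\beta)\mathcal S$ that governs $\Theta''$, where the $(\alpha-\beta)^2|x|^2$ contributions cancel exactly and the magnetic term must afterwards be absorbed using the completed square $\nabla_Af-i\tfrac{\alpha-\beta}{2\ell}xf$ together with $x^tB\cdot x=0$. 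You assert this cancellation but never perform it, and it is essentially the entire content of the theorem; the paper obtains it for free because after the Appell transformation $\dot\ell=0$ and Lemma \ref{lem:3} applies with a fixed Gaussian weight. A related flaw: by placing $V_1+\Re V_2$ inside $\mathcal A$ and $-\Im V_2$ inside $\mathcal S$, your commutator $[\mathcal S,\mathcal A]$ contains derivatives of the potentials, which are not assumed to exist; the paper keeps the potentials out of $\mathcal S,\mathcal A$ altogether, as the perturbation $|\partial_tf-(\mathcal S+\mathcal A)f|\le M_1|f|+G$ in Lemma \ref{lem:logconv}.

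The second gap is the regularization. Adding $\varepsilon\Delta$ produces a solution of a \emph{different} equation with the same initial datum, and you have no mechanism to transfer hypothesis \eqref{eq:decay3}, which constrains $u$ at \emph{both} times $t=0$ and $t=1$, to the regularized flow: the regularized solution at $t=1$ bears no quantitative weighted-$L^2$ relation to $u(\cdot,1)$, so the right-hand sides of \eqref{eq:31}--\eqref{eq:32} for the regularized problem are not controlled by the data of the theorem, and the uniformity in $\varepsilon$ you claim cannot be established; Remark \ref{rem:self} concerns the original equation and does not help here. This is precisely the ``conceptual obstacle'' the paper highlights. Its resolution is to set $u_\epsilon(\cdot,t):=e^{\epsilon tH_A}u(\cdot,t)=e^{(\epsilon+i)tH_A}u(\cdot,0)$, i.e.\ to apply the heat semigroup of the \emph{same} Hamiltonian to the actual solution, which requires the self-adjointness of $H_A$ (Proposition \ref{prop:self}, itself resting on the Cr\"onstr\"om gauge reduction); then $u_\epsilon(\cdot,0)=u(\cdot,0)$, $u_\epsilon(\cdot,1)=e^{\epsilon H_A}u(\cdot,1)$, and the Gaussian bounds at both endpoints transfer through the parabolic estimate of Lemma \ref{lem:dissipation}, as carried out in Lemma \ref{lem:B}. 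Without this (or an equivalent device) the limiting argument closing your proof cannot be completed.
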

\begin{remark}
  Notice that in this case condition \eqref{eq:ortoghardy} is not needed; as a consequence, we can also handle the 2D case, which is included in the statement.
  We finally remark that both Theorems \ref{thm:hardy} and \ref{thm:3} hold in dimension $n=1$,  since in this case any reasonable magnetic potential can be gauged away by the Fundamental Theorem of Calculus.
\end{remark}

The   strategy of the proof of Theorem \ref{thm:3} is  as  follows:
\begin{itemize}
\item
by gauge transformation, we reduce to the case in which $x\cdot A\equiv0$ (see section
\ref{subsec:cronstrom}) below;
\item
we add a small dissipation term which regularizes the solution and gives a useful preservation property for the exponentially weighted $L^2$-norms of the solution (Lemma
\ref{lem:dissipation});
\item
by conformal (or Appell) transformation (see Lemma \ref{lem:appell}), we reduce to the case $\alpha=\beta$;
\item
we prove Theorem \ref{thm:3} in the case $\alpha=\beta$ (Lemmata \ref{lem:3}, \ref{lem:4});
\item
we translate the result in terms of the original solution, by inverting the conformal transformation, obtaining the final result.
\end{itemize}
Once Theorem \ref{thm:3} is proved, then Theorem \ref{thm:hardy} follows as an application of a Carleman inequality (Lemma \ref{thm:carleman}).

{\bf Acknowledgements.} The authors wish to thank Luis Escauriaza and Luis Vega for some useful discussions about the topic of the paper, and Vladimir Georgiev for addressing them to the topic of Section \ref{subsec:cronstrom} below.

\section{Preliminaries}\label{sec:preliminaries}

We devote this section to collect some preliminary results which will be needed in the proofs of our main results.

In order to prove the main theorems in a rigorous way, we need to add a dissipation term to equation
\eqref{eq:main}, which permits to assure that a gaussian decay at time 0 is preserved during the time evolution. For this reason, we study in this section some abstract properties regarding the solutions to
\begin{equation}\label{eq:1}
  \partial_tu=(a+ib)\left(\Delta_Au+V(x,t)u+F(x,t)\right),
\end{equation}
with $a,b\in\R$, $A=A(x,t):\R^{n+1}\to\R^n$, $V(x,t),F(x,t):\R^{n+1}\to\C$.

\subsection{The Cronstr\"om gauge}\label{subsec:cronstrom}
 Our first tool is the gauge invariance of equation \eqref{eq:1}.
We need to review some algebraic properties of magnetic Schr\"odinger operators, pointing our attention on the so called Cronstr\"om (or transversal) gauge.

Equation \eqref{eq:1} is {\it gauge invariant} in the following
sense: if $u$ solves \eqref{eq:1}, and we denote by $\widetilde
A=A+\nabla\varphi$, with $\varphi=\varphi(x):\R^n\to\R$, then the
function $\widetilde u=e^{i\varphi}u$ is a solution to
\begin{equation}\label{eq:2}
  \partial_t\widetilde u=(a+ib)\left(\Delta_{\widetilde A}\widetilde u+V(x,t)\widetilde u+e^{i\varphi}F(x,t)\right).
\end{equation}
Indeed, it is quite simple to verify that $\Delta_{\widetilde A}(e^{i\varphi}u)=e^{i\varphi}\Delta_Au$.

\begin{definition}\label{def:cronstrom}
  A connection $\nabla-iA(x)$ is said to be in the {\it Cronstr\"om gauge} (or {\it transversal gauge}) if $A\cdot x=0$, for any $x\in\R^n$.
\end{definition}

The following Lemma shows the transformation which permits to reduce a suitable potential to the Cronstr\"om gauge.

\begin{lemma}\label{lem:cronstrom1}
  Let $A=A(x)=(A^1(x),\dots,A^n(x)):\R^n\to\R^n$, for $n\geq2$ and denote by $B=DA-DA^t\in \mathcal M_{n\times
n}(\R)$, $B_{jk}=A^k_j-A^j_k$, and $\Psi(x):=x^tB(x)\in\R^n$. Assume
that the two vector quantities
  \begin{equation}\label{eq:cronstrom1}
    \int_0^1A(sx)\,ds\in\R^n,
    \qquad
    \int_0^1\Psi(sx)\,ds\in\R^n
  \end{equation}
  are finite, for almost every $x\in\R^n$; moreover, define the (scalar) function
  \begin{equation}\label{eq:varphi}
    \varphi(x):=x\cdot\int_0^1A(sx)\,ds\in\R.
  \end{equation}
  Then, the following two identities hold:
  \begin{align}\label{eq:cronstrom2}
    \widetilde A(x):=A(x)-\nabla\varphi(x)  & = -\int_0^1\Psi(sx)\,ds
    \\
    x^tD\widetilde A(x)  & = -\Psi(x) +\int_0^1\Psi(sx)\,ds.
    \label{eq:cronstrom3}
  \end{align}
\end{lemma}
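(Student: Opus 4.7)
The plan is to prove both identities by direct calculation: differentiate $\varphi$ under the integral sign, use an Euler-type identity coming from $\frac{d}{ds}[sA^k(sx)]$ to eliminate $\int_0^1 A^k(sx)\,ds$, and match the resulting cross-terms against $\Psi$ using $\Psi^k(y)=\sum_j y_jB_{jk}(y)$ and the antisymmetry of $B$. Concretely, differentiating $\varphi(x) = \sum_j x_j\int_0^1 A^j(sx)\,ds$ and exchanging $\partial_{x_k}$ with $\int_0^1$ (justified under sufficient regularity of $A$) gives
\begin{equation*}
\partial_{x_k}\varphi(x) = \int_0^1 A^k(sx)\,ds + \int_0^1 s\sum_j x_j A^j_k(sx)\,ds,
\end{equation*}
while the fundamental theorem of calculus applied to $s\mapsto sA^k(sx)$ on $[0,1]$ yields the Euler-type identity
\begin{equation*}
A^k(x) = \int_0^1 A^k(sx)\,ds + \int_0^1 s\sum_j x_j A^k_j(sx)\,ds.
\end{equation*}
Subtracting, using $A^j_k-A^k_j=-B_{jk}$, and invoking the scaling rule $s\sum_j x_jB_{jk}(sx) = \sum_j (sx_j)B_{jk}(sx) = \Psi^k(sx)$, produces \eqref{eq:cronstrom2}.

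For the second identity, I would differentiate the formula for $\widetilde A$ just obtained. The chain rule provides $\partial_{x_j}[\Psi^k(sx)] = s(\partial_j\Psi^k)(sx)$, so, recognising $\sum_j x_j(\partial_j\Psi^k)(sx)$ as the $s$-derivative of $\Psi^k(sx)$,
\begin{equation*}
(x^tD\widetilde A)_k = \sum_j x_j\,\partial_{x_j}\widetilde A^k(x) = -\int_0^1 s\,\frac{d}{ds}\bigl[\Psi^k(sx)\bigr]\,ds.
\end{equation*}
A single integration by parts in $s$ on $[0,1]$, namely $\int_0^1 s\psi'(s)\,ds = \psi(1) - \int_0^1\psi(s)\,ds$, then yields exactly $-\Psi^k(x)+\int_0^1\Psi^k(sx)\,ds$, which is \eqref{eq:cronstrom3}.

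I do not anticipate any genuine analytic obstacle: the proof is bookkeeping of indices, of the two antisymmetric combinations $B_{jk}$ versus $A^j_k-A^k_j$, and of the factors of $s$ produced by the chain rule under scaling; the interchange of $\partial_{x_j}$ with $\int_0^1$ is routine under \eqref{eq:cronstrom1} once mild regularity of $A$ is imposed. As an independent sanity check, one sees directly from \eqref{eq:cronstrom2} that $\widetilde A(x)\cdot x = -\int_0^1 s\,(x^tB(sx)x)\,ds = 0$ by antisymmetry of $B$, confirming that $\widetilde A$ is in the Cronstr\"om gauge of Definition \ref{def:cronstrom}, as the name suggests.
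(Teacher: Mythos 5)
Your proof is correct and is essentially the paper's own argument: for \eqref{eq:cronstrom2} the paper likewise differentiates $\varphi$ under the integral sign and integrates $\int_0^1 s\frac{d}{ds}\left[A^j(sx)\right]ds$ by parts, which is exactly the integrated form of your Euler identity for $s\mapsto sA^k(sx)$; for \eqref{eq:cronstrom3} it likewise differentiates the integral formula for $\widetilde A$, recognises the contraction with $x$ as an $s$-derivative, and integrates by parts once in $s$.

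One piece of bookkeeping deserves a flag, although it is a defect you inherit from the paper rather than introduce. With your stated (and strictly correct) convention $\Psi^k(y)=\sum_j y_jB_{jk}(y)$ for the row-vector product $y^tB$, your subtraction actually yields $\widetilde A(x)=+\int_0^1\Psi(sx)\,ds$, i.e.\ the \emph{opposite} sign to \eqref{eq:cronstrom2} as stated; the stated minus sign corresponds to contracting the \emph{second} index, $\sum_j y_jB_{kj}(y)$. Since $B$ is antisymmetric the two conventions differ exactly by a sign, and the paper itself mixes them: its derivation of \eqref{eq:cronstrom2} identifies $\sum_k sx_kB_{jk}(sx)$ with $\Psi^j(sx)$ (second-index contraction), while its derivation of \eqref{eq:cronstrom3} starts from $\widetilde A^j=-\int_0^1\sum_h sx_h\left(A^j_h-A^h_j\right)(sx)\,ds$, which contracts the first index. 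Nothing downstream is sensitive to the choice, because every later use of the lemma --- $\|\widetilde A\|_{L^\infty}\leq\|x^tB\|_{L^\infty}$, $x\cdot\widetilde A\equiv0$, $x\cdot x^tD\widetilde A\equiv0$, and \eqref{eq:assA3new} --- is invariant under $\Psi\mapsto-\Psi$; note also that your closing sanity check $x\cdot\widetilde A=0$ is sign-blind, which is why it cannot detect the discrepancy. In a final write-up you should therefore either adopt the second-index convention so the stated signs come out, or keep your convention and flip the signs in both \eqref{eq:cronstrom2} and \eqref{eq:cronstrom3}; they flip together, so your derivation of the second identity from the first survives verbatim.
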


\begin{proof}
  A simple proof of identity \eqref{eq:cronstrom2} can be found e.g. in \cite{I}. For the sake of completeness, we write it below.
  A direct computation shows that
  \begin{align*}
    \varphi_j(x)
    &
    = \frac{\partial}{\partial x_j}\varphi(x)
    = \int_0^1A^j(sx)\,ds
    +\int_0^1\sum_{k=1}^n
    sx_kA^k_j(sx)\,ds
    \\
    &
    =
    \int_0^1A^j(sx)\,ds
    +\int_0^1\sum_{k=1}^n
    sx_kA^j_k(sx)\,ds
    +\int_0^1\sum_{k=1}^n
    sx_kB_{jk}(sx)\,ds
    \\
    &
    =
    \int_0^1A^j(sx)\,ds
    +\int_0^1 s\frac{d}{ds}\left[
    A^j(sx)\right]\,ds
    +
    \int_0^1\Psi^j(sx)\,ds.
  \end{align*}
  Integrating by parts now yields \eqref{eq:cronstrom2}.

  We now pass to the proof of \eqref{eq:cronstrom3}. By \eqref{eq:cronstrom2}, we can now compute
  \begin{align*}
    & [D\widetilde A(x)]_{kj}=[D(A-\nabla\varphi)]_{kj}(x)
    \\
    & \ \
    =
    -\frac{\partial}{\partial x_k}
    \int_0^1\sum_{h=1}^nsx_h\left(A^j_h(sx)-A^h_j(sx)\right)\,ds
    \\
    & \ \
    = -\int_0^1s\left(A^j_k(sx)-A^k_j(sx)\right)\,ds
    -\sum_{h=1}^n
    \int_0^1s^2x_h\left(A^j_{h}-A^h_{j}\right)_k(sx)\,ds.
  \end{align*}
   Integrating by parts we obtain
  \begin{align*}
    &
    \left[x^tD\widetilde A(x)\right]^j
    =
    \sum_{k=1}^n
    x_k[D(A-\nabla\varphi)]_{kj}(x)
    \\
    &
    =
    -\sum_{k=1}^n\int_0^1sx_k
    \left(A^j_k(sx)-A^k_j(sx)\right)\,ds
    -\sum_{h=1}^n\sum_{k=1}^n
    \int_0^1s^2x_hx_k\left(A^j_{h}-A^h_{j}\right)_k(sx)\,ds
    \\
    &
    =-\sum_{k=1}^n\int_0^1sx_k
    \left(A^j_k(sx)-A^k_j(sx)\right)\,ds
    -\sum_{h=1}^n\int_0^1s^2x_h
    \frac{d}{ds}\left[A^j_h(sx)-A^h_j(sx)\right]\,ds
    \\
    &
    =\sum_{k=1}^n\int_0^1sx_k
    \left(A^j_k(sx)-A^k_j(sx)\right)\,ds
    -\sum_{k=1}^n
    x_k\left(A^j_k(x)-A^k_j(x)\right),
  \end{align*}
  which proves \eqref{eq:cronstrom3}.
\end{proof}
\begin{corollary}\label{cor:cronstrom}
  Under the same assumptions of Lemma \ref{lem:cronstrom1}, we have:
  \begin{equation}\label{eq:gaugefinal}
    x\cdot\widetilde A(x) \equiv0,
    \qquad
    x\cdot x^tD\widetilde A(x)\equiv0.
  \end{equation}
\end{corollary}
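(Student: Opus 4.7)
The plan is to deduce both identities directly from the formulas \eqref{eq:cronstrom2} and \eqref{eq:cronstrom3} supplied by Lemma \ref{lem:cronstrom1}, exploiting the antisymmetry of $B$. The key algebraic fact is that for any antisymmetric matrix $B(y)\in\mathcal M_{n\times n}(\R)$ and any $x\in\R^n$, one has
\begin{equation*}
x\cdot\Psi(y) = \sum_{j,k=1}^n x_j x_k B_{kj}(y) = 0
\end{equation*}
whenever we contract $\Psi(y)=y^tB(y)$ against a vector $x$ that is proportional to $y$, because in that case the sum becomes a bilinear form with a symmetric tensor $x_jx_k$ applied to an antisymmetric matrix.

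For the first identity in \eqref{eq:gaugefinal}, I would plug \eqref{eq:cronstrom2} into $x\cdot\widetilde A(x)$ to obtain
\begin{equation*}
x\cdot\widetilde A(x) = -\int_0^1 x\cdot\Psi(sx)\,ds = -\int_0^1 \frac{1}{s}\,(sx)\cdot\Psi(sx)\,ds,
\end{equation*}
and then observe that each integrand vanishes by the antisymmetry remark above, applied at the point $y=sx$ with contracting vector $sx$.

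For the second identity, substituting \eqref{eq:cronstrom3} gives
\begin{equation*}
x\cdot x^tD\widetilde A(x) = -x\cdot\Psi(x) + \int_0^1 x\cdot\Psi(sx)\,ds,
\end{equation*}
and both terms vanish by the very same argument. There is no real obstacle here; the only point to check is that the change of variables and the pointwise cancellation are legitimate, which is immediate since the integrals in \eqref{eq:cronstrom1} were assumed to be well defined almost everywhere.
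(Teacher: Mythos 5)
Your proposal is correct and follows exactly the paper's own argument: the paper also deduces \eqref{eq:gaugefinal} immediately from \eqref{eq:cronstrom2}, \eqref{eq:cronstrom3} and the antisymmetry of $B$, which forces $y\cdot\Psi(y)=y^tB(y)y=0$ pointwise. Your write-up merely spells out the contraction detail that the paper leaves implicit.
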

\begin{proof}
  The proof is a quite immediate consequence of \eqref{eq:cronstrom2}, \eqref{eq:cronstrom3}, and the fact that $B$ is an anti-symmetric matrix.
\end{proof}

\begin{remark}\label{rem:cronstromnew}
  Notice that conditions \eqref{eq:cronstrom13} and \eqref{eq:assA3} in Theorem \ref{thm:3} obviously imply \eqref{eq:cronstrom1}, hence Lemma \ref{lem:cronstrom1} and Corollary \ref{cor:cronstrom} are applicable under the assumptions of our main Theorems.
\end{remark}

\begin{example}[Aharonov-Bohm]\label{ex:AB}
  The following is possibly the most relevant example of a 2D-magnetic potential for which Lemma \ref{lem:cronstrom1} and Corollary \ref{cor:cronstrom} do not apply.
  Define the 2D-{\it Aharonov-Bohm} potential as
  \begin{equation*}
    A(x)=|x|^{-2}(-x_2,x_1).
  \end{equation*}
  In dimension $n=2$, the antisymmetric gradient $B=DA-DA^t$ is identified with the scalar quantity $B=\text{curl}\,A=A^1_2-A^2_1$.
  Writing
  \begin{equation*}
    A(x) = \nabla^{\bot}\log(|x|),
  \end{equation*}
  where $\nabla^\bot$ is the orthogonal gradient, chosen with the correct orientation,
  gives $B=\text{curl}\,A=\Delta\log(|x|)=2\pi\delta$.
  This shows that $\Psi(x) = x^tB(x)\equiv0$; if formula
  \eqref{eq:cronstrom2} were true in this case, it would give that $A\equiv0$, which is a contradiction. In fact, \eqref{eq:cronstrom1} does not hold in this case, since $A$ is too singular.

  In similar ways, it is possible to construct such examples of potentials $A$, in every dimension, satisfying $x^tB=0$ with $A\neq0$, which are not in contradiction with identity
  \eqref{eq:cronstrom2} since they do not satisfy  \eqref{eq:cronstrom1}.
  \end{example}
  
  \subsection{Self-adjointness}\label{subsec:self}
We now state a standard result about the self-adjointness of $H_A=-\Delta_A-V_1$.
\begin{proposition}\label{prop:self}
  Let $A=A(x)=(A^1(x),\dots,A^n(x)):\R^n\to\R^n$, $V_1=V_1(x):\R^n\to\R$ and denote by $ B=DA-DA^t\in \mathcal M_{n\times n}(\R)$, $B_{jk}=A^k_j-A^j_k$, and $\Psi(x):=x^tB(x)\in\R^n$, for $n\geq2$. Assume that  \begin{equation}\label{eq:cronstrom1self}
    \int_0^1A(sx)\,ds\in\R^n,
  \end{equation}
  is finite, for almost every $x\in\R^n$; moreover, assume that
  \begin{equation}\label{eq:thanks}
  V_1(x)\in L^\infty
  \qquad
  x^tB(x)\in L^\infty,
  \end{equation}
  and define $\widetilde A$ by \eqref{eq:cronstrom2}.
  Finally, consider the quadratic form
  \begin{align*}
    \widetilde q(\varphi,\psi)
    :=
    \int\nabla_{\widetilde A}\varphi\cdot\overline{\nabla_{\widetilde A}\psi}\,dx
    +\int V_1\varphi\overline\psi\,dx.
  \end{align*}
  Then $\widetilde q$ is the form associated to a unique self-adjoint operator
   $H_{\widetilde A}=-\Delta_{\widetilde A}-V_1(x)$, with form domain $H^1(\R^n)$.
\end{proposition}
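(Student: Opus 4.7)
\medskip

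\noindent\textbf{Proof plan.} The strategy is to apply the first representation theorem (equivalently, the KLMN theorem) for closed semi-bounded symmetric sesquilinear forms, so the work reduces to checking that $\widetilde q$ is well-defined, symmetric, semi-bounded below, and closed on $H^1(\R^n)$. The crucial preliminary observation, which makes the whole argument elementary, is that in the Cr\"onstrom gauge the potential $\widetilde A$ is actually bounded.

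\medskip

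\noindent\textbf{Step 1 (boundedness of $\widetilde A$).} By hypothesis \eqref{eq:cronstrom1self} and the fact that $x^tB\in L^\infty$, Lemma \ref{lem:cronstrom1} applies and gives the pointwise identity
\begin{equation*}
  \widetilde A(x)=-\int_0^1\Psi(sx)\,ds,\qquad \Psi(y)=y^tB(y).
\end{equation*}
Since $\|\Psi\|_{L^\infty}=\|x^tB\|_{L^\infty}<\infty$ by \eqref{eq:thanks}, we obtain immediately
\begin{equation*}
  \|\widetilde A\|_{L^\infty(\R^n)}\le\|x^tB\|_{L^\infty}<\infty.
\end{equation*}
Combined with $V_1\in L^\infty$, this guarantees that $\widetilde q(\varphi,\psi)$ is a finite, well-defined sesquilinear form for $\varphi,\psi\in H^1(\R^n)$, and symmetry is immediate since $\widetilde A$ is $\R^n$-valued and $V_1$ is $\R$-valued.

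\medskip

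\noindent\textbf{Step 2 (semi-boundedness and equivalence of norms).} For $\varphi\in H^1(\R^n)$ I would write $\nabla_{\widetilde A}\varphi=\nabla\varphi-i\widetilde A\varphi$ and use the elementary inequalities
\begin{equation*}
  \tfrac12\|\nabla\varphi\|_{L^2}^2-\|\widetilde A\|_{L^\infty}^2\|\varphi\|_{L^2}^2
  \le\|\nabla_{\widetilde A}\varphi\|_{L^2}^2
  \le 2\|\nabla\varphi\|_{L^2}^2+2\|\widetilde A\|_{L^\infty}^2\|\varphi\|_{L^2}^2.
\end{equation*}
Adding the potential contribution controlled by $\|V_1\|_{L^\infty}\|\varphi\|_{L^2}^2$ yields the semi-boundedness
\begin{equation*}
  \widetilde q(\varphi,\varphi)\ge-\bigl(\|\widetilde A\|_{L^\infty}^2+\|V_1\|_{L^\infty}\bigr)\|\varphi\|_{L^2}^2,
\end{equation*}
and, choosing a shift constant $c$ larger than the bound above, the form norm $(\widetilde q(\varphi,\varphi)+c\|\varphi\|_{L^2}^2)^{1/2}$ is equivalent to the standard $\|\varphi\|_{H^1}$.

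\medskip

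\noindent\textbf{Step 3 (closedness and conclusion).} Because $H^1(\R^n)$ is complete with respect to its natural norm and this norm is equivalent to the form norm, $\widetilde q$ is a closed, densely defined, symmetric, semi-bounded sesquilinear form on $L^2(\R^n)$ with form domain $H^1(\R^n)$. The first representation theorem (see e.g.\ Kato, Chap.~VI, or Reed--Simon, Vol.~I, Thm.~VIII.15) then produces a unique self-adjoint operator $H_{\widetilde A}$ associated with $\widetilde q$, whose action on smooth compactly supported functions is computed (via integration by parts) to be
\begin{equation*}
  H_{\widetilde A}\varphi=-\Delta_{\widetilde A}\varphi-V_1(x)\varphi,
\end{equation*}
as desired. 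There is no real technical obstacle here; the whole point is Step 1, namely that the assumption $x^tB\in L^\infty$ together with the Cr\"onstrom gauge reduction promotes $\widetilde A$ to a bounded vector field, after which self-adjointness is a routine application of the form approach.
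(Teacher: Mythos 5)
Your proposal is correct and follows essentially the same route as the paper's proof: the key observation that Lemma \ref{lem:cronstrom1} together with $x^tB\in L^\infty$ gives $\|\widetilde A\|_{L^\infty}\le\|x^tB\|_{L^\infty}$, then well-definedness and equivalence of the form norm with the $H^1$-norm (hence closedness and semi-boundedness), and finally the representation theorem (Theorem VIII.15 in \cite{RS}). Your version merely spells out the elementary inequalities that the paper leaves implicit.
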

\begin{proof}
The proof is completely standard. Indeed, notice that both $q$ and $\widetilde q$ are well defined on $H^1$, since $V_1\in L^\infty$ and $\widetilde A\in L^\infty$, thanks to
\eqref{eq:thanks} and Lemma \ref{lem:cronstrom1}. Moreover,  the norm
\begin{equation*}
  \||\psi|\|^2:=\widetilde q(\psi,\psi)+C\|\psi\|_{L^2}^2
\end{equation*}
is equivalent to the $H^1$-norm, for some $C>0$ sufficiently large, by the same reasons as above; this show that the form $\widetilde q$ is closed. Finally, the form is semibounded, i.e.
\begin{equation*}
  \widetilde q(\psi,\psi)\geq -C\|\psi\|_{L^2}^2,
\end{equation*}
by the same arguments. In conclusion, the thesis follows from Theorem VIII.15 in \cite{RS}.
\end{proof}

\subsection{The Appell transformation}\label{subsec:appell}

Following the strategy in \cite{EKPV2},
we now introduce a conformal transformation, usually referred to as the {\it Appell transformation}, as another tool for the proofs of our main results. As we see in the sequel, it permits to reduce matters in Theorem \ref{thm:3} to the situation in which $u(0)$ and $u(1)$ have the same gaussian decay, namely $\alpha=\beta$.
\begin{lemma}\label{lem:appell}
  Let $A=A(y,s)=(A^1(y,s),\dots,A^n(y,s)):\R^{n+1}\to\R^{n}$, $V=V(y,s),F=F(y,s):\R^n\to\C$, $u=u(y,s):\R^{n}\times[0,1]\to\C$ be a solution to
  \begin{equation}\label{eq:1appell}
  \partial_su=(a+ib)\left(\Delta_Au+V(y,s)u+F(y,s)\right),
\end{equation}
with $a+ib\neq0$,
and define, for any $\alpha,\beta>0$, the function
\begin{equation}\label{eq:appell}
  \widetilde u(x,t):=
  \left(\frac{\sqrt{\alpha\beta}}{\alpha(1-t)+\beta t}\right)^{\frac n2}
  u\left(\frac{x\sqrt{\alpha\beta}}{\alpha(1-t)+\beta t},\frac{t\beta}{\alpha(1-t)+\beta t}\right)
  e^{\frac{(\alpha-\beta)|x|^2}{4(a+ib)
  (\alpha(1-t)+\beta t)}}.
\end{equation}
Then $\widetilde u$ is a solution to
\begin{equation}\label{eq:2appell}
  \partial_t\widetilde u=(a+ib)\left(\Delta_{\widetilde A}\widetilde u
  +i\frac{(\alpha-\beta)\widetilde A\cdot x}{(a+ib)
  (\alpha(1-t)+\beta t)}\widetilde u+
  \widetilde V(x,t)\widetilde u+\widetilde F(x,t)\right),
\end{equation}
where
\begin{align}
  \label{eq:Aappell}
  \widetilde A(x,t)
  &
  = \frac{\sqrt{\alpha\beta}}{\alpha(1-t)+\beta t}
  A\left(\frac{x\sqrt{\alpha\beta}}{\alpha(1-t)+\beta t},\frac{t\beta}{\alpha(1-t)+\beta t}\right)
  \\
  \label{eq:Vappell}
  \widetilde V(x,t)
  &
  = \frac{\alpha\beta}{(\alpha(1-t)+\beta t)^2}
  V\left(\frac{x\sqrt{\alpha\beta}}{\alpha(1-t)+\beta t},\frac{t\beta}{\alpha(1-t)+\beta t}\right)
  \\
  \label{eq:Fappell}
  \widetilde F(x,t)
  &
  = \left(\frac{\sqrt{\alpha\beta}}{\alpha(1-t)+\beta t}\right)^{\frac n2+2}
  F\left(\frac{x\sqrt{\alpha\beta}}{\alpha(1-t)+\beta t},\frac{t\beta}{\alpha(1-t)+\beta t}\right)
  e^{\frac{(\alpha-\beta)|x|^2}{4(a+ib)
  (\alpha(1-t)+\beta t)}}.
\end{align}
\end{lemma}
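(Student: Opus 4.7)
The proof is a direct chain-rule calculation; the art is in choosing good shorthand and organizing the terms. I first introduce $\rho(t):=\alpha(1-t)+\beta t$, $\lambda(t):=\sqrt{\alpha\beta}/\rho(t)$, $\mu(t):=t\beta/\rho(t)$, and the amplitude/phase factor $E(x,t):=\exp\!\bigl(\frac{(\alpha-\beta)|x|^2}{4(a+ib)\rho(t)}\bigr)$, so that \eqref{eq:appell} reads $\widetilde u(x,t)=\lambda(t)^{n/2}\,u(\lambda(t)x,\mu(t))\,E(x,t)$. Elementary differentiation yields $\lambda'(t)/\lambda(t)=(\alpha-\beta)/\rho(t)$ and, crucially, $\mu'(t)=\lambda(t)^{2}$; together with $\nabla_x E/E=(\alpha-\beta)x/(2(a+ib)\rho(t))$, these are the identities that drive everything.

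I would split the computation into two steps by writing $\widetilde u = U\cdot E$ with $U(x,t):=\lambda(t)^{n/2}u(\lambda(t)x,\mu(t))$, the pure conformal rescaling. The choice \eqref{eq:Aappell}--\eqref{eq:Vappell} of $\widetilde A$ and $\widetilde V$ is tailor-made so that one has the clean identities
\begin{equation*}
\nabla_{\widetilde A}U(x,t)=\lambda(t)^{1+n/2}(\nabla_{A}u)(\lambda(t)x,\mu(t)),\qquad \Delta_{\widetilde A}U(x,t)=\lambda(t)^{2+n/2}(\Delta_{A}u)(\lambda(t)x,\mu(t)),
\end{equation*}
which are immediate since $\widetilde A$ is $A$ evaluated at the rescaled point multiplied by the Jacobian factor. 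Using $\mu'=\lambda^{2}$ to exchange $\partial_s u$ via \eqref{eq:1appell}, a short calculation gives the intermediate PDE
\begin{equation*}
\partial_t U=(a+ib)\bigl(\Delta_{\widetilde A}U+\widetilde V U+\lambda^{2+n/2}F(\lambda x,\mu)\bigr)+\frac{\alpha-\beta}{\rho(t)}\Bigl(\tfrac{n}{2}+x\cdot\nabla_x\Bigr)U,
\end{equation*}
the last term being exactly the infinitesimal dilation generator produced by $\lambda'(t)$.

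Multiplying by $E$ and passing to $\widetilde u$, I use the two key identities $\nabla_{\widetilde A}\widetilde u=E\,\nabla_{\widetilde A}U+(\nabla E/E)\widetilde u$ and $\Delta_{\widetilde A}(UE)=E\,\Delta_{\widetilde A}U+2\nabla E\cdot\nabla_{\widetilde A}U+U\,\Delta E$. The cross-term $2\nabla E\cdot \nabla_{\widetilde A} U$, once $\nabla E/E=(\alpha-\beta)x/(2(a+ib)\rho)$ is inserted, produces precisely an $x\cdot\nabla_{\widetilde A}\widetilde u$ contribution that, combined with the $x\cdot\nabla_x U$ piece from the dilation generator, rearranges into the claimed covariant first-order term $i(\alpha-\beta)(\widetilde A\cdot x)/((a+ib)\rho)\,\widetilde u$: the $\nabla_x$ parts cancel and the $-i\widetilde A$ parts survive. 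The main obstacle, and the only place where care is really needed, is verifying that all purely scalar terms cancel exactly: one must track $\partial_t E/E$, $(a+ib)\Delta E/E$, $2(a+ib)|\nabla E/E|^2$, $(\alpha-\beta)x\cdot\nabla E/(E\rho)$ and the $n(\alpha-\beta)/(2\rho)$ coming from $\lambda'$, and check that the $|x|^2$-coefficients sum to zero and the $x$-independent coefficients likewise sum to zero. This balance is exactly what fixes both the exponent $(\alpha-\beta)/(4(a+ib)\rho(t))$ of $E$ and the $\lambda^{n/2}$ prefactor in \eqref{eq:appell}, and completes the identification with \eqref{eq:2appell} and \eqref{eq:Fappell}.
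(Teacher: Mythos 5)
Your proposal is correct; I verified its key claims: the intermediate PDE for $U$ (which follows from the two identities $\mu'=\lambda^{2}$ and $\lambda'/\lambda=(\alpha-\beta)/\rho$), the covariant scaling identity $\Delta_{\widetilde A}U=\lambda^{2+n/2}(\Delta_A u)(\lambda x,\mu)$, the cancellation of the pure-gradient parts between the Gaussian cross term $2\nabla E\cdot\nabla_{\widetilde A}U$ and the dilation generator (leaving exactly the first-order term of \eqref{eq:2appell}), and the vanishing of the scalar residue $-(a+ib)\Delta E/E+\partial_t E/E+n(\alpha-\beta)/(2\rho)\equiv0$, which indeed pins down both the exponent of $E$ and the prefactor $\lambda^{n/2}$. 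Your organization is, however, genuinely different from the paper's. The paper runs a single-pass substitution: it differentiates $\widetilde u=g^{n/2}e^{h}u(y,s)$ directly in $t$, inverts the chain rule to express $g^{n/2}e^{h}g\nabla_y u$ and $g^{n/2}e^{h}g^{2}\Delta_y u$ through $\nabla_x\widetilde u$, $\Delta_x\widetilde u$ and $\nabla_x h$, and --- the main structural difference --- expands the magnetic Laplacian into its non-covariant pieces $\Delta_y u-i(\mathrm{div}_y A)u-2iA\cdot\nabla_y u-|A|^{2}u$ before substituting, closing with the relations $\widetilde A=gA$, $\mathrm{div}_x\widetilde A=g^{2}\mathrm{div}_y A$, $\widetilde V=g^{2}V$, $\widetilde F=g^{n/2+2}e^{h}F$. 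Your factorization $\widetilde u=U\cdot E$ instead splits the conformal map into pure rescaling followed by Gaussian multiplication and keeps the covariant operators $\nabla_{\widetilde A}$, $\Delta_{\widetilde A}$ intact throughout. What your route buys: gauge covariance stays manifest, you never have to bookkeep $\mathrm{div}\,A$ and $|A|^{2}$ separately, and the anomalous first-order term in \eqref{eq:2appell} gets a structural explanation (it is the $-i\widetilde A$ remnant left over after the $x\cdot\nabla$ parts cancel) rather than emerging from a pile of terms. What the paper's route buys: it is a one-shot computation with no intermediate equation for $U$, and it produces explicitly the formulas relating $y$-derivatives of $u$ to $x$-derivatives of $\widetilde u$, which is close in spirit to how the weighted-norm identities of Corollary \ref{cor:appell} are subsequently read off.
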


\begin{proof}
  The proof is basically an explicit computation.
  Let us denote by
  \begin{align*}
    g(t)
    &
    := \frac{\sqrt{\alpha\beta}}{\alpha(1-t)+\beta t},
    \qquad
     c
    := \sqrt{\frac \beta\alpha},
    \qquad
    y=xg(t),
    \qquad
    s=ctg(t)
    \\
    h(x,t)
    &
    :=
    \frac{(\alpha-\beta)|x|^2}{4(a+ib)(\alpha(1-t)+\beta t)}
    =
    \frac{(\alpha-\beta)c}{4(a+ib)\beta}g(t)|x|^2.
  \end{align*}
  With these notations, we easily get
  \begin{align*}
    \widetilde u(x,t)
    &
    =
    g^{\frac n2}e^{h}u(y,s)
    \\
    \partial_t\widetilde u
    &
    =g^{\frac n2}e^{h}
    \left[g^2\partial_su+2(a+ib)g\nabla_xh\cdot\nabla_yu
    +\frac{(\alpha-\beta)c}{\beta}g\left(\frac n2+h\right)u\right].
  \end{align*}
  On the other hand, we have
  \begin{equation*}
    \nabla_x\widetilde u
    =
    g^{\frac n2}e^h\left(g\nabla_yu+u\nabla_xh\right),
  \end{equation*}
  and therefore
  \begin{equation*}
     g^{\frac n2}e^hg\nabla_yu
    =
    \nabla_x\widetilde u-\widetilde u\nabla_x h.
  \end{equation*}
  Moreover,
  \begin{align*}
    \Delta_x\widetilde u
    &
    =
    g^{\frac n2}e^h
    \left[g^2\Delta_yu+2g\nabla_xh\cdot\nabla_yu+
    \left(|\nabla_xh|^2+\Delta_xh\right)u\right]
    \\
    g^{\frac n2}e^hg^2\Delta_yu
    &
    =\Delta_x\widetilde u
    -2\nabla_xh\cdot\nabla_x\widetilde u
    +|\nabla_xh|^2\widetilde u
    -\widetilde u\Delta_xh.
  \end{align*}
  Now expand the operator $\Delta_A=(\nabla-iA)\cdot(\nabla-iA)$, in order to rewrite equation \eqref{eq:1appell} as
  \begin{equation}\label{eq:expand}
    \partial_su
    =
    (a+ib)\left(\Delta_yu-i(\text{div}_yA)u-2iA\cdot\nabla_yu-
    |A|^2u+V(y,s)u+F(y,s)\right);
  \end{equation}
  finally, since
  \begin{equation*}
    \widetilde A = gA,
    \qquad
    \text{div}_x\widetilde A=
    g^2\text{div}_yA,
    \qquad
    \widetilde V = g^2V,
    \qquad
    \widetilde F=g^{\frac n2+2}e^hF,
  \end{equation*}
  the thesis \eqref{eq:2appell} follows from \eqref{eq:expand} and the above identities.
\end{proof}

\begin{corollary}\label{cor:appell}
  With the same notations of Lemma \ref{lem:appell}, denoting by
  \begin{equation}\label{eq:sy}
     y=:\frac{\sqrt{\alpha\beta}x}{\alpha(1-t)+\beta t},
     \qquad
     s=:\frac{\beta t}{\alpha(1-t)+\beta t},
  \end{equation}
   we have, for any $\gamma\in\R$,
  \begin{align}
    \label{eq:corappell1}
    &
    \left\|e^{\gamma|\cdot|^2}\widetilde u(\cdot,t)\right\|
    _{L^2}
    =\left\|e^{\left[\frac{\gamma\alpha\beta}{(\alpha s
    +\beta(1-s))^2}+
    \frac{(\alpha-\beta)a}{4(a^2+b^2)(\alpha s+\beta(1-s))}\right]|\cdot|^2}u(\cdot,s)\right\|_{L^2}
    \\
    \label{eq:corappell2}
    &
    \left\|e^{\gamma|\cdot|^2}\widetilde F(\cdot,t)\right\|
    _{L^2}
    =
    \frac{\alpha\beta}{(\alpha(1-t)+\beta t)^2}
    \left\|e^{\left[\frac{\gamma\alpha\beta}{(\alpha s
    +\beta(1-s))^2}+
    \frac{(\alpha-\beta)a}{4(a^2+b^2)(\alpha s+\beta(1-s))}\right]|\cdot|^2}F(\cdot,s)\right\|_{L^2}
    \\
    \label{eq:corappell3}
    &
    \left\|\sqrt{t(1-t)}e^{\gamma|x|^2}
    \nabla_{\widetilde A}\widetilde u\right\|_{L^2(\R^n\times[0,1])}
    =
    \left\|\sqrt{s(1-s)}e^{\left[\frac{\gamma\alpha\beta}
    {(\alpha s
    +\beta(1-s))^2}+
    \frac{(\alpha-\beta)a}{4(a^2+b^2)(\alpha s+\beta(1-s))}\right]|y|^2}\right.
    \\
    \nonumber
    &
    \qquad\qquad\qquad\qquad\qquad \left.\times
    \left(\frac{\alpha s+\beta(1-s)}{\sqrt{\alpha\beta}}\nabla_Au
    +\frac{(\alpha-\beta)y}{2(a+ib)\sqrt{\alpha\beta}}u
    \right)\right\|_{L^2(\R^n\times[0,1])}
    \\
    \label{eq:corappell4}
    &
    \left\|\sqrt{t(1-t)}e^{\gamma|x|^2}
    |x|\widetilde u\right\|_{L^2(\R^n\times[0,1])}
    \\
    \nonumber
    & \ \ \
    =
    \left\|\sqrt{s(1-s)}e^{\left[\frac{\gamma\alpha\beta}
    {(\alpha s
    +\beta(1-s))^2}+
    \frac{(\alpha-\beta)a}{4(a^2+b^2)(\alpha s+\beta(1-s))}\right]|y|^2}\frac{|y|\sqrt{\alpha\beta}}
    {\alpha s+\beta(1-s)}u
    \right\|_{L^2(\R^n\times[0,1])}.
  \end{align}
\end{corollary}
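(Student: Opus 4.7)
The plan is to verify each of the four identities \eqref{eq:corappell1}--\eqref{eq:corappell4} by direct change of variables $(x,t)\mapsto(y,s)$, inserting into each $L^2$ norm the representations supplied by Lemma \ref{lem:appell} and using a handful of algebraic identities that follow from the definitions $y=g(t)x$, $s=\beta t/(\alpha(1-t)+\beta t)$, with $g(t):=\sqrt{\alpha\beta}/(\alpha(1-t)+\beta t)$. First I would record the relations
\[
\alpha(1-t)+\beta t=\frac{\alpha\beta}{\alpha s+\beta(1-s)},\qquad g(t)\bigl(\alpha(1-t)+\beta t\bigr)=\sqrt{\alpha\beta},
\]
whence $g(t)^2=(\alpha s+\beta(1-s))^2/(\alpha\beta)$ and $t(1-t)=\alpha\beta\, s(1-s)/(\alpha s+\beta(1-s))^2$. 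A direct computation gives $ds/dt=g(t)^2$, so the Jacobian of $(x,t)\mapsto(y,s)$ equals $g^{n+2}$ and $dx\,dt=g^{-(n+2)}\,dy\,ds$. The exponential weight arising in each right-hand side is then produced by the single identity
\[
2\gamma|x|^2+2\,\Re\,h(x,t)=2|y|^2\left[\frac{\gamma\alpha\beta}{(\alpha s+\beta(1-s))^2}+\frac{(\alpha-\beta)a}{4(a^2+b^2)(\alpha s+\beta(1-s))}\right],
\]
obtained from $|x|^2=|y|^2/g^2$ and $\Re[1/(a+ib)]=a/(a^2+b^2)$, where $h$ is the Gaussian phase appearing in \eqref{eq:appell}.

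With these preparations, identity \eqref{eq:corappell1} follows upon inserting $|\widetilde u|^2=g^n e^{2\Re\,h}|u(y,s)|^2$ into the squared norm and changing the spatial variable $y=g(t)x$ at fixed $t$, since $dx=g^{-n}\,dy$ cancels the factor $g^n$. Identity \eqref{eq:corappell2} follows in the same manner from $|\widetilde F|^2=g^{n+4}e^{2\Re\,h}|F|^2$, the residual $g^4$ producing the prefactor $\alpha\beta/(\alpha(1-t)+\beta t)^2$ after taking the square root. Identity \eqref{eq:corappell4} uses $|x|\widetilde u=(|y|/g)\,g^{n/2}e^h u$, with $|y|/g$ simplifying to $|y|\sqrt{\alpha\beta}/(\alpha s+\beta(1-s))$ by the second displayed identity above.

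The heart of the argument is \eqref{eq:corappell3}. By the chain rule at fixed $t$, together with $\widetilde A(x,t)=g\,A(y,s)$, the contributions $-i\widetilde A\widetilde u=-igA\widetilde u$ and $g\,\nabla_y u$ combine into a covariant derivative and produce
\[
\nabla_{\widetilde A}\widetilde u=g^{n/2}e^h\bigl[g\,\nabla_A u+u\,\nabla_x h\bigr].
\]
Evaluating $\nabla_x h$ at $x=y/g$ and invoking $g(\alpha(1-t)+\beta t)=\sqrt{\alpha\beta}$ gives $\nabla_x h=(\alpha-\beta)y/[2(a+ib)\sqrt{\alpha\beta}]$, while $g=(\alpha s+\beta(1-s))/\sqrt{\alpha\beta}$ converts $g\,\nabla_A u$ into $\bigl((\alpha s+\beta(1-s))/\sqrt{\alpha\beta}\bigr)\nabla_A u$; this recovers precisely the vector-valued expression inside the right-hand norm of \eqref{eq:corappell3}. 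Squaring, multiplying by $t(1-t)\,e^{2\gamma|x|^2}$, and performing the full change of variables $dx\,dt=g^{-(n+2)}\,dy\,ds$ produces the $\sqrt{s(1-s)}$ weight on the right-hand side. The only real obstacle is the careful algebraic bookkeeping of the various powers of $g$, $\alpha(1-t)+\beta t$, and $\alpha s+\beta(1-s)$ arising from the Jacobian, from differentiation of the Gaussian phase $e^h$, and from the normalising factor $g^{n/2}$ in $\widetilde u$; no analytic difficulties arise.
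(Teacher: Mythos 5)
Your strategy (direct change of variables $(x,t)\mapsto(y,s)$, inserting the representations of $\widetilde u$, $\widetilde F$, $\nabla_{\widetilde A}\widetilde u$ from Lemma \ref{lem:appell}) is exactly what the paper intends -- it omits the proof as ``straightforward'' -- and your preparatory identities are all correct: the relation $\alpha(1-t)+\beta t=\alpha\beta/(\alpha s+\beta(1-s))$, the formulas $g^2=(\alpha s+\beta(1-s))^2/(\alpha\beta)$ and $t(1-t)=\alpha\beta\,s(1-s)/(\alpha s+\beta(1-s))^2$, the Jacobian $ds/dt=g^2$, the exponent identity matching $\gamma|x|^2+\Re h$ with the bracketed weight, the evaluation $\nabla_xh=(\alpha-\beta)y/[2(a+ib)\sqrt{\alpha\beta}]$, and the covariant derivative formula $\nabla_{\widetilde A}\widetilde u=g^{n/2}e^h[g\nabla_Au+u\nabla_xh]$. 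In particular your verification of the two fixed-time identities \eqref{eq:corappell1} and \eqref{eq:corappell2} is complete: there only the spatial substitution enters, and $g^{n}$ from the amplitude cancels exactly against $dx=g^{-n}dy$.

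The gap is in the space-time identities \eqref{eq:corappell3} and \eqref{eq:corappell4}, i.e.\ precisely in what you set aside as ``algebraic bookkeeping''. Count the powers of $g$: the amplitude contributes $g^{n}$ (from $|\widetilde u|^2$ or $|\nabla_{\widetilde A}\widetilde u|^2$), the weight $t(1-t)=s(1-s)/g^2$ contributes $g^{-2}$, and the Jacobian $dx\,dt=g^{-(n+2)}dy\,ds$ contributes $g^{-(n+2)}$; the net factor in the squared integrand is $g^{n-2-(n+2)}=g^{-4}$, not $g^{0}$. Equivalently, the normalization $g^{n/2}$ makes the Appell map an isometry of $L^2(dx)$ at each fixed time but \emph{not} of $L^2(dx\,dt)$, because the time dilation $dt=g^{-2}ds$ is left uncompensated. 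Carrying out your own plan therefore yields, writing $\kappa(s)$ for the exponent in square brackets on the right-hand sides,
\begin{equation*}
  \left\|\sqrt{t(1-t)}e^{\gamma|x|^2}|x|\widetilde u\right\|_{L^2(\R^n\times[0,1])}
  =
  \left\|\sqrt{s(1-s)}\,e^{\kappa(s)|y|^2}\,
  \frac{\alpha\beta}{(\alpha s+\beta(1-s))^2}\cdot
  \frac{|y|\sqrt{\alpha\beta}}{\alpha s+\beta(1-s)}\,u\right\|_{L^2(\R^n\times[0,1])},
\end{equation*}
with an extra factor $\alpha\beta/(\alpha s+\beta(1-s))^2=g^{-2}$ inside the norm, and the same factor appears in \eqref{eq:corappell3}. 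So your assertion that the substitution ``produces the $\sqrt{s(1-s)}$ weight on the right-hand side'' and nothing else is false, and no rearrangement of the computation can remove the factor, since it measures exactly the failure of space-time unitarity. (As printed, \eqref{eq:corappell3}--\eqref{eq:corappell4} should carry this factor; this is harmless for the paper's later applications because $\min(\alpha,\beta)\le\alpha s+\beta(1-s)\le\max(\alpha,\beta)$ makes it comparable to a constant $N(\alpha,\beta)$, but an \emph{identity} it is not.) To repair the proposal, redo the $g$-count as above and state \eqref{eq:corappell3}--\eqref{eq:corappell4} with the corrective factor, or weaken them to two-sided bounds with constants depending only on $\alpha,\beta$, which is all that Steps III--IV of the paper actually need.
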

We omit here the details of the proof of the previous Corollary, which are straightforward after Lemma \ref{lem:appell}.

\subsection{Logarithmic convexity}\label{subsec:logconv}
We now pass to study, from an abstract point of view, the evolution of weighted solutions to \eqref{eq:1} with gaussian weights.

\begin{lemma}\label{lem:A1}
  Let $u=u(x,t):\R^{n+1}\to\C$ be a solution to \eqref{eq:1}
where $a,b\in\R$, $A=A(x,t):\R^{n+1}\to\R^n$, $V,F:\R^{n+1}\to\C$, and denote by $v:=e^\varphi u$, with $\varphi=\varphi(x,t):\R^{n+1}\to\R$. Then $v$ solves
\begin{equation}\label{eq:A2}
  \partial_tv=\left(\mathcal S+\mathcal A\right)v
  +(a+ib)\left(V(x,t)v+e^{\varphi}F(x,t)\right),
\end{equation}
where
\begin{align}
\label{eq:S}
  \mathcal S
  &
  =
  a\left(\Delta_A+|\nabla_x\varphi|^2\right)
  -ib\left(\Delta_x\varphi+2\nabla_x\varphi\cdot\nabla_A\right)
  +\varphi_t
  \\
  \label{eq:A}
  \mathcal A
  &
  =
  ib
  \left(\Delta_A+|\nabla_x\varphi|^2\right)
  -a\left(\Delta_x\varphi+2\nabla_x\varphi\cdot\nabla_A\right).
\end{align}
In addition, the following identities hold:
\begin{align}
  \label{eq:St}
  \mathcal S_t
  &
  =2a\left(\Im A_t\cdot\nabla_A
  +\nabla_x\varphi\cdot\nabla_x\varphi_t\right)
  +2b\left(\Im\nabla_x\varphi_t\cdot\nabla_A-\nabla_x\varphi
  \cdot A_t\right) +\varphi_{tt}
  \\
  \label{eq:commutator}
  \int_{\R^n}[\mathcal S,\mathcal A]f\,\overline f\,dx
  &
  =(a^2+b^2)\left(4\int_{\R^n}\nabla_Af\cdot D^2_x\varphi\overline{\nabla_Af}\,dx
  -\int_{\R^n}|f|^2\Delta^2_x\varphi\,dx\right.
  \\
  \nonumber
  & \ \ \ \left.
  +4\int_{\R^n}|f|^2
  \nabla_x\varphi\cdot D^2_x\varphi\nabla_x\varphi\,dx
  -4\Im\int_{\R^n}f(\nabla_x\varphi)^t B\cdot\overline{\nabla_Af}\,dx
  \right)
  \\
  \nonumber
  & \ \ \
  +2b\Im\int_{\R^n}\overline{f}\nabla_x\varphi_t\cdot\nabla_A f\,dx+2a\int_{\R^n}|f|^2\nabla_x\varphi\cdot
  \nabla_x\varphi_t
  \,dx,
\end{align}
where $\mathcal S_t:=(\partial_t\mathcal S)$, $(D^2_x\varphi)_{jk}=\frac{\partial^2\varphi}
{\partial x_j\partial x_k}$, $\Delta^2_x\varphi:=\Delta_x(\Delta_x\varphi)$, $B=DA-DA^t$, $B_{jk}=A^k_j-A^j_k$ and $[\mathcal S,\mathcal A]=\mathcal S\mathcal A-\mathcal A\mathcal S$ denotes the commutator between $\mathcal S$ and $\mathcal A$.
\end{lemma}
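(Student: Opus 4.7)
The lemma is a direct algebraic computation; I would proceed in three stages.

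\textbf{Step 1 (equation for $v=e^\varphi u$).} I would substitute $u=e^{-\varphi}v$ into \eqref{eq:1}, using the conjugation identities
\begin{equation*}
\nabla_A(e^{-\varphi}v)=e^{-\varphi}(\nabla_Av-v\,\nabla_x\varphi),\qquad
\Delta_A(e^{-\varphi}v)=e^{-\varphi}\bigl[\Delta_Av-2\nabla_x\varphi\cdot\nabla_Av+(|\nabla_x\varphi|^2-\Delta_x\varphi)v\bigr].
\end{equation*}
After multiplying through by $e^\varphi$, the resulting operator on $v$ is regrouped according to the real and imaginary parts of its coefficients; this yields \eqref{eq:S}–\eqref{eq:A}. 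The grouping is natural because $\mathcal{S}$ is formally self-adjoint and $\mathcal{A}$ anti-self-adjoint on $L^2$, as one checks using $(\nabla_x\varphi\cdot\nabla_A)^*=-\Delta_x\varphi-\nabla_x\varphi\cdot\nabla_A$, which forces $\Delta_x\varphi+2\nabla_x\varphi\cdot\nabla_A$ to be anti-self-adjoint.

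\textbf{Step 2 (derivative $\mathcal{S}_t$).} I would differentiate \eqref{eq:S} term by term, using $\partial_t\nabla_A=-iA_t$ and $\partial_t\Delta_A=-2iA_t\cdot\nabla_A-i\,\mathrm{div}_xA_t$. Collecting contributions according to the coefficients $a$ and $b$ then gives \eqref{eq:St} (with the various factors of $i$ turning real coefficients into imaginary ones, as in Step 1).

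\textbf{Step 3 (commutator) and main difficulty.} Set $X_1:=\Delta_A+|\nabla_x\varphi|^2$ and $X_2:=\Delta_x\varphi+2\nabla_x\varphi\cdot\nabla_A$, so that $\mathcal{S}=aX_1-ibX_2+\varphi_t$ and $\mathcal{A}=ibX_1-aX_2$. Same-block commutators vanish, so
\begin{equation*}
[\mathcal{S},\mathcal{A}]=-(a^2+b^2)[X_1,X_2]+[\varphi_t,\,ibX_1-aX_2],
\end{equation*}
which already explains the single factor $a^2+b^2$ appearing in \eqref{eq:commutator}. The $\varphi_t$-bracket reduces using $[\varphi_t,\Delta_A]=-\Delta_x\varphi_t-2\nabla_x\varphi_t\cdot\nabla_A$ and $[\varphi_t,\nabla_x\varphi\cdot\nabla_A]=-\nabla_x\varphi\cdot\nabla_x\varphi_t$; after pairing with $\bar f$ (the left-hand side of \eqref{eq:commutator} is automatically real since $[\mathcal{S},\mathcal{A}]$ is self-adjoint), it yields the last two terms of \eqref{eq:commutator}. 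The main bracket $[X_1,X_2]$ splits into four pieces: $[|\nabla_x\varphi|^2,\Delta_x\varphi]=0$ trivially; $[|\nabla_x\varphi|^2,2\nabla_x\varphi\cdot\nabla_A]$ is multiplication by $-4\nabla_x\varphi\cdot D_x^2\varphi\,\nabla_x\varphi$; $[\Delta_A,\Delta_x\varphi]$ follows from the standard identity $[\Delta_A,f]=\Delta f+2\nabla f\cdot\nabla_A$ and contributes the bi-Laplacian $\Delta_x^2\varphi$; and the delicate piece is $[\Delta_A,2\nabla_x\varphi\cdot\nabla_A]$, for which the magnetic commutator
\begin{equation*}
[\nabla_A^j,\nabla_A^k]=-iB_{jk}
\end{equation*}
is essential. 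It produces both the Hessian quadratic form $4\nabla_Af\cdot D_x^2\varphi\,\overline{\nabla_Af}$ (after integrating by parts so that one covariant derivative falls on $\bar f$) and the magnetic term $-4\,\Im\!\int f(\nabla_x\varphi)^tB\cdot\overline{\nabla_Af}\,dx$, the imaginary part being forced by the antisymmetry of $B$. The main obstacle is precisely this bookkeeping: the various first-order remainders from $[\Delta_A,\Delta_x\varphi]$ and the double-commutator manipulations must cancel, and the magnetic contribution must be cleanly disentangled from the Hessian quadratic form by exploiting the antisymmetry of $B$.
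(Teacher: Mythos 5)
Your proposal is correct and follows exactly the route the paper intends: the paper omits the proof as ``explicit computations,'' pointing to \cite{FV} only for the delicate term $[\Delta_A,\Delta_x\varphi+2\nabla_x\varphi\cdot\nabla_A]$, which is precisely the piece you isolate and handle via the magnetic commutator $[\partial_A^j,\partial_A^k]=-iB_{jk}$. Your conjugation identities, the splitting $\mathcal S=aX_1-ibX_2+\varphi_t$, $\mathcal A=ibX_1-aX_2$, and the cancellation of first-order remainders against the integrated-by-parts Hessian term all check out, so the sketch is a faithful (indeed more detailed) version of the paper's omitted argument.
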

Notice that $\mathcal S$ is a symmetric operator and $\mathcal A$ is skew-symmetric, with respect to the inner product in $L^2$.
The proof of Lemma \ref{lem:A1} is based on explicit computations and will be omitted. We mention the paper \cite{FV} for the computation of $[\Delta_A,\Delta_x\varphi+2\nabla_x\varphi\cdot\nabla_A]$, which is the only term in $[\mathcal S,\mathcal A]$ one has to compute with a bit of care.

We now prove a dissipation result for equation \eqref{eq:1}, which depends on the fact that $a>0$, and which permits to justify the proofs of the results in the sequel.

\begin{lemma}\label{lem:dissipation}
  Let $-\Delta_A$ be self-adjoint in $L^2$ and let $u\in L^\infty\left([0,1];L^2(\R^n)\right)\cap
  L^2\left([0,1]; H^1(\R^n)\right)$ be a solution to
  \begin{equation}\label{eq:A3}
  \partial_tu=(a+ib)\left(\Delta_Au+V(x,t)u+F(x,t)\right),
\end{equation}
in $\R^n\times[0,1]$, with $a>0$, $b\in\R$, $A=A(x):\R^n\to\R^n$, and $V,F:\R^{n+1}\to\C$.
Then, for any $\gamma\geq0$, $T\in[0,1]$, we have
\begin{align}\label{eq:dissipation}
  &
  e^{-M_T}\left\|e^{\frac{\gamma a}{a+4\gamma(a^2+b^2)T}|\cdot|^2}u(\cdot,T)\right\|
  _{L^2}
  \\
  \nonumber
  & \ \ \
  \leq
  \|e^{\gamma|\cdot|^2}u(\cdot,0)\|_{L^2}
  +\sqrt{a^2+b^2}\left\|e^{\frac{\gamma a}{a+4\gamma(a^2+b^2)t}|x|^2}F(x,t)\right\|
  _{L^1([0,T];L^2(\R^n))},
\end{align}
with $M_T:=\|a(\Re V)^+-b\Im V\|_{L^1([0,T];L^\infty(\R^n))}$, $(\Re V)^+$ being the positive part of $\Re V$.
\end{lemma}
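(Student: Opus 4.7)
The plan is to conjugate the equation by the gaussian weight, exploit the symmetric/skew-symmetric decomposition of Lemma \ref{lem:A1}, and absorb the only non-trivial (magnetic) term via Young's inequality; this reduces matters to a scalar Gronwall inequality for $\|e^\phi u\|_{L^2}$.

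First, I set $\phi(x,t):=\alpha(t)|x|^2$ where
\begin{equation*}
  \alpha(t):=\frac{\gamma a}{a+4\gamma(a^2+b^2)t}
\end{equation*}
satisfies $\alpha(0)=\gamma$ and the Riccati-type identity $\alpha'+\frac{4(a^2+b^2)}{a}\alpha^2=0$. With $v:=e^\phi u$, Lemma \ref{lem:A1} gives $\partial_tv=(\mathcal S+\mathcal A)v+(a+ib)(Vv+e^\phi F)$. Since $\mathcal S$ is symmetric and $\mathcal A$ skew-symmetric in $L^2$, multiplying by $\overline v$, integrating, and taking real parts yields
\begin{equation*}
  \tfrac12\tfrac{d}{dt}\|v\|_{L^2}^2=\langle\mathcal Sv,v\rangle+\Re\langle(a+ib)Vv,v\rangle+\Re\langle(a+ib)e^\phi F,v\rangle.
\end{equation*}

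The core computation is $\langle\mathcal Sv,v\rangle$. Using \eqref{eq:S}, $\langle\Delta_Av,v\rangle=-\|\nabla_Av\|_{L^2}^2$, and the identity $\Im(\nabla_Av\,\overline v)=\Im(\nabla v\,\overline v)-A|v|^2$, a short integration by parts shows that the $-ib\int\Delta\phi|v|^2$ and $-2ib\int\nabla\phi\cdot\nabla_Av\,\overline v$ contributions combine into the single real expression $2b\int\nabla\phi\cdot\Im(\nabla_Av\,\overline v)$, so with $\phi=\alpha|x|^2$ we obtain
\begin{equation*}
  \langle\mathcal Sv,v\rangle=-a\|\nabla_Av\|_{L^2}^2+(4a\alpha^2+\alpha')\int|x|^2|v|^2\,dx+4b\alpha\int x\cdot\Im(\nabla_Av\,\overline v)\,dx.
\end{equation*}
The cross term is the only non-trivial one, and here the hypothesis $a>0$ is essential: Cauchy--Schwarz and Young's inequality give
\begin{equation*}
  \Big|4b\alpha\int x\cdot\Im(\nabla_Av\,\overline v)\,dx\Big|\leq a\|\nabla_Av\|_{L^2}^2+\frac{4b^2\alpha^2}{a}\int|x|^2|v|^2\,dx,
\end{equation*}
so that the gradient terms cancel and
\begin{equation*}
  \langle\mathcal Sv,v\rangle\leq\Big(\frac{4(a^2+b^2)}{a}\alpha^2+\alpha'\Big)\int|x|^2|v|^2\,dx=0,
\end{equation*}
by the defining ODE for $\alpha$. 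The remaining two terms are standard: $\Re[(a+ib)V]=a\Re V-b\Im V\leq a(\Re V)^+-b\Im V$ (using $a>0$), so $\Re\langle(a+ib)Vv,v\rangle\leq\|a(\Re V)^+-b\Im V\|_{L^\infty}\|v\|_{L^2}^2$, while Cauchy--Schwarz gives $|\Re\langle(a+ib)e^\phi F,v\rangle|\leq\sqrt{a^2+b^2}\|e^\phi F\|_{L^2}\|v\|_{L^2}$. Dividing by $\|v\|_{L^2}$ produces the linear inequality
\begin{equation*}
  \tfrac{d}{dt}\|v\|_{L^2}\leq\|a(\Re V)^+-b\Im V\|_{L^\infty}\|v\|_{L^2}+\sqrt{a^2+b^2}\|e^\phi F\|_{L^2},
\end{equation*}
to which Gronwall, integrated from $0$ to $T$, yields exactly \eqref{eq:dissipation}.

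The main obstacle will be rigour: the computation implicitly requires $|x|v,\nabla_Av\in L^2$ at each time, which is not granted by the hypothesis $u\in L^\infty_tL^2_x\cap L^2_tH^1_x$ alone. To justify it I would run the whole argument with $\phi$ replaced by a bounded truncation (for instance $\phi_R(x,t):=\alpha(t)\psi_R(x)$ with a smooth cutoff $\psi_R\nearrow|x|^2$ of polynomial growth), where every weighted norm is finite and the symmetric/skew-symmetric structure of Lemma \ref{lem:A1} still applies, and then pass to the limit $R\to\infty$ using Fatou's lemma, which preserves the direction of the desired inequality.
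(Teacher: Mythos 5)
Your proposal is correct and follows essentially the same route as the paper's own proof: conjugation by the gaussian weight via Lemma \ref{lem:A1}, an energy estimate in which the skew-symmetric part drops out, absorption of the magnetic cross term by Cauchy--Schwarz/Young (using $a>0$), the same time-dependent weight chosen so that $\varphi_t=-\bigl(a+\tfrac{b^2}{a}\bigr)|\nabla_x\varphi|^2$ (your Riccati identity for $\alpha$ is exactly this condition), and Gronwall. Your truncation-and-limit justification at the end is also the same device the paper uses (truncated, mollified weights, then $R\to\infty$), so there is nothing substantive to flag.
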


\begin{proof}

 The proof is based on a standard energy method. First notice that, since $-\Delta_A$ is self-adjoint, solutions $u\in L^\infty\left([0,1];L^2(\R^n)\right)\cap
  L^2\left([0,1]; H^1(\R^n)\right)$ to \eqref{eq:A3} do exist by means of the Duhamel principle.

Let $v=e^{\varphi(x,t)}u$, satisfying
 \eqref{eq:A2} by Lemma \ref{lem:A1}.
 Formally, multiplying \eqref{eq:A2} by $\overline v$, integrating in $dx$ and taking the real parts, we obtain by \eqref{eq:S}, \eqref{eq:A} that
  \begin{align}\label{eq:0}
    \frac12\frac{d}{dt}\|v\|_{L^2}^2
   &
    =
    \Re\int\mathcal Sv\,\overline v\,dx
    +\Re\left\{(a+ib)\int\left(|v|^2V+e^\varphi F\overline v\right)\,dx\right\}
    \\
    \nonumber
    &
    =
    -a\int|\nabla_Av|^2\,dx
    +a\int|\nabla_x\varphi|^2|v|^2\,dx
    +\int\varphi_t|v|^2\,dx
    \\
    &
    \nonumber
    \ \ \
    +2b\Im\int\overline v\nabla_x\varphi\cdot\nabla_A v\,dx
    +\Re(a+ib)\int\left(|v|^2V+e^\varphi F\overline v\right)\,dx.
  \end{align}
  We can easily estimate
  \begin{align}\label{eq:V}
    \Re(a+ib)\int|v|^2V\,dx
    &
   \leq
    \left\|a(\Re V)^+-b(\Im V)\right\|_{L^\infty}\|v\|_{L^2}^2
    \\
    \label{eq:F}
    \Re(a+ib)\int e^\varphi F\overline v\,dx
    &
    \leq
    \sqrt{a^2+b^2}\left\|e^\varphi F\right\|_{L^2}
    \|v\|_{L^2}.
  \end{align}
  Analogously, by Cauchy-Schwartz we have
  \begin{equation}\label{eq:CS}
    2b\Im\int\overline v\nabla_x\varphi\cdot\nabla_A v\,dx
    \leq
    a\int|\nabla_Av|^2\,dx+\frac{b^2}{a}\int|\nabla_x\varphi|^2
    |v|^2\,dx;
  \end{equation}
  as a consequence, by \eqref{eq:0} and \eqref{eq:CS} we obtain
  \begin{equation}\label{eq;re}
    \Re\int\mathcal Sv\,\overline v\,dx
    \leq
    \int\left\{\left(a+\frac{b^2}{a}\right)|\nabla_x\varphi|^2
    +\varphi_t\right\}|v|^2\,dx,
  \end{equation}
  and the choice
  \begin{equation}\label{eq:phi}
   \varphi(x,t)=\frac{\gamma a}{a+4\gamma(a^2+b^2)t}|x|^2
    \qquad
    \Rightarrow
    \qquad
    \varphi_t(x,t)=-\left(a+\frac{b^2}{a}\right)|\nabla_x\varphi|^2
  \end{equation}
  gives in turn that
  \begin{equation}\label{eq:Res}
    \Re\int\mathcal Sv\,\overline v\,dx
    \leq
   0.
  \end{equation}
  By \eqref{eq:0}, \eqref{eq:V}, \eqref{eq:F}, \eqref{eq:Res}, with the choice \eqref{eq:phi}, we finally obtain
  \begin{align*}
  &
    \frac{d}{dt}\|v(\cdot, t)\|_{L^2}^2
    \\
    &
    \leq
    2\left\|a(\Re V)^+-b(\Im V)\right\|_{L^\infty}\|v(\cdot, t)\|_{L^2}^2
    +
    2\sqrt{a^2+b^2}\left\|e^\varphi F\right\|_{L^2}
    \|v(\cdot, t)\|_{L^2},
  \end{align*}
  which implies \eqref{eq:dissipation}.

  In order to make the previous argument rigorous, since the exponentially weighted $L^2$-norms involved in the integration by parts are not finite in principle, it is sufficient to work with truncated and mollified weights of the following form:
  \begin{equation*}
    \varphi_R(x,t)
    =
    \begin{cases}
      \varphi(x,t),
      \qquad
      \text{if }|x|<R
      \\
      \varphi(R,t),
      \qquad
      \text{if }|x|\geq R,
    \end{cases}
    \qquad
    \varphi_{R,\epsilon}:=
    (\theta_{\epsilon}\ast\varphi_R)(x),
  \end{equation*}
  $\theta_\epsilon(x)$ being a radial mollifier.
  Then the result can be obtained by performing the same computation as above and
  then letting $\epsilon$ go to 0 and $R$ to $\infty$; we omit straightforward details.
  \end{proof}
  \begin{remark}\label{rem:dissipation}
    Notice that the dissipation estimate \eqref{eq:dissipation} has been proved for stationary magnetic potentials $A=A(x)$. In the time-dependent case $A=A(x,t)$, the same result would require some additional assumptions on the time derivative $A_t$, since we need the self-adjointness property, which at this level seem quite artificial.  \end{remark}

The next result, proved by Escauriaza, Kenig, Ponce and Vega in \cite{EKPV1,EKPV2}, is the abstract core of Theorem \ref{thm:3}. It is concerned with the connection between the positivity of $\mathcal S_t+[\mathcal S,\mathcal A]$ and the logarithmic convexity of weighted $L^2$-norms with gaussian weights.

\begin{lemma}[logarithmic convexity]
\label{lem:logconv}
  Let $\mathcal S$ be a symmetric operator, $\mathcal A$ a skew-symmetric one, both with coefficients depending on $x$ and $t$, $f=f(x,t):\R^{n+1}\to\C$ be a sufficiently regular function, $G$ a positive function, and denote by
  \begin{equation}\label{eq:HDN}
    H(t)=\int_{\R^n}|f|^2\,dx.
  \end{equation}
  Assume that
  \begin{equation}\label{eq:logconv1}
    \left|\partial_tf-(\mathcal S+\mathcal A)f\right|
    \leq M_1|f|+G\ \ \text{in }\R^n\times[0,1],
    \qquad
    \mathcal S_t+[\mathcal S,\mathcal A]\geq-M_0,
  \end{equation}
  for some $M_0,M_1\geq0$ and
  \begin{equation}\label{eq:logconv2}
    M_2:=\sup_{t\in[0,1]}\frac{\|G(t)\|_{L^2}}
    {\|f(t)\|_{L^2}}<\infty.
  \end{equation}
  Then the function $\psi(t):=\log H(t)$ is convex in $[0,1]$. In particular, if
  \begin{equation}\label{eq:rigorous}
    H(0)<\infty
    \quad
    \Rightarrow
    \quad
    H(t)<\infty\ \text{for any }t\in[0,1],
  \end{equation}
  then there exist a universal constant $N\geq0$ such that
  \begin{equation}\label{eq:logconv}
    H(t)\leq e^{N(M_0+M_1+M_2+M_1^2+M_2^2)}
    H(0)^{1-t}H(1)^t,
  \end{equation}
  for any $t\in[0,1]$.
\end{lemma}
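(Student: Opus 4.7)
Set $H(t) := \|f\|_{L^2}^2$, $\psi := \log H$, $D(t) := \langle \mathcal{S}f, f\rangle$, and $Ef := \partial_t f - (\mathcal{S}+\mathcal{A})f$, so by \eqref{eq:logconv1}--\eqref{eq:logconv2} we have $\|Ef\|_{L^2}^2 / H \leq 2(M_1^2 + M_2^2)$. Using the symmetry of $\mathcal{S}$ (so that $D \in \R$) and skew-symmetry of $\mathcal{A}$ (so that $\langle \mathcal{A} f, f\rangle \in i\R$), one gets $H'(t) = 2 D(t) + 2 \Re \langle Ef, f\rangle$. Differentiating $D$ along the equation and invoking the identity $2\Re\langle \mathcal{S} \mathcal{A} f, f\rangle = \langle [\mathcal{S}, \mathcal{A}] f, f\rangle$ (which uses both the symmetry of $\mathcal{S}$ and the skew-symmetry of $\mathcal{A}$) gives
\[
D'(t) = \langle (\mathcal{S}_t + [\mathcal{S}, \mathcal{A}]) f, f\rangle + 2 \|\mathcal{S} f\|_{L^2}^2 + 2 \Re\langle Ef, \mathcal{S} f\rangle,
\]
and the hypothesis $\mathcal{S}_t + [\mathcal{S}, \mathcal{A}] \geq -M_0$ bounds the first term below by $-M_0 H$.

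The crucial step is a monotonicity property for the ``frequency'' $N := D/H$. The Cauchy--Schwarz remainder identity $\|\mathcal{S} f - N f\|_{L^2}^2 = \|\mathcal{S} f\|_{L^2}^2 - N^2 H$ combined with AM--GM on the cross term $2 \Re \langle Ef, \mathcal{S} f - N f\rangle$ yields
\[
N'(t) \geq -M_0 + \frac{\|\mathcal{S} f - N f\|_{L^2}^2}{H} - \frac{\|Ef\|_{L^2}^2}{H} \geq -(M_0 + 2 M_1^2 + 2 M_2^2) =: -C.
\]

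To convert this into the log-convexity \eqref{eq:logconv}, I would write $\psi'(t) = 2 N(t) + R(t)$, where $|R(t)| \leq 2\sqrt{2}(M_1 + M_2)$ by Cauchy--Schwarz applied to $\langle Ef, f\rangle$. A maximum principle argument applied to
\[
\Phi(t) := \psi(t) - (1-t) \psi(0) - t \psi(1) - K t (1-t),
\]
with $K$ a suitable absolute multiple of $C + M_1 + M_2$, rules out an interior maximum of $\Phi$: the one-sided bound on $N'$ together with the boundedness of $R$ produces an effective convexity of $\psi$ strong enough to contradict $\Phi''(t_0) \leq 0$ at any hypothetical maximum $t_0 \in (0,1)$. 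Combined with $\Phi(0) = \Phi(1) = 0$ (finite by \eqref{eq:rigorous}), this gives $\psi(t) \leq (1-t)\psi(0) + t \psi(1) + K/4$, which is \eqref{eq:logconv} after exponentiating.

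The principal technical obstacle is the rigorous justification of these formal manipulations — the differentiations under the integral, the integration by parts in the commutator identity, and the use of $\|\mathcal{S} f\|_{L^2}$ — since $f$ is only ``sufficiently regular'' and the relevant weighted norms need not be finite a priori. This is handled by the truncation and mollification scheme used in the proof of Lemma \ref{lem:dissipation}, coupled with assumption \eqref{eq:rigorous} to guarantee $H(t) < \infty$ throughout $[0,1]$. A secondary delicacy is the passage from the one-sided bound on $N'$ to the convexity of $\psi$: because $\psi' = 2N + R$ with no direct control on $R'$, the contribution $M_1 + M_2$ (and not just $M_1^2 + M_2^2$) must appear linearly in the exponent of \eqref{eq:logconv}.
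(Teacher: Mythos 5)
Your scheme is the right one, and it is in fact the same as the proof the paper relies on: the paper does not prove Lemma \ref{lem:logconv} itself, but cites \cite{EKPV1,EKPV2} and records in Remark \ref{rem:rigorous} only the second-derivative identity \eqref{eq:2ekpv2}; the argument in those references is precisely your frequency-function computation. Your identities for $H'$ and $D'$, the algebraic identity $2\Re\langle\mathcal S\mathcal Af,f\rangle=\langle[\mathcal S,\mathcal A]f,f\rangle$, the almost-monotonicity bound $N'\geq-(M_0+2M_1^2+2M_2^2)=:-C$ obtained from the Cauchy--Schwarz remainder and AM--GM, and the bound $|R|\leq2\sqrt2(M_1+M_2)$ for $R:=2\Re\langle Ef,f\rangle/H$ are all correct.

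The one step that would fail as written is the conclusion: you propose to exclude an interior maximum of $\Phi(t)=\psi(t)-(1-t)\psi(0)-t\psi(1)-Kt(1-t)$ by contradicting $\Phi''(t_0)\leq0$. But $\Phi''=2N'+R'+2K$, and $R'$ contains $\partial_t\Re\langle Ef,f\rangle$, i.e.\ a time derivative of $Ef=\partial_tf-(\mathcal S+\mathcal A)f$ on which hypotheses \eqref{eq:logconv1}--\eqref{eq:logconv2} give no information; so $\Phi''(t_0)$ can be neither computed nor bounded below, and no contradiction can be extracted. You notice the missing control on $R'$ yourself, but a maximum principle run through second derivatives is exactly what that missing control forbids. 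The repair is to never differentiate $R$: since $N(t)+Ct$ is nondecreasing, its average over $[0,t]$ is at most $N(t)+Ct$, which in turn is at most its average over $[t,1]$; hence, writing $\psi'=2N+R$ and dropping favorable terms,
\begin{equation*}
  \frac{\psi(t)-\psi(0)}{t}\leq 2N(t)+2Ct+\sup_{[0,1]}|R|,
  \qquad
  \frac{\psi(1)-\psi(t)}{1-t}\geq 2N(t)-2C(1-t)-\sup_{[0,1]}|R|.
\end{equation*}
Multiplying both inequalities by $t(1-t)$ and subtracting gives
\begin{equation*}
  \psi(t)-(1-t)\psi(0)-t\psi(1)
  \leq 2\Bigl(C+\sup_{[0,1]}|R|\Bigr)t(1-t)
  \leq \tfrac12\,C+\tfrac12\sup_{[0,1]}|R|,
\end{equation*}
which exponentiates exactly to \eqref{eq:logconv}, with the linear terms $M_1+M_2$ entering as you predicted. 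Assumption \eqref{eq:rigorous} is used, as you say, to guarantee that $H$, hence $\psi$, is finite on all of $[0,1]$, so that $\psi(0)$, $\psi(1)$ and the above integrations make sense. With this substitution your proof is complete and coincides with that of \cite{EKPV1,EKPV2}.
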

\begin{remark}\label{rem:rigorous}
The proof of Lemma \ref{lem:logconv} is based on the computation of the time derivatives $\dot H(t),\ddot H(t)$. An explicit (formal) computation gives
\begin{align}\label{eq:2ekpv2}
    \frac{d^2}{dt^2}H(t)
    &
    =2\partial_t\Re\int
    \overline v(\partial_t-\mathcal S-\mathcal A)v\,dx
    +2\int\overline v
    (\mathcal S_t+[\mathcal S,A])v\,dx
    \\
    \nonumber
    & \ \ \
    +\left\|\partial_tv-\mathcal Av+\mathcal Sv\right\|_{L^2}^2
    -\left\|\partial_tv-\mathcal Av-\mathcal Sv\right\|_{L^2}^2.
\end{align}
This, together with the computation of the first derivative $\dot H(t)$, shows that, under conditions \eqref{eq:logconv1}, \eqref{eq:logconv2}, the second derivative $\frac{d^2}{dt^2}\log(H(t))$ is positive. Assumption \eqref{eq:rigorous} is then the essential information one needs in order to conclude the convexity inequality \eqref{eq:logconv}. The validity of condition \eqref{eq:rigorous} depends on an energy estimate of the type \eqref{eq:dissipation} and needs to be checked each time when Lemma \ref{lem:logconv} is applied to explicit operators $\mathcal S,\mathcal A$, as we see in the following results.

The proof of Lemma \ref{lem:logconv} can be found in \cite{EKPV1, EKPV2}.
\end{remark}

We can finally prove the main results of this section.
\begin{lemma}\label{lem:3}
  Let $u\in L^\infty\left([0,1];L^2(\R^n)\right)\cap
  L^2\left([0,1]; H^1(\R^n)\right)$ be a solution to
  \begin{equation}\label{eq:A4}
  \partial_tu=(a+ib)\left(\Delta_Au+V(x,t)u+F(x,t)\right),
\end{equation}
in $\R^n\times[0,1]$, with $a>0$, $b\in\R$, $A=A(x,t):\R^{n+1}\to\R^n$, and $V,F:\R^{n+1}\to\C$.
Assume that
\begin{equation}\label{eq:gauge}
  x\cdot A(x)\equiv0\equiv x\cdot A_t(x).
\end{equation}
Moreover, let $\gamma>0$ and assume that
\begin{equation}\label{eq:3V}
  \sup_{t\in[0,1]}\|V(t,\cdot)\|_{L^\infty}:=M_1<\infty,
  \qquad
  \sup_{t\in[0,1]}
  \frac{\left\|e^{\gamma|\cdot|^2}F(\cdot,t)\right\|
  _{L^2}}{\|u(\cdot,t)\|_{L^2}}:=M_2<\infty;
\end{equation}
in addition, denote by $B=B(x,t)=D_xA-D_xA^t$ and assume
\begin{equation}\label{eq:3A}
  \frac{1}{\gamma}\sup_{t\in[0,1]}
  \left\|A_t(\cdot,t)\right\|_{L^\infty}^2
  +4\gamma(a^2+b^2)\sup_{t\in[0,1]}\|x^tB(\cdot,t)\|
  _{L^\infty}^2
  :=M_A<\infty.
\end{equation}
Finally, assume
\begin{equation}\label{eq:3decay}
  \left\|e^{\gamma|\cdot|^2}u(\cdot,0)\right\|_{L^2}
  +\left\|e^{\gamma|\cdot|^2}u(\cdot,1)\right\|_{L^2}
  <\infty;
\end{equation}
finally, define $H(t)=\left\|e^{\gamma|\cdot|^2}u(\cdot,t)\right\|_{L^2}$
and assume that \eqref{eq:rigorous} holds. Then, $H(t)$  is finite and logarithmically convex in $[0,1]$; in particular, there exists a constant $N=N(\gamma,a,b)$ such that
\begin{equation}\label{eq:3tesi}
  H(t)\leq
  e^{N\left[M_A+\sqrt{a^2+b^2}(M_1+M_2)+(a^2+b^2)
  (M_1^2+M_2^2)\right]}
  \left\|e^{\gamma|\cdot|^2}u(\cdot,0)\right\|_{L^2}^{1-t}
  \left\|e^{\gamma|\cdot|^2}u(\cdot,1)\right\|_{L^2}^t,
\end{equation}
for any $t\in[0,1]$.
\end{lemma}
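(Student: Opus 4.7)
The plan is to apply the abstract logarithmic convexity result, Lemma~\ref{lem:logconv}, to the conjugated function $v:=e^{\varphi}u$ with the time-independent weight $\varphi(x,t):=\gamma|x|^2$. First I would invoke Lemma~\ref{lem:A1} to decompose the evolution of $v$ as $\partial_t v=(\mathcal{S}+\mathcal{A})v+(a+ib)(Vv+e^{\varphi}F)$, where $\mathcal{S}$ is symmetric and $\mathcal{A}$ is skew-symmetric in $L^2$. Since $\|v(\cdot,t)\|_{L^2}=H(t)$, the assumed finiteness~\eqref{eq:rigorous} is the rigor-justifying input needed to apply Lemma~\ref{lem:logconv}; it also replaces the use of the dissipation estimate (Lemma~\ref{lem:dissipation}), although the latter is what makes~\eqref{eq:rigorous} plausible in the setting where it is later needed.

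The first hypothesis of Lemma~\ref{lem:logconv} is immediate from the rewritten equation: one gets $|\partial_tv-(\mathcal{S}+\mathcal{A})v|\leq\sqrt{a^2+b^2}\,M_1|v|+G$ with $G:=\sqrt{a^2+b^2}\,|e^{\varphi}F|$, and since $\varphi\geq 0$ gives $\|v\|_{L^2}\geq\|u\|_{L^2}$, the ratio $\|G\|_{L^2}/\|v\|_{L^2}$ is controlled by $\sqrt{a^2+b^2}\,M_2$ via assumption~\eqref{eq:3V}. So in the notation of Lemma~\ref{lem:logconv} one may take $M_1\leftarrow\sqrt{a^2+b^2}M_1$ and $M_2\leftarrow\sqrt{a^2+b^2}M_2$, which explains the scaling of the final constant in~\eqref{eq:3tesi}.

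The main technical step, and the one I expect to be the chief obstacle, is the commutator lower bound $\mathcal{S}_t+[\mathcal{S},\mathcal{A}]\geq -M_0$. With $\varphi=\gamma|x|^2$ one has $\nabla_x\varphi=2\gamma x$, $D^2_x\varphi=2\gamma I$, $\Delta^2_x\varphi=0$ and $\varphi_t\equiv\varphi_{tt}\equiv 0$, so the formula \eqref{eq:commutator} collapses to
\begin{equation*}
\langle[\mathcal{S},\mathcal{A}]f,f\rangle = (a^2+b^2)\bigl(8\gamma\|\nabla_Af\|_{L^2}^2+32\gamma^3\||x|f\|_{L^2}^2-8\gamma\,\Im\!\int f\,x^tB\cdot\overline{\nabla_Af}\,dx\bigr),
\end{equation*}
while the formula~\eqref{eq:St} for $\mathcal{S}_t$ reduces to $2a\,\Im A_t\cdot\nabla_A-4b\gamma\,x\cdot A_t$, and the second summand vanishes identically by the gauge assumption~\eqref{eq:gauge}. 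The two remaining dangerous pieces, namely the magnetic cross-term involving $x^tB$ and the velocity term involving $A_t$, are each absorbed by the positive $8\gamma(a^2+b^2)\|\nabla_Af\|_{L^2}^2$ via Cauchy--Schwarz with Young's inequality:
\begin{equation*}
8\gamma(a^2+b^2)\bigl|\Im\!\int f x^tB\cdot\overline{\nabla_Af}\,dx\bigr|\leq 4\gamma(a^2+b^2)\|\nabla_Af\|_{L^2}^2+4\gamma(a^2+b^2)\|x^tB\|_{L^\infty}^2\|f\|_{L^2}^2,
\end{equation*}
and analogously the $A_t$-term contributes at worst $\gamma^{-1}\|A_t\|_{L^\infty}^2\|f\|_{L^2}^2$ plus a fraction of $\|\nabla_Af\|_{L^2}^2$. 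The residual multiplicative constants match exactly the definition of $M_A$ in~\eqref{eq:3A}, so one obtains $\langle(\mathcal{S}_t+[\mathcal{S},\mathcal{A}])f,f\rangle\geq -M_A\|f\|_{L^2}^2$ after discarding the remaining nonnegative terms.

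With both hypotheses of Lemma~\ref{lem:logconv} verified, its conclusion gives $\|v(t)\|_{L^2}^2\leq e^{N(M_A+\sqrt{a^2+b^2}(M_1+M_2)+(a^2+b^2)(M_1^2+M_2^2))}\|v(0)\|_{L^2}^{2(1-t)}\|v(1)\|_{L^2}^{2t}$; taking square roots and rewriting in terms of $u$ yields~\eqref{eq:3tesi}. The convexity of $\log H(t)$ itself is the qualitative statement of Lemma~\ref{lem:logconv}. The key conceptual point, and the one that motivates the Cr\"onstr\"om gauge in the first place, is that the rigid cancellation $x\cdot A_t\equiv 0$ is what prevents a term of order $b\gamma\|A_t\|_{L^\infty}\||x|f\|_{L^2}\|f\|_{L^2}$ from destroying the lower bound, since such a term could only be dominated at the cost of a constant proportional to $\gamma\|A_t\|_{L^\infty}^2$ --- an expression that is not part of the hypothesis $M_A$.
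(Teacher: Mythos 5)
Your proposal is correct and follows essentially the same route as the paper's own proof: conjugation by $e^{\gamma|x|^2}$, the decomposition of Lemma \ref{lem:A1}, the identities \eqref{eq:St} and \eqref{eq:commutator}, Cauchy--Schwarz absorption of the $A_t$- and $x^tB$-cross terms into the positive term $8\gamma(a^2+b^2)\int|\nabla_A v|^2\,dx$, vanishing of the $x\cdot A_t$ term by \eqref{eq:gauge}, and conclusion via the abstract Lemma \ref{lem:logconv}. Two immaterial caveats: the paper additionally spells out the truncation/mollification of the weight needed to fully justify the formal integrations by parts, and your closing claim that an uncancelled $x\cdot A_t$ term could only be absorbed at a cost proportional to $\gamma\|A_t\|_{L^\infty}^2$ is not accurate --- absorbing it into the (here discarded) positive term $32\gamma^3(a^2+b^2)\int|x|^2|v|^2\,dx$ costs only $O\bigl(\gamma^{-1}\|A_t\|_{L^\infty}^2\bigr)$, which is of the same form as $M_A$.
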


\begin{remark}
  Before the proof, we need another remark about condition \eqref{eq:rigorous} in the statement.
  The result ensuring, in concrete situations, the validity of \eqref{eq:rigorous}, is Lemma \ref{lem:dissipation}. Notice that in the statement of Lemma \ref{lem:3} we work with magnetic potentials $A=A(x,t)$ which possibly depend on time, while the time dependence is not permitted in Lemma \ref{lem:dissipation}.
  In fact, as we see in the next section, in the proof of Theorem \ref{thm:3}, after applying the Appell transformation, a natural time dependence of the magnetic potential appears. On the other hand, condition \eqref{eq:rigorous} will hold in the the next section as a heritage of the same property before the Appell transformation, and no additional assumptions on $\partial_tA$ will be needed. This explains why we prefer to assume \eqref{eq:rigorous} in the previous statement without giving explicit conditions under which it is satisfied.
\end{remark}
\begin{proof}[Proof of Lemma \ref{lem:3}]
  We need to check that Lemma \ref{lem:logconv} is applicable.

  Denote again by $v=e^{\varphi(x,t)}u$, with $\varphi(x,t)=\varphi(x):=\gamma|x|^2$. By Lemma \ref{lem:A1}, $v$ satisfies
  \begin{equation*}
    \partial_tv=\mathcal Sv+\mathcal Av
    +(a+ib)\left(V(x,t)v+e^{\varphi}F\right),
  \end{equation*}
  where $\mathcal S$ and $\mathcal A$ are given by \eqref{eq:S}, \eqref{eq:A}, respectively.
  We can estimate
  \begin{equation}\label{eq:G}
    \left|\partial_tv-(\mathcal S+\mathcal A)v\right|
    \leq
    \sqrt{a^2+b^2}\left(M_1|v|+e^{\varphi}|F|\right),
  \end{equation}
  which proves the first of the two conditions in \eqref{eq:logconv1}, with $G:=\sqrt{a^2+b^2}e^\varphi|F|$.
  Hence we just need to check the second condition in \eqref{eq:logconv1}.
  By formulas \eqref{eq:St} and \eqref{eq:commutator} with the choice $\varphi(x)=\gamma|x|^2$ we obtain
  \begin{align}\label{eq:stsa}
    &
    \int\overline v\left(\mathcal S_t+[\mathcal S,\mathcal A]\right)v\,dx
    =2a\Im\int\overline vA_t\cdot\nabla_Av\,dx
    -4b\gamma\int|v|^2x\cdot A_t\,dx
    \\
    \nonumber
    & \ \
    +(a^2+b^2)
    \left\{8\gamma\int |\nabla_Av|^2\,dx
    +8\gamma\Im\int
    \overline vx^tB\cdot\nabla_Av\,dx
    +32\gamma^3\int|v|^2|x|^2\,dx\right\}.
  \end{align}
  The second term at the right-hand side of \eqref{eq:stsa} vanishes, due to \eqref{eq:gauge}.
  By Cauchy-Schwartz, we can estimate
  \begin{align}\label{eq:4}
    \left|2a\Im\int\overline vA_t\cdot\nabla_Av\,dx\right|
    &
    \leq
    \frac{1}{\gamma}\int|A_t|^2|v|^2\,dx+\gamma a^2\int\left|\nabla_Av\right|^2\,dx
    \\
    \label{eq:6}
    \left|8\gamma(a^2+b^2)\Im\int
    \overline vx^tB\cdot\nabla_Av\,dx\right|
    &
    \leq
    4\gamma(a^2+b^2)\int|x^tB|^2|v|^2\,dx
    \\
    \nonumber
    &
    +4\gamma(a^2+b^2)\int|\nabla_Av|^2\,dx;
  \end{align}
  by \eqref{eq:stsa}, \eqref{eq:4}, \eqref{eq:6} it turns out that
  \begin{align}\label{eq:astast}
    &
    \int\overline v\left(\mathcal S_t+[\mathcal S,\mathcal A]\right)v\,dx
    \geq
    3\gamma(a^2+b^2)\int|\nabla_Av|^2\,dx
    +32\gamma^3(a^2+b^2)\int|v|^2|x|^2
    \\
    \nonumber
    & \ \ \ \ \ \
    -\left(\frac{1}{\gamma}
    \sup_{t\in[0,1]}\|A_t\|_{L^\infty}^2
    +4\gamma(a^2+b^2)\sup_{t\in[0,1]}\|x^tB\|_{L^\infty}^2
    \right)\int|v|^2\,dx.
  \end{align}
  Neglecting the positive terms in the last inequality, we have proved that
  \begin{equation}\label{eq:7}
    \mathcal S_t+[\mathcal S,\mathcal A]
    \geq
    -\frac{1}{\gamma}
    \sup_{t\in[0,1]}\|A_t\|_{L^\infty}^2
    -4\gamma(a^2+b^2)\sup_{t\in[0,1]}\|x^tB\|_{L^\infty}^2
    =-M_A.
  \end{equation}
  In addition, we have
  \begin{equation}\label{eq:8}
    \sup_{t\in[0,1]}
    \frac{\sqrt{a^2+b^2}\left\|e^{\gamma|\cdot|^2}
    F(\cdot,t)\right\|_{L^2}}{\|v(\cdot,t)\|_{L^2}}
    \leq
    \sqrt{a^2+b^2}M_2.
  \end{equation}
  The thesis now follows by Lemma \ref{lem:logconv}.

In order to obtain a completely rigorous proof of Lemma \ref{lem:3} we need a last remark. The positive dissipation $a>0$ provides the sufficient interior regularity for Lemma \ref{lem:dissipation} to hold. In the next section, when we apply Lemma \ref{lem:3} to a concrete situation, in order to justify all the above computations we need to work with the following multipliers.
  Given $a>0$ and $\rho\in(0,1)$, define
  \begin{equation*}
    \varphi_a(x)
    =
    \begin{cases}
     \gamma|x|^2,
     \qquad\qquad
     \text{if }
      |x|<1
     \\
     \gamma\frac{2|x|^{2-a}-a}{2-a}
     \qquad\,
     \text{if }
     |x|\geq1
    \end{cases}
  \end{equation*}
and replace $\varphi=\gamma|x|^2$ by $\varphi_{a,\rho}=\theta_\rho\star\varphi_a$, being $\theta_\rho$ a smooth delta-sequence. One can easily check that all the above computations are then justified as a limit when $a,\rho\to0$. See \cite{EKPV2} for further details.
\end{proof}

In an analogous way, we prove the following result:
\begin{lemma}\label{lem:4}
  Under the same assumptions as in Lemma \ref{lem:3}, there exists a constant $N=N\left(\frac{1}{\gamma},\frac{1}{a^2+b^2}\right)>0$ such that
  \begin{align}
    \label{eq:9}
    &
    \left\|\sqrt{t(1-t)}e^{\gamma|x|^2}
    \nabla_Au(x,t)\right\|_{L^2(\R^n\times[0,1])}
    +\gamma\left\|\sqrt{t(1-t)}e^{\gamma|x|^2}
    |x|u(x,t)\right\|_{L^2(\R^n\times[0,1])}
    \\
    \nonumber
    &
    \ \ \
    \leq
    N\left[(M_1+\sqrt{M_A}+1)\sup_{t\in[0,1]}
    \left\|e^{\gamma|\cdot|^2}
    u(\cdot,t)\right\|_{L^2}+
    \sup_{t\in[0,1]}
    \left\|e^{\gamma|\cdot|^2}
    F(\cdot,t)\right\|_{L^2}\right].
  \end{align}
\end{lemma}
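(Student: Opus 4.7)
The plan is to work with $v:=e^{\gamma|\cdot|^2}u$ exactly as in the proof of Lemma \ref{lem:3}, and to exploit the identity \eqref{eq:2ekpv2} a second time, now integrated against the cut-off $t(1-t)$ in place of the convex-combination weight. Writing $\mathcal R_-:=\partial_t v-\mathcal Av-\mathcal Sv=(a+ib)\bigl(Vv+e^{\gamma|\cdot|^2}F\bigr)$, discarding the nonnegative term $\|\partial_tv-\mathcal Av+\mathcal Sv\|_{L^2}^2$ in \eqref{eq:2ekpv2}, multiplying by $t(1-t)$, and integrating twice by parts in time (using $t(1-t)|_{t=0,1}=0$) yields
\begin{equation*}
2\!\int_0^1\!\!t(1-t)\!\int_{\R^n}\!\overline v(\mathcal S_t+[\mathcal S,\mathcal A])v\,dx\,dt\le H(0)+H(1)+2\!\int_0^1\!\!(1-2t)\Re\!\!\int_{\R^n}\!\overline v\,\mathcal R_-\,dx\,dt+\!\int_0^1\!\!t(1-t)\|\mathcal R_-\|_{L^2}^2\,dt.
\end{equation*}
By Cauchy--Schwarz and $|V|\le M_1$, the last two right-hand terms are controlled by a constant (depending on $\sqrt{a^2+b^2}$ and $a^2+b^2$) times $(1+M_1^2)\sup_{[0,1]}\!H(t)+\sup_{[0,1]}\|e^{\gamma|\cdot|^2}F\|_{L^2}^2$.

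The heart of the argument is then to insert on the left-hand side the pointwise-in-time lower bound \eqref{eq:astast} already derived in the proof of Lemma \ref{lem:3}, namely
\begin{equation*}
2\!\int_{\R^n}\!\overline v(\mathcal S_t+[\mathcal S,\mathcal A])v\,dx\ge 6\gamma(a^2+b^2)\|\nabla_A v\|_{L^2}^2+64\gamma^3(a^2+b^2)\|\,|\cdot|v\|_{L^2}^2-2M_A\|v\|_{L^2}^2.
\end{equation*}
After time-integration, the single negative contribution satisfies $2M_A\!\int_0^1\!t(1-t)\|v\|_{L^2}^2\,dt\le \tfrac{M_A}{3}\sup_{[0,1]}\!H(t)$ and is moved to the right. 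Dividing through by $\gamma(a^2+b^2)$ produces, with implicit constant depending only on $1/\gamma$ and $1/(a^2+b^2)$,
\begin{equation*}
\int_0^1\!t(1-t)\|\nabla_A v\|_{L^2}^2\,dt+\gamma^2\!\int_0^1\!t(1-t)\|\,|\cdot|v\|_{L^2}^2\,dt\lesssim (1+M_1^2+M_A)\sup_{[0,1]}\!H(t)+\sup_{[0,1]}\|e^{\gamma|\cdot|^2}F\|_{L^2}^2.
\end{equation*}
Taking square roots and using $\sqrt{a+b+c}\le\sqrt a+\sqrt b+\sqrt c$ accounts for the $\sqrt{M_A}$ appearing in the statement, whereas $M_A$ itself only occurs linearly on the right of the above quadratic estimate.

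The final step is purely algebraic: since $|\cdot|v=e^{\gamma|\cdot|^2}|\cdot|u$ and $e^{\gamma|\cdot|^2}\nabla_A u=\nabla_A v-2\gamma e^{\gamma|\cdot|^2}xu$, the triangle inequality gives
\begin{equation*}
\|\sqrt{t(1-t)}e^{\gamma|\cdot|^2}\nabla_A u\|_{L^2(\R^n\times[0,1])}\le\|\sqrt{t(1-t)}\nabla_A v\|_{L^2}+2\gamma\|\sqrt{t(1-t)}e^{\gamma|\cdot|^2}|\cdot|u\|_{L^2},
\end{equation*}
so that the $v$-estimate transfers to the desired inequality \eqref{eq:9}. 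The main obstacle I foresee is purely bookkeeping: carefully tracking the powers of $\gamma$, $a^2+b^2$ and $M_A$ so that the final constant depends only on $1/\gamma$ and $1/(a^2+b^2)$, and so that $M_A$ appears under a square root. Rigorous justification of the formal manipulations proceeds, exactly as at the end of the proof of Lemma \ref{lem:3}, by truncating and mollifying the weight $\varphi(x)=\gamma|x|^2$ outside the unit ball and passing to the limit.
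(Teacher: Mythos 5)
Your proposal is correct and follows essentially the same route as the paper's own proof: the same reduction to $v=e^{\gamma|x|^2}u$, dropping the nonnegative square in \eqref{eq:2ekpv2}, multiplying by $t(1-t)$ and integrating by parts in time, inserting the lower bound \eqref{eq:astast}, controlling the error terms via Cauchy--Schwarz with $|V|\le M_1$, and transferring back to $u$ by the triangle inequality, with the same mollified-weight justification. The bookkeeping you flag (powers of $\gamma$, $a^2+b^2$, and the square root of $M_A$) works out exactly as in the paper's collected estimate preceding \eqref{eq:1ekpv}.
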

\begin{proof}
  Denote again by $v=e^{\gamma|x|^2}u$; we can hence write
  \begin{equation*}
    \nabla_Au=-2\gamma xe^{-\gamma|x|^2}v+
    e^{-\gamma|x|^2}\nabla_Av.
  \end{equation*}
  Consequently, we can estimate
  \begin{align}
  \label{eq:1ekpv}
  &
    \left\|\sqrt{t(1-t)}e^{\gamma|x|^2}
    \nabla_Au(x,t)\right\|_{L^2(\R^n\times[0,1])}
    +\gamma\left\|\sqrt{t(1-t)}e^{\gamma|x|^2}
    |x|u(x,t)\right\|_{L^2(\R^n\times[0,1])}
    \\
    \nonumber
    &
    \ \
    \leq
    3\gamma\left\|\sqrt{t(1-t)}
    |x|v(x,t)\right\|_{L^2(\R^n\times[0,1])}
    +\left\|\sqrt{t(1-t)}
    \nabla_Av(x,t)\right\|_{L^2(\R^n\times[0,1])}.
  \end{align}
  By \eqref{eq:2ekpv2}, we easily estimate
  \begin{align}
  \label{eq:2ekpv}
    \frac{d^2}{dt^2}H(t)
        &
    \geq
    2\partial_t\Re\int
    \overline v(\partial_t-\mathcal S-\mathcal A)v\,dx
    +2\int\overline v
    (\mathcal S_t+[\mathcal S,A])v\,dx
    \\
    \nonumber
    & \ \ \
    -\left\|\partial_tv-\mathcal Av-\mathcal Sv\right\|_{L^2_x}^2.
  \end{align}
  On the other hand, integrating twice by parts we get
  \begin{equation}\label{eq:3ekpv}
    \int_0^1
    t(1-t)\frac{d^2}{dt^2}H(t)\,dt
    =
    H(1)+H(0)-2\int_0^1H(t)\,dt
    \leq
    2\sup_{t\in[0,1]}\|v(\cdot,t)\|_{L^2}^2,
  \end{equation}
  since $H(t)\geq0$.
  Integrating by parts and applying Cauchy-Schwartz and estimate \eqref{eq:G}, we obtain
  \begin{align}
    \label{eq:4ekpv}
    &
    2\int_0^1\int
    t(1-t)\partial_t\Re\overline v(\partial_t-\mathcal S
    -\mathcal A)v\,dx\,dt
    \\
    \nonumber
    & \ \
    =
    -2\int_0^1\int(1-2t)\Re\overline v(\partial_t-\mathcal S
    -\mathcal A)v\,dx\,dt
    \\
    \nonumber
    & \ \
    \geq
    -
    \left(\sup_{t\in[0,1]}\left\|
    \partial_tv-\mathcal Sv-\mathcal Av\right\|_{L^2}^2
    +\sup_{t\in[0,1]}\|v(\cdot,t)\|_{L^2}^2\right)
    \\
    \nonumber
    & \ \
    \geq
    -\frac12\left\{
    \left[(a^2+b^2)M_1^2+1\right]
    \sup_{t\in[0,1]}\|v(\cdot,t)\|_{L^2}^2
    +(a^2+b^2)\sup_{t\in[0,1]}\left\|
    e^{\gamma|\cdot|^2}F(\cdot,t)\right\|_{L^2}^2\right\}.
  \end{align}
  On the other hand, by \eqref{eq:astast} we get
  \begin{align}
    \label{eq:5ekpv}
    &
    2\int_0^1\int
    t(1-t)
    \overline v(\mathcal S_t+[\mathcal S,\mathcal A])v
    \,dx\,dt
    \geq  -\frac{M_A}{3}\sup_{t\in[0,1]}\left\|v(\cdot,t)\right\|_{L^2}^2
    \\
    \nonumber
    & \ \
    +
    2\gamma(a^2+b^2)
    \left\{
    \left\|\sqrt{t(1-t)}\nabla_Av\right\|
    _{L^2(\R^n\times[0,1])}^2+
    \gamma^2\left\|\sqrt{t(1-t)}|x|v\right\|
    _{L^2(\R^n\times[0,1])}^2\right\}
   \end{align}
  while by \eqref{eq:G} we conclude that
  \begin{align}
    \label{eq:6ekpv}
    &
    -\int_0^1t(1-t)
    \left\|\partial_tv-\mathcal Sv-\mathcal Av\right\|
    _{L^2}^2\,dt
    \\
    \nonumber
    & \ \
    \geq
    -\sup_{t\in[0,1]}
    \left\|\partial_tv-\mathcal Sv-\mathcal Av\right\|
    _{L^2}^2\int_0^1t(1-t)\,dt
    \\
    \nonumber
    & \ \
    \geq
    -\frac16\left(a^2+b^2\right)
    \left\{
    M_1^2\sup_{t\in[0,1]}\|v(\cdot,t)\|_{L^2}^2
    +\sup_{t\in[0,1]}\left\|e^{\gamma|\cdot|^2}F(\cdot,t)
    \right\|_{L^2}^2\right\}.
  \end{align}
  Collecting \eqref{eq:2ekpv}, \eqref{eq:3ekpv}, \eqref{eq:4ekpv}, \eqref{eq:5ekpv}, \eqref{eq:6ekpv} we have
  \begin{align*}
    &
    \left\|\sqrt{t(1-t)}\nabla_Av\right\|
    _{L^2(\R^n\times[0,1])}^2+
    \gamma^2\left\|\sqrt{t(1-t)}|x|v\right\|
    _{L^2(\R^n\times[0,1])}^2
    \\
    & \ \
    \leq
    \left[\frac{M^2_1}{3\gamma}
    +\frac{15+2M_A}{12\gamma(a^2+b^2)}\right]
    \sup_{t\in[0,1]}\|v(\cdot,t)\|_{L^2}^2
    +\frac{1}{3\gamma}\sup_{t\in[0,1]}\left\|
    e^{\gamma|\cdot|^2}F(\cdot,t)\right\|_{L^2}^2,
  \end{align*}
  which, together with \eqref{eq:1ekpv}, proves the claim \eqref{eq:9}.

  Also in this case, the proof can be made rigorous by a quite standard argument in the spirit of the one in Lemma \ref{lem:3}.
\end{proof}

All the tools we need to prove Theorem \ref{thm:3} are now ready.

\section{Proof of Theorem \ref{thm:3}}

For the proof of Theorem \ref{thm:3}, we now put together the informations we got in the previous Section. It is sufficient to prove the result in the case $\alpha<\beta$; for the proof in the case $\alpha>\beta$ replace $u(x,t)$ by $\overline u(x,1-t)$, while in the case $\alpha=\beta$ the proof essentially reduces to Lemma \ref{lem:3} and \ref{lem:4} (see Remark \ref{rem:alfabeta} below).
Therefore, from now on we assume
\begin{equation*}
\alpha<\beta.
\end{equation*}
We divide the proof of Theorem \ref{thm:3} into four steps.

\subsection{Step I: the gauge reduction}
Thanks to assumption \eqref{eq:gauge} and Lemma \ref{lem:cronstrom1}, it is now sufficient to prove
Theorem \ref{thm:3} for the function $\widetilde u=e^{i\varphi}u$, where $\varphi$ is the gauge change
defined in \eqref{eq:varphi}. The new potential is $\widetilde A$, defined in \eqref{eq:cronstrom2}. By
abuse of notations, we will skip the tildes; hence, from now on, the additional (and not restrictive)
assumption
\begin{equation}\label{eq:gauge3}
  x\cdot A \equiv 0
\end{equation}
holds, together with the identities \eqref{eq:cronstrom2}, \eqref{eq:cronstrom3}, which in our new
notations read as
\begin{equation}\label{eq:identities}
  A(x)=-\int_0^1\Psi(sx)\,ds;
  \qquad
  x^tDA(x)=
  -\Psi(x)+\int_0^1\Psi(sx)\,ds,
\end{equation}
with $\Psi(x)=x^tB(x)=x^t(DA(x)-DA^t(x))$, which also gives
\begin{equation}\label{eq:gauge32}
  x\cdot x^tDA\equiv0.
\end{equation}
 In particular, \eqref{eq:assA3} and \eqref{eq:identities} also imply that
\begin{equation}\label{eq:assA3new}
  \|A\|_{L^\infty}^2+\|x^tDA\|_{L^\infty}^2
  +\|x^tB\|_{L^\infty}^2\leq M_A.
\end{equation}

\subsection{Step II: the heat regularization}
We now regularize equation \eqref{eq:main3} adding a small dissipation term. Denote by
\begin{equation*}
  H_A:=-\Delta_A-V_1
\end{equation*}
and rewrite equation \eqref{eq:main3} as
\begin{equation}
  \partial_tu=
  -i(H_Au-F(x,t)),
  \qquad
  F(x,t):=V_2(x,t)u.
\end{equation}
Since $H_A$ is self-adjoint by Proposition \ref{prop:self}, we can define, by Spectral Theorem, the mixed flow $e^{(\epsilon+i)tH_A}$, for any $\epsilon>0$. This gives, by parabolic regularity, the function
\begin{equation}\label{eq:uepsilon}
  u_\epsilon(\cdot,t):=e^{(\epsilon+i)tH_A}u(\cdot,0)=
  e^{\epsilon tH_A}u(t)\in
  L^\infty([0,1];L^2(\R^n))\cap L^2([0,1];H^1(\R^n)),
\end{equation}
solving (uniquely) the equation
\begin{equation}\label{eq:uepsiloneq}
  \begin{cases}
  \partial_tu_\epsilon
  =
  (\epsilon+i)\left(\Delta_Au_\epsilon
  +V_1(x)u_\epsilon+F_\epsilon(x,t)\right),
  \\
  u_\epsilon(0)=u(0),
  \end{cases}
\end{equation}
with $F_\epsilon(\cdot,s):=\frac{i}{\epsilon+i}
  e^{\epsilon tH_A}\left(V_2u(\cdot,s)\right)$ (see e.g. \cite{P,S}).
The positive dissipation now permits to apply  Lemma \ref{lem:dissipation}, which is useful in the sequel to make rigorous the applications of Lemma \ref{lem:3}.
We can now prove the following simple result.
\begin{lemma}\label{lem:B}
  Denote by
  \begin{equation}\label{eq:alphabetaepsilon}
    \alpha_\epsilon^2=
    \alpha^2+4\epsilon
    \qquad
    \beta_\epsilon^2=\beta^2+4\epsilon.
  \end{equation}
  The function $u_\epsilon$ defined in \eqref{eq:uepsilon} satisfies the following inequalities:
  \begin{align}
    \label{eq:be}
    \left\|e^{\frac{|\cdot|^2}{\beta_\epsilon^2}}
    u_\epsilon(\cdot,0)\right\|_{L^2}
    &
    \leq
    \left\|e^{\frac{|\cdot|^2}{\beta^2}}
    u(\cdot,0)\right\|_{L^2}
    \\
    \label{eq:ae}
    \left\|e^{\frac{|\cdot|^2}{\alpha_\epsilon^2}}
    u_\epsilon(\cdot,1)\right\|_{L^2}
    &
    \leq
    e^{\epsilon\|V_1\|_{L^\infty}}
    \left\|e^{\frac{|\cdot|^2}{\alpha^2}}
    u(\cdot,1)\right\|_{L^2}
    \\
    \label{eq:te}
    \left\|u_\epsilon(\cdot,t)\right\|_{L^2}
    &
    \leq
    e^{\epsilon\|V_1\|_{L^\infty}}
    \|u(\cdot,t)\|_{L^2}
    \\
    \label{eq:tf}
    \left\|F_\epsilon(\cdot,t)\right\|_{L^2}
    &
    \leq
    e^{\epsilon\|V_1\|_{L^\infty}}
    \|V_2\|_{L^\infty}\|u(\cdot,t)\|_{L^2}
    \\
    \label{eq:f}
    \left\|e^{\frac{|\cdot|^2}{(\alpha_\epsilon t
    +\beta_\epsilon(1-t))^2}}F_\epsilon(\cdot,t)\right\|_{L^2}
    &
    \leq
    e^{\epsilon\|V_1\|_{L^\infty}}
    \left\|e^{\frac{|\cdot|^2}{(\alpha_\epsilon t
    +\beta_\epsilon(1-t))^2}}V_2(\cdot,t)\right\|_{L^\infty}
    \|u(\cdot,t)\|_{L^2},
  \end{align}
  for any $t\in[0,1]$.
\end{lemma}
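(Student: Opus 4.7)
The proof of all five bounds rests on one observation together with one tool. The observation: by inspection of \eqref{eq:uepsiloneq}, the regularized objects admit the explicit heat-semigroup representation
\begin{equation*}
u_\epsilon(\cdot,t)=e^{-\epsilon tH_A}u(\cdot,t),\qquad F_\epsilon(\cdot,t)=\tfrac{i}{\epsilon+i}\,e^{-\epsilon tH_A}\bigl(V_2(\cdot,t)u(\cdot,t)\bigr),
\end{equation*}
where $e^{-sH_A}$ is the heat semigroup generated by the self-adjoint operator $H_A=-\Delta_A-V_1$ (Proposition \ref{prop:self}). The tool: for any $\phi\in L^2$, the function $w(\cdot,s):=e^{-sH_A}\phi$ solves $\partial_sw=(\Delta_A+V_1)w$, which is precisely \eqref{eq:A3} with $a=1$, $b=0$, $V=V_1$, $F=0$. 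Since $V_1$ is real, $M_s\leq s\|V_1\|_{L^\infty}$, and Lemma \ref{lem:dissipation} thus yields the master estimate
\begin{equation*}
\bigl\|e^{\frac{\gamma}{1+4\gamma s}|\cdot|^2}w(\cdot,s)\bigr\|_{L^2}\leq e^{s\|V_1\|_{L^\infty}}\bigl\|e^{\gamma|\cdot|^2}\phi\bigr\|_{L^2}\qquad(\gamma\geq0,\ s\in[0,1]).
\end{equation*}
The plan is to derive each of \eqref{eq:be}--\eqref{eq:f} by specializing $\phi$, $\gamma$, and $s$.

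Estimate \eqref{eq:be} is immediate from $u_\epsilon(\cdot,0)=u(\cdot,0)$ and $\beta_\epsilon>\beta$, with no dissipation input required. For \eqref{eq:ae}, I would take $\phi=u(\cdot,1)$, $s=\epsilon$, and $\gamma=\alpha^{-2}$: the identity
\begin{equation*}
\frac{\alpha^{-2}}{1+4\epsilon\,\alpha^{-2}}=\frac{1}{\alpha^2+4\epsilon}=\frac{1}{\alpha_\epsilon^2}
\end{equation*}
matches the output rate to the target, while $w(\cdot,\epsilon)=u_\epsilon(\cdot,1)$. For \eqref{eq:te} and \eqref{eq:tf}, I would use $\gamma=0$, $s=\epsilon t$, with $\phi=u(\cdot,t)$ and $\phi=V_2(\cdot,t)u(\cdot,t)$ respectively; the latter is then completed by $|i/(\epsilon+i)|\leq 1$ and the H\"older bound $\|V_2u\|_{L^2}\leq\|V_2\|_{L^\infty}\|u\|_{L^2}$.

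The delicate case is \eqref{eq:f}, and this is where I expect the real work. The plan is to apply the master inequality to $\phi=V_2(\cdot,t)u(\cdot,t)$ with $s=\epsilon t$, choosing $\gamma$ so that the output rate $\frac{\gamma}{1+4\gamma\epsilon t}$ equals $(\alpha_\epsilon t+\beta_\epsilon(1-t))^{-2}$, and then using H\"older to separate $V_2$ in $L^\infty$ from $u$ in $L^2$. The hardest step will be the algebraic compatibility check: showing that the resulting input Gaussian rate on $V_2$ is dominated by the weight $(\alpha_\epsilon t+\beta_\epsilon(1-t))^{-2}$ stated on the right-hand side of \eqref{eq:f}. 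This reduces to a quadratic inequality in $t\in[0,1]$ comparing $(\alpha_\epsilon t+\beta_\epsilon(1-t))^2$, $(\alpha t+\beta(1-t))^2$ and the correction term $4\epsilon t$, and is exactly the step for which the parametrization $\alpha_\epsilon^2=\alpha^2+4\epsilon$, $\beta_\epsilon^2=\beta^2+4\epsilon$ has been tailored; checking it by the convexity of the affine function $t\mapsto\alpha_\epsilon t+\beta_\epsilon(1-t)$ will occupy most of the computation. Finally, as in Lemma \ref{lem:3}, the formal use of the Gaussian weights $e^{\gamma|x|^2}$ is justified rigorously by a standard truncation-and-mollification procedure and a limiting argument.
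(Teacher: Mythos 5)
Your proofs of \eqref{eq:be}, \eqref{eq:ae}, \eqref{eq:te} and \eqref{eq:tf} are correct and coincide with the paper's: the paper applies Lemma \ref{lem:dissipation} with dissipation coefficient $a=\epsilon$, $b=0$, $V=V_1$, $F=0$ on $[0,T]$, $T\in[0,1]$, whereas you set $a=1$ and run the heat flow for time $s=\epsilon t$; this is the same computation after a linear change of the time variable, and your parameter choices (in particular $\gamma=\alpha^{-2}$, $s=\epsilon$, producing the rate $\alpha_\epsilon^{-2}$) are exactly the paper's.

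For \eqref{eq:f}, however, the step you postpone as ``the hardest step'' is not hard but false, and the plan cannot be repaired within your scheme. Set $\gamma_{\mathrm{out}}:=(\alpha_\epsilon t+\beta_\epsilon(1-t))^{-2}$. Since the heat flow strictly weakens Gaussian rates ($\gamma\mapsto\gamma/(1+4\gamma s)<\gamma$ for $s>0$), imposing that the output rate equal $\gamma_{\mathrm{out}}$ forces the input rate
\begin{equation*}
\gamma_{\mathrm{in}}=\frac{\gamma_{\mathrm{out}}}{1-4\epsilon t\,\gamma_{\mathrm{out}}}>\gamma_{\mathrm{out}},
\qquad t\in(0,1],
\end{equation*}
so after H\"older you are left with $\bigl\|e^{\gamma_{\mathrm{in}}|\cdot|^2}V_2(\cdot,t)\bigr\|_{L^\infty}$, a norm strictly stronger than the one on the right-hand side of \eqref{eq:f}: the domination you hope to verify goes the wrong way. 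More generally, ``output rate $\geq\gamma_{\mathrm{out}}$'' forces $\gamma_{\mathrm{in}}>\gamma_{\mathrm{out}}$, while ``input rate $\leq\gamma_{\mathrm{out}}$'' requires $\gamma_{\mathrm{in}}\leq\gamma_{\mathrm{out}}$, so no choice of $\gamma_{\mathrm{in}}$ yields \eqref{eq:f} with the same $\epsilon$-subscripted weight on both sides by this argument. What the paper does instead is to take the $\epsilon$-free input rate $\gamma=(\alpha t+\beta(1-t))^{-2}$ (the weight of hypothesis \eqref{eq:assV23}), which gives the output rate $\bigl[(\alpha t+\beta(1-t))^2+4\epsilon t\bigr]^{-1}$, and then to use the inequality for which the parametrization \eqref{eq:alphabetaepsilon} is really tailored, namely
\begin{equation*}
(\alpha_\epsilon t+\beta_\epsilon(1-t))^2
\geq(\alpha t+\beta(1-t))^2+4\epsilon
\geq(\alpha t+\beta(1-t))^2+4\epsilon t,
\end{equation*}
a consequence of $\alpha_\epsilon\beta_\epsilon\geq\alpha\beta+4\epsilon$ (i.e.\ of $\alpha^2+\beta^2\geq2\alpha\beta$). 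This proves \eqref{eq:f} with $\bigl\|e^{|\cdot|^2/(\alpha t+\beta(1-t))^2}V_2(\cdot,t)\bigr\|_{L^\infty}$ on the right-hand side, which is the version controlled by assumption \eqref{eq:assV23} and the one actually invoked later (see \eqref{eq:6thm3}); the $\epsilon$-subscripted weight on $V_2$ displayed in \eqref{eq:f} has to be read as this larger, $\epsilon$-free weight.
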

\begin{proof}
  Inequality \eqref{eq:be} is immediate.

  In order to prove \eqref{eq:ae}, let us introduce the function $w(\cdot,t):=e^{-\epsilon tH_A}u(\cdot,1)$, solving the equation
  \begin{equation*}
    \partial_tw=-\epsilon H_Aw=\epsilon(\Delta_Aw+V_1w).
  \end{equation*}
  Then \eqref{eq:ae} follows applying inequality \eqref{eq:dissipation} to $w$, with $\gamma:=\frac1{\alpha^2}$ and $T=1$.

  To prove \eqref{eq:te} write $u_\epsilon(\cdot,t):=e^{\epsilon tH_A}u(t)$ and apply again \eqref{eq:dissipation}, with $\gamma=0$ and $T=t$.

  For the proof of \eqref{eq:tf}, introduce the function $w(\cdot,t):=e^{\epsilon tH_A}(V_2u(\cdot,t))$ and apply again \eqref{eq:dissipation}, with $\gamma=0$, $T=t$. Finally, by the application of inequality \eqref{eq:dissipation} to the same function, with $\gamma=\frac{1}{(\alpha t+\beta(1-t))^2}$ and $T=t$, the proof of \eqref{eq:f} easily follows.
\end{proof}

\subsection{Step III: the Appell transformation}\label{subsec:appellproof}

We now apply the Appell transformation to the function $u_\epsilon$. Let $\alpha_\epsilon,\beta_\epsilon$ be the same as in \eqref{eq:alphabetaepsilon} and define
\begin{align}
\label{eq:appellproof}
  &
  \widetilde u_\epsilon(x,t)
  :=
  \left(\frac{\sqrt{\alpha_\epsilon\beta_\epsilon}}
  {\alpha_\epsilon (1-t)+\beta_\epsilon t}\right)^{\frac n2}
  \times
  \\
  \nonumber
  &
  \times
  u_\epsilon\left(
  \frac{\sqrt{\alpha_\epsilon\beta_\epsilon}}
  {\alpha_\epsilon (1-t)+\beta_\epsilon t}x,
  \frac{\beta_\epsilon}
  {\alpha_\epsilon (1-t)+\beta_\epsilon t}t\right)
  e^{\frac{\alpha_\epsilon-\beta_\epsilon}
  {4(\epsilon+i)(\alpha_\epsilon (1-t)+\beta_\epsilon t)}
  |x|^2}.
\end{align}
Since $x\cdot A\equiv0$ due to step I, by Lemma \ref{lem:appell} we have that $\widetilde u_\epsilon$ solves
\begin{equation}\label{eq:2appellproof}
  \partial_t\widetilde u_\epsilon=(\epsilon+i)\left(\Delta_{\widetilde A_\epsilon}\widetilde u_\epsilon
  +
  \widetilde V_\epsilon(x,t)\widetilde u_\epsilon+\widetilde F_\epsilon(x,t)\right),
\end{equation}
where
\begin{align}
  \label{eq:Aappellproof}
  \widetilde A_\epsilon(x,t)
  &
  = \frac{\sqrt{\alpha_\epsilon
  \beta_\epsilon}}{\alpha_\epsilon(1-t)+\beta_\epsilon t}
  A\left(\frac{\sqrt{\alpha_\epsilon\beta
  _\epsilon}}{\alpha_\epsilon(1-t)+\beta_\epsilon t}x\right)
  \\
  \label{eq:Vappellproof}
  \widetilde V_\epsilon(x,t)
  &
  = \frac{\alpha_\epsilon\beta_\epsilon}{(\alpha_\epsilon
  (1-t)+\beta_\epsilon t)^2}
  V_1\left(\frac{\sqrt{\alpha_\epsilon
  \beta_\epsilon}}{\alpha_\epsilon(1-t)+\beta_\epsilon t}x\right)
  \\
  \label{eq:Fappellproof}
  \widetilde F_\epsilon(x,t)
  &
  = \left(\frac{\sqrt{\alpha_\epsilon
  \beta_\epsilon}}{\alpha_\epsilon
  (1-t)+\beta_\epsilon t}\right)^{\frac n2+2}
  \times
  \\
  \nonumber
  & \ \ \ \times
  F_\epsilon
  \left(\frac{\sqrt{\alpha_\epsilon\beta_\epsilon}}
  {\alpha_\epsilon(1-t)+\beta_\epsilon t}x,\frac{\beta_\epsilon}{\alpha_\epsilon
  (1-t)+\beta_\epsilon t}t\right)
  e^{\frac{(\alpha_\epsilon-\beta_\epsilon)|x|^2}{4(\epsilon+i)
  (\alpha_\epsilon(1-t)+\beta_\epsilon t)}}.
\end{align}
In addition, by Corollary \ref{cor:appell}, for any $\gamma\in\R$ we have
\begin{align}
    \label{eq:corappell1proof}
    &
    \left\|e^{\gamma|\cdot|^2}\widetilde u_\epsilon(\cdot,t)\right\|
    _{L^2}
    =\left\|e^{\left[\frac{\gamma\alpha_\epsilon
    \beta_\epsilon}{(\alpha_\epsilon s
    +\beta_\epsilon(1-s))^2}+
    \frac{(\alpha_\epsilon-\beta_\epsilon)
    \epsilon}{4(\epsilon^2+1)(\alpha_\epsilon s+\beta_\epsilon(1-s))}\right]|\cdot|^2}u_\epsilon
    (\cdot,s)\right\|_{L^2}
    \\
    \label{eq:corappell2proof}
    &
    \left\|e^{\gamma|\cdot|^2}\widetilde F_\epsilon(\cdot,t)\right\|
    _{L^2}
    =
    \frac{\alpha_\epsilon\beta_\epsilon}
    {(\alpha_\epsilon(1-t)+\beta_\epsilon t)^2}
    \times
    \\
    \nonumber
    &
    \qquad\qquad\qquad\ \ \ \
    \times
    \left\|e^{\left[\frac{\gamma\alpha_\epsilon
    \beta_\epsilon}{(\alpha_\epsilon s
    +\beta_\epsilon(1-s))^2}+
    \frac{(\alpha_\epsilon-\beta_\epsilon)
    \epsilon}{4(\epsilon^2+1)(\alpha_\epsilon s+\beta_\epsilon(1-s))}\right]|\cdot|^2}F_\epsilon
    (\cdot,s)\right\|_{L^2},
\end{align}
for $s=\frac{\beta_\epsilon t}{\alpha_\epsilon (1-t)+\beta_\epsilon t}$.

The goal is to apply Lemma \ref{lem:3} to the function $\widetilde u_\epsilon$. In order to do this,
we now need two more results regarding the evolution of the $L^2_x$-norms of $u$ and $\widetilde u_\epsilon$.
\begin{lemma}\label{lem:C1C2}
 Denote by
 \begin{equation}\label{eq:N1}
   N_1:=e^{\sup_{t\in[0,1]}\left\|
   \Im V_2(\cdot,t)\right\|_{L^\infty}}.
 \end{equation}
 The following inequalities hold
 \begin{align}
 \label{eq:m}
   \frac{1}{N_1}
   \|u(\cdot,0)\|_{L^2}
   &
   \leq
   \|u(\cdot,t)\|_{L^2}
   \leq
   N_1\|u(\cdot,0)\|_{L^2}
   \\
   \label{eq:m2}
   \frac{d}{dt}
   \left\|\widetilde u_{\epsilon}
   (\cdot,t)\right\|_{L^2}
   &
   \leq
   \epsilon\frac{\beta}{\alpha}
   e^{\epsilon\|V_1\|_{L^\infty}}
   N_1\|u(\cdot,0)\|_{L^2}
   \left(\|V_1\|_{L^\infty}+\sup_{t\in[0,1]}
   \|V_2(\cdot,t)\|_{L^\infty}\right),
 \end{align}
 for any $t\in[0,1]$, where $u$ is a solution to \eqref{eq:main3} and $\tilde u_\epsilon$ is the function defined in \eqref{eq:appellproof}.
\end{lemma}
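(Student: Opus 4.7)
My plan is to handle the two inequalities of Lemma \ref{lem:C1C2} by standard energy methods, but at two different levels: \eqref{eq:m} at the level of the original equation \eqref{eq:main3}, and \eqref{eq:m2} at the level of the equation \eqref{eq:2appellproof} satisfied by the Appell-transformed regularized solution $\widetilde u_\epsilon$.

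For \eqref{eq:m}, I would multiply \eqref{eq:main3} by $\overline u$, integrate in $dx$, and take the real part. Since $-\Delta_A$ is self-adjoint (so that $\int \Delta_A u\,\overline u\,dx = -\|\nabla_A u\|_{L^2}^2 \in \R$) and $V_1$ is real, both of these contributions produce real integrals that are annihilated by multiplication by $i$ followed by taking real parts; only $\Im V_2$ survives. This yields the identity
\[
\frac{d}{dt}\|u(\cdot,t)\|_{L^2}^2 = -2\int \Im V_2(x,t)\,|u(x,t)|^2\,dx,
\]
and hence $\bigl|\frac{d}{dt}\|u\|_{L^2}^2\bigr|\le 2\|\Im V_2(\cdot,t)\|_{L^\infty}\|u\|_{L^2}^2$. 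Gronwall's inequality applied forwards in $t$ gives the upper bound in \eqref{eq:m}, and applied backwards from any $t\in[0,1]$ to $0$ gives the lower bound.

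For \eqref{eq:m2}, I would start from equation \eqref{eq:2appellproof}. Thanks to Step I (the Cr\"onstrom gauge reduction), $x\cdot A\equiv 0$, so the first-order term in \eqref{eq:2appellproof} drops out, and \eqref{eq:Vappellproof} shows that $\widetilde V_\epsilon$ is real (as $V_1$ is). Multiplying by $\overline{\widetilde u_\epsilon}$, integrating, and taking real parts produces
\[
\frac{1}{2}\frac{d}{dt}\|\widetilde u_\epsilon\|_{L^2}^2 = -\epsilon\|\nabla_{\widetilde A_\epsilon}\widetilde u_\epsilon\|_{L^2}^2 + \epsilon\int \widetilde V_\epsilon|\widetilde u_\epsilon|^2\,dx + \Re\Bigl[(\epsilon+i)\int \widetilde F_\epsilon\,\overline{\widetilde u_\epsilon}\,dx\Bigr].
\]
The first, non-positive term is discarded. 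The second is controlled using \eqref{eq:Vappellproof} together with the easy inequality $\beta_\epsilon/\alpha_\epsilon \le \beta/\alpha$ (valid because $\alpha<\beta$), which gives $\|\widetilde V_\epsilon\|_{L^\infty}\le \frac{\beta}{\alpha}\|V_1\|_{L^\infty}$. The third term is handled by Cauchy--Schwarz, together with identity \eqref{eq:corappell2proof} applied with $\gamma=0$ (whose exponential weight is non-positive and can be discarded) to convert $\|\widetilde F_\epsilon\|_{L^2}$ into a bound of the form $\frac{\beta}{\alpha}\|F_\epsilon(\cdot,s)\|_{L^2}$, then Lemma \ref{lem:B} inequality \eqref{eq:tf} to obtain $\|F_\epsilon(\cdot,s)\|_{L^2}\le e^{\epsilon\|V_1\|_{L^\infty}}\sup_t\|V_2(\cdot,t)\|_{L^\infty}\|u(\cdot,s)\|_{L^2}$, and finally the already-proved \eqref{eq:m} to replace $\|u(\cdot,s)\|_{L^2}$ by $N_1\|u(\cdot,0)\|_{L^2}$. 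Collecting these bounds and dividing by $\|\widetilde u_\epsilon\|_{L^2}$ yields \eqref{eq:m2}.

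The main obstacle is the careful bookkeeping of the transformation factors: the scaling $g(t)=\sqrt{\alpha_\epsilon\beta_\epsilon}/(\alpha_\epsilon(1-t)+\beta_\epsilon t)$ from Appell, the Gaussian weight $e^h$ (whose modulus is controlled by $|e^h|\le 1$ precisely because $\alpha<\beta$ and $\epsilon>0$), and the heat factor $e^{\epsilon sH_A}$ hidden inside $F_\epsilon$. The crucial structural feature is that after dropping the dissipative gradient square, every surviving contribution carries an $\epsilon$ factor from $(\epsilon+i)$ acting on the real integrals, so that the right-hand side of \eqref{eq:m2} indeed displays the announced $\epsilon$-smallness, which is exactly what is needed later to conclude that $\|\widetilde u_\epsilon(\cdot,t)\|_{L^2}$ stays close to its initial value uniformly as $\epsilon\to 0$.
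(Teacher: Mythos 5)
Your overall route is the same as the paper's: \eqref{eq:m} via the energy identity for \eqref{eq:main3} plus Gronwall, and \eqref{eq:m2} via the energy identity for \eqref{eq:2appellproof} combined with \eqref{eq:Vappellproof}, with \eqref{eq:corappell2proof} at $\gamma=0$, with \eqref{eq:tf}, and with \eqref{eq:m} -- precisely the paper's chain \eqref{eq:2C1C2}--\eqref{eq:5C1C2}. Two small omissions: for \eqref{eq:m} you should note (as the paper does) that the formal computation must be closed by an approximation argument, since $u$ is only a $\mathcal C([0,1];L^2)$ solution; and after dividing by $\|\widetilde u_\epsilon\|_{L^2}$ you still need the bound $\|\widetilde u_\epsilon(\cdot,t)\|_{L^2}\leq e^{\epsilon\|V_1\|_{L^\infty}}N_1\|u(\cdot,0)\|_{L^2}$ (the paper's \eqref{eq:4C1C2}, from \eqref{eq:corappell1proof} with $\gamma=0$, \eqref{eq:te} and \eqref{eq:m}) to finish the $V_1$ term; your ``collecting these bounds'' presumably intends this.

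There is, however, a genuine gap in your justification of the $\epsilon$ prefactor in \eqref{eq:m2}, namely the closing claim that ``every surviving contribution carries an $\epsilon$ factor from $(\epsilon+i)$ acting on the real integrals''. That is true for the Laplacian and $\widetilde V_\epsilon$ terms, whose integrals are real, but false for the source term: $z:=\int\widetilde F_\epsilon\,\overline{\widetilde u_\epsilon}\,dx$ is genuinely complex, $\Re[(\epsilon+i)z]=\epsilon\Re z-\Im z$ carries no $\epsilon$, and the Cauchy--Schwarz step you invoke only gives $\sqrt{1+\epsilon^2}\,\|\widetilde F_\epsilon\|_{L^2}\|\widetilde u_\epsilon\|_{L^2}$, not $\epsilon\|\widetilde F_\epsilon\|_{L^2}\|\widetilde u_\epsilon\|_{L^2}$. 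The obstruction is not just technical: since $F_\epsilon=\frac{i}{\epsilon+i}e^{\epsilon sH_A}(V_2u)$, the factor $\frac{i}{\epsilon+i}$ cancels $(\epsilon+i)$, and unwinding the Appell change of variables one finds
\begin{equation*}
  \Re\Bigl[(\epsilon+i)\int\widetilde F_\epsilon\,\overline{\widetilde u_\epsilon}\,dx\Bigr]
  \longrightarrow
  -\,g_0(t)^2\int\Im V_2(y,s)\,|u(y,s)|^2\,dy,
  \qquad
  g_0(t):=\frac{\sqrt{\alpha\beta}}{\alpha(1-t)+\beta t},
\end{equation*}
as $\epsilon\to0$, which is nonzero whenever $\Im V_2\not\equiv0$; so no argument can make this contribution $O(\epsilon)$. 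What your proof actually establishes is \eqref{eq:m2} with $\epsilon\|V_1\|_{L^\infty}+\sqrt{1+\epsilon^2}\,\sup_{t}\|V_2(\cdot,t)\|_{L^\infty}$ in place of $\epsilon\bigl(\|V_1\|_{L^\infty}+\sup_{t}\|V_2(\cdot,t)\|_{L^\infty}\bigr)$. In fairness, this defect is inherited from the paper itself: its intermediate inequality \eqref{eq:2C1C2} asserts the same unjustified $\epsilon$ in front of $\|\widetilde F_\epsilon\|_{L^2}$, and the weaker inequality is all that is used later (in \eqref{eq:8thm3} only a bound by a constant $C$, uniform in $t$, is needed to get uniform convergence of $\|\widetilde u_\epsilon\|_{L^2}$). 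But as a proof of the lemma as stated, your argument does not deliver the claimed $\epsilon$-smallness of the $V_2$ contribution, and the mechanism you offer for it is incorrect.
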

\begin{proof}
  Formally, multiplying \eqref{eq:main3} by $\overline u$, integrating in $dx$ and taking the real part of the resulting identity, \eqref{eq:m} immediately follows.
  This argument is rigorous for solutions $u\in \mathcal C([0,1];H^1)$; a standard approximation argument permits to conclude the same for $L^2$-solutions.

  With the same argument, which is now rigorous since $\widetilde u_\epsilon$ is in $H^1$, by equation \eqref{eq:2appellproof} we easily obtain
  \begin{equation}\label{eq:2C1C2}
    \frac{d}{dt}\left\|\widetilde u_\epsilon
    (\cdot,t)\right\|_{L^2}
    \leq
    \epsilon\left(\left\|\widetilde V_\epsilon
    (\cdot,t)\right\|_{L^\infty}
    \left\|\widetilde u_\epsilon
    (\cdot,t)\right\|_{L^2}
    +\left\|\widetilde F_\epsilon
    (\cdot,t)\right\|_{L^2}
    \right),
  \end{equation}
  and by \eqref{eq:Vappellproof} we easily estimate
  \begin{equation}\label{eq:3C1C2}
    M_{1,\epsilon}:=
    \left\|\widetilde V_\epsilon
    (\cdot,t)\right\|_{L^\infty}
    \leq
    \sup_{t\in[0,1]}
    \frac{\alpha_\epsilon\beta_\epsilon}
    {(\alpha_\epsilon t+\beta_\epsilon(1-t))^2}
    \left\|V_1\right\|_{L^\infty}
    \leq
    \frac{\beta}{\alpha}M_1<\infty,
  \end{equation}
  $M_1$ being the constant defined in \eqref{eq:assV3}.
  Taking $\gamma=0$ in \eqref{eq:corappell1proof}, since $\alpha<\beta$ we get
  \begin{equation*}
    \left\|\widetilde u_\epsilon
    (\cdot,t)\right\|_{L^2}
    \leq
    \left\|u_\epsilon
    (\cdot,s)\right\|_{L^2},
  \end{equation*}
  and by the last inequality, together with \eqref{eq:te} and \eqref{eq:m} we conclude that
  \begin{equation}\label{eq:4C1C2}
    \left\|\widetilde u_\epsilon
    (\cdot,t)\right\|_{L^2}
    \leq
    e^{\epsilon\|V_1\|_{L^\infty}}
    \left\|u(\cdot,s)\right\|_{L^2}
    \leq
    e^{\epsilon\|V_1\|_{L^\infty}}
    N_1
    \left\|u(\cdot,0)\right\|_{L^2}.
  \end{equation}
  Arguing in a similar way, by \eqref{eq:corappell2proof} with $\gamma=0$, \eqref{eq:tf} and \eqref{eq:m} we get
  \begin{align}\label{eq:5C1C2}
    \left\|\widetilde F_\epsilon(\cdot,t)\right\|_{L^2}
    \leq
    \frac{\beta}{\alpha}
    \left\|F_\epsilon(\cdot,s)\right\|_{L^2}
    &
    \leq
    \frac{\beta}{\alpha}
    e^{\epsilon\|V_1\|_{L^\infty}}
    \left\|V_2(\cdot,s)\right\|_{L^\infty}
    \left\|u(\cdot,s)\right\|_{L^2}
    \\
    \nonumber
    &
    \leq
    \frac{\beta}{\alpha}
    e^{\epsilon\|V_1\|_{L^\infty}}
    \sup_{[t\in0,1]}
    \left\|V_2(\cdot,s)\right\|_{L^\infty}
    N_1\left\|u(\cdot,0)\right\|_{L^2}.
  \end{align}
  Inequality \eqref{eq:m2} now follows from \eqref{eq:2C1C2}, \eqref{eq:3C1C2}, \eqref{eq:4C1C2} and \eqref{eq:5C1C2}.
\end{proof}
We are finally ready to check the applicability of Lemma \ref{lem:3} to $\widetilde u_\epsilon$.

First, taking $\gamma=\frac{1}{\alpha_\epsilon\beta_\epsilon}=:
\gamma_{\epsilon}$ in \eqref{eq:corappell1proof}, since $\alpha<\beta$ we get
\begin{equation}\label{eq:4thm3}
  \left\|e^{\gamma_\epsilon|\cdot|^2}
  \widetilde u_\epsilon(\cdot,0)\right\|_{L^2}
  \leq
  \left\|e^{\frac{|\cdot|^2}{\beta_\epsilon^2}}
  u_\epsilon(\cdot,0)\right\|_{L^2}<\infty,
\end{equation}
by \eqref{eq:be} and \eqref{eq:decay3} (here we also used the fact that $s=0$ when $t=0$).

Analogously, by \eqref{eq:ae}, \eqref{eq:decay3} and the fact that $s=1$ when $t=1$, we obtain
\begin{equation}\label{eq:5thm3}
  \left\|e^{\gamma_\epsilon|\cdot|^2}
  \widetilde u_\epsilon(\cdot,1)\right\|_{L^2}
  \leq
  \left\|e^{\frac{|\cdot|^2}{\alpha_\epsilon^2}}
  u_\epsilon(\cdot,1)\right\|_{L^2}<\infty.
\end{equation}
Taking now $\gamma=\gamma_\epsilon$ in \eqref{eq:corappell2proof}, by \eqref{eq:f} and \eqref{eq:m} we easily estimate
\begin{equation}\label{eq:6thm3}
  \left\|e^{\gamma_\epsilon|\cdot|^2}
  \widetilde F_\epsilon(\cdot,t)\right\|_{L^2}
  \leq
  \frac{\beta}{\alpha}
  e^{\epsilon\|V_1\|_{L^\infty}}
  \left\|e^{\frac{|\cdot|^2}{
  (\alpha s+\beta(1-s))^2}}V_2(\cdot,s)\right\|_{L^\infty}
  N_1\|u(\cdot,0)\|_{L^2}.
\end{equation}
On the other hand, taking $\gamma=0$ in \eqref{eq:corappell1proof} gives
\begin{equation}\label{eq:7thm3}
  \lim_{\epsilon\to0}\left\|
  \widetilde u_\epsilon(\cdot,t)\right\|_{L^2}
  =
  \lim_{\epsilon\to0}\left\|
  e^{\frac{(\alpha_\epsilon-\beta_\epsilon)\epsilon}
  {4(\epsilon^2+1)(\alpha_\epsilon s+\beta_\epsilon
  (1-s))}|\cdot|^2}
   u_\epsilon(\cdot,s)\right\|_{L^2}
   =
   \|u(\cdot,s)\|_{L^2}.
\end{equation}
Now, by Lemma \ref{lem:C1C2}
\begin{equation}\label{eq:8thm3}
  \frac{d}{dt}\left\|\widetilde u_\epsilon(\cdot,t)\right\|
  _{L^2}\leq C,
\end{equation}
for some $C=C\left(\epsilon,\alpha,\beta,\|V_1\|_{L^\infty},
\|u(\cdot,0)\|_{L^2},
\sup_{t\in[0,1]}\|V_2(\cdot,t)\|_{L^\infty}\right)$ and for any $t\in[0,1]$.
By \eqref{eq:7thm3} and \eqref{eq:8thm3} we hence obtain that
\begin{equation*}
  \left\|
  \widetilde u_\epsilon(\cdot,t)\right\|_{L^2}
  \to
  \left\|
   u(\cdot,s)\right\|_{L^2},
\end{equation*}
as $\epsilon\to0$, uniformly in $[0,1]$, and in particular, by \eqref{eq:m}, there exists $0<\epsilon_0=\epsilon_0(\|u(\cdot,0)\|_{L^2},N_1)$ such that
\begin{equation}\label{eq:9thm3}
  \left\|
  \widetilde u_\epsilon(\cdot,t)\right\|_{L^2}
  \geq
  \frac{\|u(\cdot,0)\|_{L^2}}{2N_1},
\end{equation}
for any $t\in[0,1]$ and any $\epsilon\in(0,\epsilon_0)$.
By \eqref{eq:6thm3} and \eqref{eq:9thm3} we finally obtain
\begin{equation}\label{eq:10thm3}
  M_2{,\epsilon}
  :=
  \sup_{t\in[0,1]}\frac{\left\|e^{\gamma_\epsilon|\cdot|^2}
  \widetilde F_\epsilon(\cdot,t)\right\|_{L^2}}
  {\left\|
  \widetilde u_\epsilon(\cdot,t)\right\|_{L^2}}
  \leq
  2N_1^2
  \frac{\beta}{\alpha}
  e^{\epsilon\|V_1\|_{L^\infty}}
  \sup_{s\in[0,1]}\left\|e^{\frac{|\cdot|^2}{
  (\alpha s+\beta(1-s))^2}}V_2(\cdot,s)\right\|_{L^\infty}
  <\infty,
\end{equation}
by assumptions \eqref{eq:assV3}, \eqref{eq:assV23}, for $\epsilon>0$ small enough; this ensures the validity of the second condition \eqref{eq:3V} (the first condition in \eqref{eq:3V} is quite immediate, thanks to \eqref{eq:Vappellproof} and \eqref{eq:assV3}).

We now pass to condition \eqref{eq:3A}. By \eqref{eq:Aappellproof}, writing $g_\epsilon(t):=\sqrt
{\alpha_\epsilon\beta_\epsilon}/(\alpha_\epsilon(1-t)
+\beta_\epsilon t)$ we explicitly compute
\begin{align}
\label{eq:nuova1}
  \partial_t\widetilde A_\epsilon(x,t)
  &
  =
  g_\epsilon'(t)\left[
  A(xg_\epsilon(t))+g_\epsilon(t)
  x^tDA(xg_\epsilon(t))\right]
  \\
  \label{eq:nuova2}
  x^t\widetilde B_\epsilon(x,t)
  &
  :=
  x^t(D\widetilde A_\epsilon-D\widetilde A_\epsilon^t)(x,t)
  =
  g_\epsilon^2(t)x^tB(xg_\epsilon(t)).
\end{align}
Writing
\begin{equation*}
  g_\epsilon'(t)=
  \frac{(\alpha_\epsilon-\beta_\epsilon)}{\sqrt
  {\alpha_\epsilon\beta_\epsilon}}g_\epsilon^2(t)
\end{equation*}
and estimating
\begin{equation*}
  \sup_{t\in[0,1]}g_\epsilon^2(t)\leq\frac{\beta_\epsilon}
  {\alpha_\epsilon},
\end{equation*}
we easily obtain, using the above identities, \eqref{eq:assA3}
and \eqref{eq:assA3new},
\begin{align}
  \label{eq:15thm3}
  M_{\widetilde A,\epsilon}
  &
  :=
  \frac{1}{\gamma_\epsilon}
  \sup_{t\in[0,1]}\left\|\partial_t\widetilde A_\epsilon
  (\cdot,t)\right\|_{L^\infty}^2
  +4\gamma_\epsilon(\epsilon^2+1)
  \sup_{t\in[0,1]}\left\|x^t\cdot\widetilde B(\cdot,t)\right\|_{L^\infty}^2
  \\
  \nonumber
  &
  \leq
  \frac{2(\alpha_\epsilon^2
  +\beta_\epsilon^2)\beta_\epsilon^2}{\alpha_\epsilon^2}
  \left(\|A\|_{L^\infty}^2+\|x^tDA\|_{L^\infty}^2\right)
  +\frac{4(\epsilon^2+1)}{\alpha_\epsilon^2}\|x^tB\|
  _{L^\infty}^2
  \\
  \nonumber
  &
  \leq
  \frac{4}{\alpha_\epsilon^2}
  \left[(\alpha_\epsilon^2
  +\beta_\epsilon^2)\beta_\epsilon^2+\epsilon^2+1\right]
  M_A<\infty,
\end{align}
$M_A$ being the constant in \eqref{eq:assA3}.

Finally, notice that from (\ref{eq:gauge3}) and
(\ref{eq:Aappellproof}), and from (\ref{eq:gauge32}) and
(\ref{eq:nuova1}) it follows that
$$
  x\cdot \widetilde A_{\epsilon}\equiv 0,
  \qquad {\hbox {and}} \qquad
  x\cdot \partial_t \widetilde A_{\epsilon}\equiv 0,
$$
respectively.

The  above argument shows that we can apply the results in Lemmata
\ref{lem:3} and \ref{lem:4} to obtain
%

\begin{align}
  \label{eq:17thm3}
  &
  \left\|e^{\gamma_\epsilon|\cdot|^2}
  \widetilde u_\epsilon(\cdot,t)\right\|_{L^2}
  \\
  \nonumber
  & \ \
  \leq
  e^{N_1\left[
  M_{\widetilde A,\epsilon}+
  \sqrt{\epsilon^2+1}(M_{1,\epsilon}+M_{2,\epsilon})
  +(\epsilon^2+1)\left(M_{1,\epsilon}^2+M_{2,\epsilon}^2\right)\right]}
  \left\|e^{\gamma_\epsilon|\cdot|^2}
  \widetilde u_\epsilon(\cdot,0)\right\|_{L^2}^{1-t}
  \left\|e^{\gamma_\epsilon|\cdot|^2}
  \widetilde u_\epsilon(\cdot,1)\right\|_{L^2}^t
  \\
  \label{eq:18thm3}
  &
  \left\|\sqrt{t(1-t)}e^{\gamma_\epsilon|x|^2}
  \nabla_{\widetilde A_\epsilon}
  \widetilde u_\epsilon(x,t)\right\|_{L^2(\R^n\times[0,1])}
  +\gamma_\epsilon
  \left\|\sqrt{t(1-t)}e^{\gamma_\epsilon|x|^2}
  |x|
  \widetilde u_\epsilon(x,t)\right\|_{L^2(\R^n\times[0,1])}
  \\
  \nonumber
  & \ \
  \leq N_{2,\epsilon}
  \left[(M_{1,\epsilon}+1)\sup_{t\in[0,1]}
  \left\|e^{\gamma_\epsilon|\cdot|^2}
  \widetilde u_\epsilon(\cdot,t)\right\|_{L^2}
  +\sup_{t\in[0,1]}
  \left\|e^{\gamma_\epsilon|\cdot|^2}
  \widetilde F_\epsilon(\cdot,t)\right\|_{L^2}
  \right],
\end{align}
with $N_1$ an universal constant and $N_{2,\epsilon}=N_{2,\epsilon}
\left(\epsilon,\gamma_\epsilon\right)>0$.

\subsection{Step IV: conclusion of the proof}\label{subsec:conclusion}

It is now simple to conclude the proof of Theorem \ref{thm:3}. Indeed, it is sufficient to rewrite estimates \eqref{eq:17thm3} and \eqref{eq:18thm3} in terms of the function $u_\epsilon(t)$, using Corollary \ref{cor:appell}; finally, \eqref{eq:31} and \eqref{eq:32} follow by taking the limit as $\epsilon$ tends to $0$. We omit further details.

\begin{remark}\label{rem:alfabeta}
  In the case $\alpha=\beta$ the same proof as above holds, in a much simpler version. Indeed, in this case
  it is useless to apply the Appell transformation and the proof can be directly performed on the
  function $u_\epsilon$, by means of Lemmata \ref{lem:3} and \ref{lem:4}.
\end{remark}

\section{Proof of Theorem \ref{thm:hardy}}\label{sec:proof}

\begin{lemma}[Carleman estimate]\label{thm:carleman}
  Let $n\geq3$, $A=A(x,t):\R^{n+1}\to\R^n$, denote by $B=DA-DA^t$ and assume that $x^tB\in L^\infty$. In addition, assume that 
  \begin{equation}\label{eq:ortogcarleman}
  x\cdot A_t(x)\equiv0,
  \qquad
  v\cdot A_t(x)\equiv0,
  \qquad
   \qquad {\hbox{and}}\qquad
     v^tB(x)\equiv0,
  \end{equation}
  for any $x\in\R^n$ and some unit vector $v\in\mathcal S^{n-1}$.
  Then, for any $\epsilon>0$, $\mu>0$, $g=g(x,t)\in\mathcal C^\infty_0(\R^{n+1})$, and $R>8\mu\epsilon^{-\frac12}\left\|x^tB\right\|_{L^\infty}$, the following inequality holds:
  \begin{align}
    \label{eq:carleman}
    &
    \frac{R}{4}\sqrt{\frac{\epsilon}{\mu}}
    \left\|e^{\mu\left|x+Rt(1-t)v\right|^2
    -\frac{(1+\epsilon)R^2t(1-t)}{16\mu}}g(x,t)\right\|
    _{L^2\left(\R^{n+1}\right)}
    \\
    \nonumber
    & \ \
    \leq
    \left\|e^{\mu\left|x+Rt(1-t)v\right|^2
    -\frac{(1+\epsilon)R^2t(1-t)}{16\mu}}
    \left(\partial_t-i\Delta_A\right)
    g(x,t)\right\|
    _{L^2\left(\R^{n+1}\right)}.
  \end{align}
\end{lemma}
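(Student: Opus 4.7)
The plan is a standard Carleman-estimate strategy built on the identity $\|Lf\|^2_{L^2}=\|\widetilde{\mathcal S}f\|^2+\|\widetilde{\mathcal A}f\|^2+\langle[\widetilde{\mathcal S},\widetilde{\mathcal A}]f,f\rangle$ in space-time $L^2(\mathbb R^{n+1})$, where $\widetilde{\mathcal S}$, $\widetilde{\mathcal A}$ are the symmetric and skew-symmetric parts of the conjugated operator. First I would set $\phi(x,t):=\mu|x+Rt(1-t)v|^2-\frac{(1+\epsilon)R^2t(1-t)}{16\mu}$ and $f:=e^\phi g$, so that $e^\phi(\partial_t-i\Delta_A)g=Lf$ with $L=e^\phi(\partial_t-i\Delta_A)e^{-\phi}$. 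A direct expansion (applying the Leibniz rule to $\Delta_A$) identifies $L=(\partial_t-\mathcal A)+(-\mathcal S)$, where $\mathcal S$, $\mathcal A$ are exactly the operators from Lemma~\ref{lem:A1} with $a=0$, $b=1$. Since $\partial_t$ itself is skew in $L^2(\mathbb R^{n+1})$, the commutator reduces to $[\widetilde{\mathcal S},\widetilde{\mathcal A}]=\mathcal S_t+[\mathcal S,\mathcal A]$.

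Next I would plug our specific $\phi$ into Lemma~\ref{lem:A1}. The simple structure $D^2\phi=2\mu I$, $\nabla\phi=2\mu y$ with $y:=x+Rt(1-t)v$, $\Delta^2\phi=0$, $\nabla\phi_t=2\mu R(1-2t)v$, and
\[
\phi_{tt}=-4\mu R\,(v\cdot y)+2\mu R^2(1-2t)^2+\frac{(1+\epsilon)R^2}{8\mu}
\]
collapses \eqref{eq:commutator}+\eqref{eq:St} to a concrete expression. At this step the hypotheses in \eqref{eq:ortogcarleman} are crucial: $v^tB\equiv0$ turns $(\nabla\phi)^tB$ into $2\mu\,x^tB$, while $x\cdot A_t=v\cdot A_t\equiv0$ kill the $\nabla\phi\cdot A_t$ contribution. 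One also uses the antisymmetry of $B$ together with $v^tB\equiv0$ to observe $v\cdot(x^tB)=-(v^tB)x=0$. Collecting everything one gets
\begin{align*}
\operatorname{Re}\langle(\mathcal S_t+[\mathcal S,\mathcal A])f,f\rangle
&= 8\mu\!\int\!|\nabla_Af|^2+32\mu^3\!\int\!|y|^2|f|^2-4\mu R\!\int\!(v\cdot y)|f|^2\\
&\quad -8\mu\operatorname{Im}\!\int\!fx^tB\cdot\overline{\nabla_Af}+8\mu R(1-2t)\operatorname{Im}\!\int\!\bar f\,v\cdot\nabla_Af\\
&\quad+2\mu R^2(1-2t)^2\!\int\!|f|^2+\frac{(1+\epsilon)R^2}{8\mu}\!\int\!|f|^2.
\end{align*}

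The remaining task is to show this lower bound dominates $\frac{R^2\epsilon}{16\mu}\int|f|^2$. I would do it by two completions of squares. First, the three terms involving $\nabla_Af$ are handled simultaneously via
\[
8\mu\!\int\!\Bigl|\nabla_Af+\tfrac{i}{2}\bigl(x^tB+R(1-2t)v\bigr)f\Bigr|^2\ge0,
\]
which yields a pointwise bound $\ge-2\mu\int|x^tB+R(1-2t)v|^2|f|^2$. Because $v\cdot(x^tB)=0$, the cross-term vanishes and the $R^2(1-2t)^2$ piece cancels exactly against $2\mu R^2(1-2t)^2\int|f|^2$ coming from $\phi_{tt}$. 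Second, I complete the square in $y_\parallel:=v\cdot y$ to obtain $32\mu^3|y|^2-4\mu R(v\cdot y)\ge-\tfrac{R^2}{8\mu}$. Combining, one arrives at a lower bound $\bigl(\tfrac{\epsilon R^2}{8\mu}-2\mu\|x^tB\|_{L^\infty}^2\bigr)\|f\|_{L^2}^2$.

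Finally, the hypothesis $R>8\mu\epsilon^{-1/2}\|x^tB\|_{L^\infty}$ gives $2\mu\|x^tB\|_{L^\infty}^2<\tfrac{R^2\epsilon}{32\mu}$, so the bound simplifies to $\tfrac{3R^2\epsilon}{32\mu}\|f\|^2\ge\tfrac{R^2\epsilon}{16\mu}\|f\|^2$. Taking square roots and undoing the conjugation yields \eqref{eq:carleman}. The conceptual obstacle is the algebraic fit of the completion of squares: the weight $h(t)=-\tfrac{(1+\epsilon)R^2t(1-t)}{16\mu}$ is manufactured precisely so that the positive term $\tfrac{(1+\epsilon)R^2}{8\mu}$ from $\phi_{tt}$ survives after absorbing both the gradient/curvature cross terms (via $v^tB=0$) and the indefinite drift $-4\mu R(v\cdot y)$.
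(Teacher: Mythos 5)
Your proposal is correct and follows essentially the same route as the paper: conjugate by the weight, bound $\|(\partial_t-\mathcal S-\mathcal A)f\|^2$ below by $\langle(\mathcal S_t+[\mathcal S,\mathcal A])f,f\rangle$, evaluate it via the formulas of Lemma~\ref{lem:A1} with $a=0$, $b=1$ (using \eqref{eq:ortogcarleman} exactly as you indicate), and finish by completing squares. The only (harmless) difference is bookkeeping: you absorb the $x^tB$ and $R(1-2t)v$ contributions into a single square, getting the leftover $-2\mu\|x^tB\|_{L^\infty}^2$, whereas the paper completes the square $|\partial^1_Af+i\tfrac{R(1-2t)}{2}f|^2+|\nabla_{A,x'}f|^2$ and then applies Cauchy--Schwarz to the $x^tB$ term, getting $-4\mu\|x^tB\|_{L^\infty}^2$; both versions close under the stated condition on $R$.
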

\begin{proof}
  For simplicity, we can assume without loss of generality that $v=e_1=(1,0,\dots,0)$. Let
  \begin{equation*}
    f(x,t):=e^{\mu\left|x+Rt(1-t)e_1\right|^2
    -\frac{(1+\epsilon)R^2t(1-t)}{16\mu}}g(x,t).
  \end{equation*}
  Then we have
  \begin{equation}\label{eq:serve}
    e^{\mu\left|x+Rt(1-t)e_1\right|^2
    -\frac{(1+\epsilon)R^2t(1-t)}{16\mu}}
    \left(\partial_t-i\Delta_A\right)
    g
    =
    \left(\partial_t-\mathcal S-\mathcal A\right)f,
  \end{equation}
  $\mathcal S$ and $\mathcal A$ being the ones in \eqref{eq:S} and \eqref{eq:A}, respectively, with $a=0$ and $b=1$.

  Following the usual method to prove Carleman estimates (see \cite{H}), we now write
  \begin{align}
    \label{eq:conjugation}
     &
     \left\|\left(\partial_t-\mathcal S-\mathcal A\right)f
     \right\|_{L^2(\R^{n+1})}^2
     \\
     \nonumber
     & \ \
     =
     \left\|\left(\partial_t-\mathcal A\right)f
     \right\|_{L^2(\R^{n+1})}^2
     +
     \left\|\mathcal Sf
     \right\|_{L^2(\R^{n+1})}^2
     -2\Re\int\int
     \mathcal Sf\overline{\left(\partial_t-
     \mathcal A\right)f}\,dx\,dt
     \\
     \nonumber
     & \ \
     \geq
     \int\int\left(\mathcal S_t+[\mathcal S,\mathcal A]\right)f\,\overline f\,dx\,dt.
  \end{align}
Applying now \eqref{eq:St} and \eqref{eq:commutator} with the choices $a=0$, $b=1$, and
\begin{equation*}
  \varphi(x,t)=\mu\left|x+Rt(1-t)e_1\right|^2
    -\frac{(1+\epsilon)R^2t(1-t)}{16\mu},
\end{equation*}
noticing that $\nabla\varphi\cdot A_t\equiv0$ by the first
two conditions in \eqref{eq:ortogcarleman}, an easy computation
involving the completion of two squares leads to
\begin{align}
  \label{eq:quasifin}
  &
  \int\int\left(\mathcal S_t+[\mathcal S,\mathcal A]\right)f\,\overline f\,dx\,dt
  \\
  \nonumber
  &
  =
  32\mu^3\int\int|f|^2\left|
  x+Rt(1-t)e_1-\frac{R}{16\mu^2}e_1\right|^2
  +\frac{\epsilon R^2}{8\mu}\int\int|f|^2
  +8\mu\int\int\left|\nabla_{A,x'}f\right|^2
  \\
  \nonumber
  & \ \
  +8\mu\int\int\left|\partial^1_Af+i\frac{R(1-2t)}
  {2}f\right|^2
  +8\mu\Im\int\int f\left(x+Rt(1-t)e_1\right)^tB\cdot
  \overline{\nabla_Af},
\end{align}
where $\nabla_A=\nabla-iA:=(\partial_A^1,\dots,\partial_A^n)$, $\nabla_{A,x'}:=(0,\partial_A^2,\dots,\partial_A^n)$.
Notice that, since $e_1^tB=0$ and $B$ is anti-symmetric, we can
write
\begin{align}\label{eq:quasifin2}
  f\left(x+Rt(1-t)e_1\right)^tB\cdot
  \overline{\nabla_Af}
  &
  =
  fx^tB\cdot
  \overline{\nabla_Af}
  =
   fx^tB\cdot\left(\overline{\nabla_Af}
  +i\frac{R(1-2t)}{2}e_1f\right)
  \\
  &
  =
  fx^tB\cdot\overline{\nabla_{A,x'}f}
  +fx^tB\cdot\left(\partial^1_Af+
  i\frac{R(1-2t)}{2}f\right) e_1.
\end{align}
Therefore, by Cauchy-Schwartz and the elementary inequality
$ab\leq\delta a^2+\frac1{4\delta} b^2$, with the choice $\delta :=
8\mu$, we can estimate
\begin{align}
  \label{eq:quasifin3}
  &
  \left|
  8\mu\Im\int\int f\left(x+Rt(1-t)e_1\right)^tB\cdot
  \overline{\nabla_Af}
  \right|
  \\
  \nonumber
  & \ \
  \leq
  4\mu\|x^tB\|_{L^\infty}^2\int\int|f|^2
  +8\mu\int\int\left|\partial^1_Af+i\frac{R(1-2t)}
  {2}f\right|^2
  +8\mu\int\int\left|\nabla_{A,x'}f\right|^2.
\end{align}
In conclusion, by \eqref{eq:quasifin} and \eqref{eq:quasifin3}, neglecting the term with cubic growth in $\mu$ we get
\begin{equation*}
  \int\int\left(\mathcal S_t+[\mathcal S,\mathcal A]\right)f\,\overline f
  \geq
  \left[\frac{\epsilon R^2}{8\mu}
  -4\mu\|x^tB\|_{L^\infty}^2\right]\int\int|f|^2.
\end{equation*}
The last inequality, together with \eqref{eq:serve}, \eqref{eq:conjugation} and the condition $R>8\mu\epsilon^{-\frac12}\|x^tB\|_{L^\infty}$, completes the proof of \eqref{eq:carleman}.
\end{proof}
\begin{proof}[Proof of Theorem \ref{thm:hardy}]
  With the tools introduced up to now, the proof of Theorem \ref{thm:hardy} is now reduced to a typical argument in the Carleman's spirit.

  Let $u\in\mathcal C([0,1];L^2(\R^n))$ be the solution to \eqref{eq:main3hardy}. As in the first step of the previous section, we first reduce to the Cronstr\"om gauge, passing from $A$ to $\widetilde A$ by means of Lemma \ref{lem:cronstrom1}. It is hence sufficient to prove that $\tilde u=e^{i\varphi}u\equiv0$, where $\varphi$ is given by \eqref{eq:varphi}. From now on, by abuse of notation, we keep calling $u$ the gauged function $\widetilde u$ and by $A$ the transformed potential $\widetilde A$, which satisfy identities \eqref{eq:cronstrom2}, \eqref{eq:cronstrom3}, \eqref{eq:gaugefinal}.

Now apply the Appell transformation (Lemma \ref{lem:appell}) with $a=0$ and $b=1$, to obtain the new
function $\widetilde u$ in \eqref{eq:appell}, satisfying
\begin{equation*}
  \partial_t\widetilde u = i\left(\Delta_{\widetilde A}u+\widetilde V\tilde u\right),
\end{equation*}
where $\widetilde A$ and $\widetilde V$ are defined by
\eqref{eq:Aappell} and \eqref{eq:Vappell}, respectively, and
$V:=V_1+V_2$. Assumption \eqref{eq:decay3hardy} then gives that
$\|e^{\gamma|x|^2}\tilde u(0)\|_{L^2}+
  \|e^{\gamma|x|^2}\tilde u(1)\|_{L^2}<\infty$, for any $\gamma>\frac12$.

  In addition, by estimates \eqref{eq:17thm3} and \eqref{eq:18thm3}, in the limit as $\epsilon$ tends to 0, we have
  \begin{equation}\label{eq:thm3}
    \sup_{t\in[0,1]}\left\|e^{\gamma|\cdot|^2}
    \widetilde u(\cdot,t)
    \right\|_{L^2}
    +
    \left\|\sqrt{t(1-t)}
    e^{\gamma|\cdot|^2}\nabla_{\widetilde A }\widetilde u(\cdot,t)
    \right\|_{L^2(\R^n\times[0,1])}=:N_\gamma<\infty.
  \end{equation}
 Now, let $R>8\mu\epsilon^{-\frac12}\|x^tB\|_{L^\infty}$, as in the statement of Lemma \ref{thm:carleman}, and
let $M >0$, to be chosen later. Then, localize the function
$\widetilde u$ as follows: let $\theta_M(x),\eta_R(t)$ be two smooth
functions such that
  \begin{equation*}
    \theta_M\equiv1\ \text{if }|x|\leq M
    \qquad
    \theta_M\equiv0\ \text{if }|x|\geq 2M
  \end{equation*}
  \begin{equation*}
    \eta_R(t)\equiv 1 \ \text{if }t\in\left[
    \frac1R,1-\frac1R\right]
    \qquad
    \eta_R(t)\equiv 0 \ \text{if }t\in\left[
    0,\frac1{2R}\right]\cup\left[1-\frac1{2R},1\right],
  \end{equation*}
  and define
  \begin{equation*}
    g(x,t)=\theta_M(x)\eta_R(t)\widetilde u(x,t).
  \end{equation*}
  It turns out that $g$ solves
  \begin{equation}\label{eq:equationggg}
    \left(\partial_t-i\Delta_{\widetilde A}\right)g
    =
    i\widetilde Vg
    +\theta_M\eta_R'\widetilde u
    -i\left(2\nabla\theta_M\cdot\nabla_{\widetilde A}\widetilde u
    +\widetilde u\Delta\theta_M\right)\eta_R.
  \end{equation}
Assume without loss of generality that the magnetic field $B$
satisfies the condition \eqref{eq:ortoghardy} with $v=e_1$.

  Now choose
  \begin{equation}\label{eq:primamu}
  \mu\leq\frac{\gamma}{1+\epsilon},
  \end{equation}
  for some fixed small $\epsilon>0$.
Notice that, in the support of the second term of the right-hand
side of \eqref{eq:equationggg}, we have
\begin{equation}\label{eq:blabla}
  \mu\left|x+Rt(1-t)e_1\right|^2-\frac{(1+\epsilon)R^2t(1-t)}{16\mu} \leq \gamma|x|^2+\frac{\gamma}{\epsilon};
\end{equation}
analogously, in the support of the last term of the right-hand side of \eqref{eq:equationggg} we have
\begin{equation}\label{eq:blablabla}
  \mu\left|x+Rt(1-t)e_1\right|^2-\frac{(1+\epsilon)R^2t(1-t)}{16\mu} \leq \gamma|x|^2+\frac{\gamma R^2}{\epsilon}.
\end{equation}
By condition \eqref{eq:ortoghardy} with $v=e_1$,
  \eqref{eq:cronstrom3}, \eqref{eq:gaugefinal}, identity \eqref{eq:nuova1} with $\epsilon=0$ and the fact that $B$ is anti-symmetric, we get $x\cdot\partial_t\widetilde A\equiv0\equiv e_1\cdot \partial_t\widetilde A$.
  Hence, applying \eqref{eq:carleman} to $g$, by \eqref{eq:equationggg}, \eqref{eq:blabla}, \eqref{eq:blablabla} and the bounds for $\theta_M,\eta_R$ and their derivatives we easily get
  \begin{align}
  \label{eq:quasilast}
    &
    R\left\|e^{\mu\left|x+Rt(1-t)v\right|^2
    -\frac{(1+\epsilon)R^2t(1-t)}{16\mu}}g\right\|
    _{L^2\left(\R^n\times[0,1]\right)}
    \\
    \nonumber
    &
    \leq
    N_{\epsilon,\mu}\left\|\widetilde V\right\|_{L^\infty(\R^n\times[0,1])}
    \left\|e^{\mu\left|x+Rt(1-t)v\right|^2
    -\frac{(1+\epsilon)R^2t(1-t)}{16\mu}}g\right\|
    _{L^2\left(\R^n\times[0,1]\right)}
    \\
    \nonumber
    &
    +N_{\epsilon,\mu} Re^{\frac\gamma\epsilon}
    \sup_{[0,1]}\left\|
    e^{\gamma|\cdot|^2}\widetilde u(\cdot,t)\right\|
    _{L^2}
    +N_\epsilon M^{-1}
    e^{\frac{\gamma R^2}{\epsilon}}
    \left\|
    e^{\gamma|x|^2}\left(|\widetilde u|
    +|\nabla_{\widetilde A}\widetilde u|\right)\right\|
    _{L^2\left(\R^n\times\left[\frac{1}{2R},
    1-\frac{1}{2R}\right]\right)},
  \end{align}
  with $N_{\epsilon,\mu}=4\sqrt{\mu/\epsilon}$.
  Notice that, choosing $R\geq2N_\epsilon\left\|\widetilde V\right\|_{L^\infty(\R^n\times[0,1])}$, the first term in the right-hand side of the last inequality can be hidden in the left-hand side. Moreover, by \eqref{eq:thm3}, we have that
  \begin{equation}\label{eq:tails}
    \lim_{M\to\infty}
    N_\epsilon M^{-1}
    e^{\frac{\gamma R^2}{\epsilon}}
    \left\|
    e^{\gamma|x|^2}\left(|\widetilde u|
    +|\nabla_{\widetilde A}\widetilde u|\right)\right\|
    _{L^2\left(\R^n\times\left[\frac{1}{2R},
    1-\frac{1}{2R}\right]\right)}
    =0,
  \end{equation}
  for any fixed $R$. Finally, choose
  \begin{equation*}
    M:=f(\epsilon)\frac R8,
  \end{equation*}
  for some positive function $f(\epsilon)$ such that $f(\epsilon)<1-\epsilon^2$ and $f(\epsilon)\to0$ as $\epsilon$ tends to 0.
  Notice that $g\equiv\widetilde u$ in $B_{f(\epsilon)R/8\times[(1-\epsilon)/2,(1+\epsilon)/2]}$; in this set, one can easily estimate
   \begin{align*}
    &
    \mu\left|x+Rt(1-t)v\right|^2
    -\frac{(1+\epsilon)R^2t(1-t)}{16\mu}
    \\
    & \ \ \ 
    \geq
    \frac{R^2}{16\mu}
    \left\{\mu^2\left[(1-\epsilon^2)^2-(1-\epsilon^2)f(\epsilon)\right]-\frac14(1+\epsilon)\right\}.
  \end{align*}Consequently, choosing
\begin{equation}\label{eq:secondamu}
  \mu^2>\frac14\cdot\frac{1+\epsilon}{(1-\epsilon^2)^2-(1-\epsilon^2)f(\epsilon)}
\end{equation}
one obtains that
 \begin{equation*}
    \mu\left|x+Rt(1-t)v\right|^2
    -\frac{(1+\epsilon)R^2t(1-t)}{16\mu}
    \geq
    0
  \end{equation*}
  in $B_{f(\epsilon)R/8\times[(1-\epsilon)/2,(1+\epsilon)/2]}$, in which we also have $g\equiv\widetilde u$.
  Comparing \eqref{eq:primamu} and \eqref{eq:secondamu}, we see that they are compatible if and only if $\gamma>\frac12$, i.e. $\alpha\beta>2$, as required in the statement of Theorem \ref{thm:hardy}.

  Therefore, by \eqref{eq:quasilast}, and the above considerations, there exist $C(\gamma,\epsilon), N_{\gamma,\epsilon}>0$ such that
  \begin{equation}\label{eq:penultima}
    Re^{C(\gamma,\epsilon) R^2}\left\|\widetilde
    u(x,t)\right\|_{L^2\left(B_{\frac R8}\times
    \left[\frac{1-\epsilon}{2},\frac{1+\epsilon}{2}
    \right]\right)}
    \leq
    N_{\gamma,\epsilon}R,
  \end{equation}
  for any $R>\max\{8\mu\epsilon^{-\frac12}\|x^tB\|_{L^\infty},
  \ 2N_\epsilon\|\widetilde V\|_{L^\infty(\R^n\times[0,1])}\}$.
  By \eqref{eq:m} in Lemma \ref{lem:C1C2}, \eqref{eq:thm3} and \eqref{eq:penultima} we now conclude that there exists a constant $N=N(\gamma,\epsilon,V)$ depending on $N_\gamma$, $\epsilon$ and $\sup_{[0,1]}\|V\|_{L^\infty}$ such that
  \begin{equation*}
    e^{C(\gamma,\epsilon)R^2}\left\|\widetilde
    u(\cdot,0)\right\|_{L^2}
    \leq
    N(\gamma,\epsilon,V).
  \end{equation*}
  Letting $R$ tend to infinity, this implies that $\widetilde u\equiv u\equiv0$.
\end{proof}

\end{document}